\definecolor{linkcolour}{rgb}{0,0.2,0.6}
\definecolor{Gray}{gray}{0.85}
\definecolor{LightCyan}{rgb}{0.88,1,1}
\newtheorem{theorem}{Theorem}[section]
\newtheorem{proposition}[theorem]{Proposition}
\theoremstyle{definition}
\newtheorem{definition}[theorem]{Definition}
\newtheorem{convention}[theorem]{Convention}
\newtheorem{example}[theorem]{Example}
\theoremstyle{remark}
\newtheorem{remark}[theorem]{Remark}
\numberwithin{equation}{section}
\newcommand\dcap{\mathrel{\ooalign{\rotatebox[origin=c]{-90}{$\longrightarrow$}\cr\kern0.4ex\hbox{$\not$}}}}
\begin{document}

\large

\title{Category Theory for Genetics}

\author{R\'{e}my Tuy\'{e}ras}
\address{M.I.T., Department of Mathematics, 77 Massachusetts avenue, Cambridge, MA 02139}
\curraddr{}
\email{rtuyeras@mit.edu}
\thanks{This research was supported by the AFOSR grant, \emph{Categorical approach to agent interaction}, FA9550-14-1-0031 and the AFOSR grant, \emph{Pixel matrices and other compositional analyses of interconnected systems}, FA9550-17-1-0058.}

\date{}

\dedicatory{}

\begin{abstract}
We introduce a categorical language in which it is possible to talk about DNA sequencing, alignment methods, CRISPR, homologous recombination, haplotypes, and genetic linkage. This language takes the form of a class of limit-sketches whose categories of models can model different concepts of biology depending on what their categories of values are. We discuss examples of models in the category of sets and in the category of semimodules over the Boolean semi-ring $\{0,1\}$. We identify a subclass of models in sets that models the genetic material of living beings and another subclass of models in semimodules that models haplotypes. We show how the two classes are related via a universal property/construction.
\end{abstract}

\maketitle

\section{Introduction}

\subsection{Short presentation}
The goal of the present article is to define a type of algebraic structure in which it is possible to \emph{do genetics}. Even though the proposed structures are completely algebraic, we will see that they also enable us to talk about well-known statistical tools, such as the mapping functions used in genetic linkage \cite{ZieglerKonig,Speed_GMF,Haldane}, which map the recombination frequency of two chromosomal regions as a function of their distance on the chromosome and which usually take the form of cumulative distribution functions. The present paper should therefore be seen as an effort to clarify the tools of genetics through algebra and, more specifically, category theory, rather than a work that only restricts itself on describing the algebraic aspects of genetics. 

\subsection{Motivations}
Our objective is to construct a bridge between two completely disconnected domains of science, specifically genetics and category theory, through a series of papers. While genetics is well-known for its complexity, category theory is recognized for its clarity and expressive power \cite{SpivakBook,BrownPorter,MacLane}. The goal of the present program would be to reach a level of abstraction that would allow one to tackle questions whose formulation are too complicated to be addressed with the current tools.

The language of the present paper is rather mathematical, but the results and definitions that it contains always try to capture the biological reality. Note that, in the paper, some terms might be used in a biological sense while others might be used in a mathematical one -- this will usually be specified. For instance, the sentence ``a structure in which it is possible to do genetics'' means that we want to define a formal language rather than a model of some particular living body. The need for such an abstraction, in biology, has, for example, been recognized in \cite{Lazebnik}. 

Attempts at linking genetics (or in fact molecular biology) to a categorical thinking are not new. A first example is \cite{Japanese_work}, in which a category-like formalism is used to discuss the algebraic properties of ``DNA wallpapers''. Another work is \cite{slice_bio}, in which Carbone \& Gromov model DNA, RNA and proteins by using topological objects such as surfaces and moduli spaces. On the other hand, the program proposed herein instead tries to understand the mechanisms of genetics in themselves by forgetting the spacial aspect and focusing on the biological operations occurring in the body. Such an algebraic approach has already been discussed, from the point of view of neuroscience, in several unpublished works by Ehresmann (for example, see \cite{A_Ehresmann}) via the concepts of \emph{limit} and \emph{cone}. The present paper takes a step further, in the context of genetics, by providing a precise `limit theory' (in fact, a \emph{limit sketch}) that can be used to formalize precise concepts of genetics.
In this respect, our structures will define formal environments in which one wants to express a problem and say things about its solution (see the discussion of section \ref{ssec:overall_strategy}).

In addition of offering a formalism, the proposed program aims to tackle technical and/or conceptual problems of various parts of genetics. 

For example, phylogenetics have been recognized to need more clarity in order to be endowed with more satisfactory computerized procedures \cite{MorrisonWhy} and the set of possible operations that can underlie alignment methods \cite[Chap. 1]{Rosenberg} remains to be organized in the form of a theoretical framework, which no-one seems to have produced yet \cite{MorrisonFramework}. In this paper, we define a class of limit-sketches that can model the most basic operations of alignment methods, such as insertion of gaps, cutting of DNA patches, concatenation of DNA patches and homologous recombination. More complex operations, such as duplications, transpositions, inversions, deletion, insertion and substitution mutations would then need to be expressed at the level of the models for this theory (see Examples \ref{exa:duplication_iso_pedigrad}; \ref{exa:Transcription_in_set}; \ref{exa:Mutations_in_set_are_spans}; \ref{exa:Mutations_presentable_morphisms} \& \ref{exa:Transcription_translation_presentable_morphisms}). This structural hierarchization, which pertains to the language of category theory, goes in direction of the program suggested in \cite{MorrisonFramework,MorrisonWhy} by trying to ``[recognize] mechanisms rather than assuming that all the variation occurs at random'' \cite[page 156, right col., l. 5]{MorrisonFramework}. 
For instance, distinguishing homologous recombination from mutations by setting them at different levels (namely, that of the theory and that of the models, respectively) translates the fact that homologous recombination is more of a systematic event pertaining to the biological reality while mutations are more of a set of possible events pertaining to Evolution \cite[Chap. 3]{Mount}.

Another example is in genetic linkage, where the current form of mapping functions do not quite fit recombination models, mainly because they do not manage to model cross-over interference \cite[page 3, right col.]{Speed_GMF}. It is also suggested \cite[\emph{Loc. cit.}]{Speed_GMF} that the measuring of cross-over interference should be done between potentially-separated DNA patches rather than necessarily-adjacent ones. In this article, we will be able to specify such disconnected interference relations via the cones of our limit-sketches (see section \ref{Toward_more_formalism}). In section \ref{Genetic_linkage_and_mapping_functions}, we will also see how these cones allow us to specify probability spaces of recombination events so that we will be able to `recover' Haldane's mapping functions \cite{Haldane}. We will conclude that the flexibility of our language makes it a good candidate for providing a framework in which it is possible to talk about multi-locus genetic linkage.

Finally, it is interesting to note that, both the hierachization of biological operations (such as the distinction between homologous recombination and mutations) and the specification of topologies (indicating where the recombination events occur) are shown to be parameters that can significantly determine the shape of evolutionary trees \cite{ArenasPosada}. It is even concluded that methods being able to manage the space of evolutionary trees resulting from the space of these parameters is very much needed. We would here suggest that such a scheme would first require a formalization of the latter. In this article, we choose to formalize this space of parameters in terms of limit-sketches and their models.


\subsection{Overall strategy}\label{ssec:overall_strategy}
The goal of the overall project, which will consist of a series of papers, is to unify various domains of genetics, molecular biology and chemistry within a same language in order to clarify new concepts, whose complexity could tend to increase due to the incoming of large amount of data, as well as to facilitate the dissemination of knowledge between researchers. Such a unification has been shown to be important \cite{Vidal} for the reason that ``increasingly sophisticated modeling concepts remain to be developed before the promise of systems biology can be fully realized'' (see \cite[section 4]{Vidal}). 

To formalize biology, one first needs to understand what the components that constitute the biological knowledge are. Very broadly, we could here claim that biological knowledge can be represented as a pair of two components: the method $\Omega$ (or the logic) and the observation $P$ (or the analysis).

For instance, it is not rare to see different research models using the same method, but different observations. Here is an easy example: it may happen that different researchers are interested to \texttt{read} or \texttt{ignore} certain patches of a sequence of molecules -- this is the method -- but their observations are different because their respective sequences are made of different building blocks; e.g. DNA nucleotides, RNA nucleotides, methylated nucleotides, amino acids, codons, alleles, etc. Thus, each analysis would provide different objects, say $(\Omega,P_1)$, $(\Omega,P_2)$, and $(\Omega,P_3)$, whose methods are all equal. If these researchers wanted to compare or unify their models, the idea would be that they could do so by using some sort of `pushout' construction along the method $\Omega$ to construct a new model $(\Omega, P_{123})$ in which all of their models could be discussed at the same time.
\[
\xymatrix@-15pt{
(\Omega,\emptyset)\ar[rr]\ar[rd]\ar[dd]&&(\Omega,P_1)\ar@{..>}[dd]\\
&(\Omega,P_2)\ar@{..>}[rd]&\\
(\Omega,P_3)\ar@{..>}[rr]&&(\Omega, P_{123})
}
\]
On the other hand, if different researchers had different methods and different analyses, it is likely that their models can still be related in one way or another via sub-models. Understanding how each of their models can be unified within a same model would require one to study the diagram of relations existing between them.
\[
\xymatrix@-15pt{
(\Omega_1,P_1)&&(\Omega_2,P_2)&&(\Omega_3,P_3)\\
&(\Omega_{12},P_{12})\ar[ru]\ar[lu]&&(\Omega_{23},P_{23})\ar[ru]\ar[lu]&
}
\]
Again, the idea would be that a `pushout' construction of such diagram would give a better picture of the studied system. Category theory is ideal to talk about relations between objects and this is the reason why this language seems to be the best candidate to work towards the unification of biology.

Of course, before being able to express oneself in the previous terms, one needs to formalize the idea of pair $(\Omega,P)$. We will start by doing so, in the present article, from section \ref{ssec:Recombination_schemes_and_pedigrads} via the concept of \emph{recombination scheme}, for which $\Omega$ is a pre-ordered set and $P$ is a functor preserving certain limits. Here, the term `scheme' is not neutral as it refers to somewhat similar pairs $(U,\mathcal{F})$ used in algebraic geometery to study algebraic varieties. While future work will aim to further develop the language of pairs $(\Omega,P)$, the present article mainly focus on introducing the language in a way that should be accessible to any biologist who knows the basics of category theory. Certain propositions given at the end of section \ref{ssec:Recombination_schemes_and_pedigrads} are slightly involved, but the paper works toward making their statements and proofs as accessible as possible.

\subsection{Road map and results}
The goal of the present paper is to define a class of theories, called \emph{chromologies}, whose models, called \emph{pedigrads}, can recover various aspects of genetics. We will start by defining chromologies in section \ref{sec:Chromologies_and_Pedigrads}, from section \ref{sec:pre-ordered_sets} to section \ref{sec:chromologies}, while the models (pedigrads) for these theories will be defined in sections \ref{ssec:Logical_systems} \& \ref{ssec:Pedigrads}. Intuitively, chromologoies will allow us to do all sorts of basic DNA manipulations such as DNA sequencing, alignment methods, CRISPR \cite{Pennisi} and homologous recombination whereas the pedigrads will allow us to give a context to these operations (which can be handled differently depending on the environment in which they are processed).

In section \ref{sec:Examples_of_pedigrads_in_sets}, we define a class of canonical pedigrads taking values in the category of sets. The images of their underlying functors will be seen as sets containing DNA sequences. These canonical pedigrads will be used in section \ref{sec:Pedigrads_in_semimodules_over_semi-rings} to generate new pedigrads in a category of semimodules over a particular semi-ring. The examples and illustrations given in section \ref{sec:Examples_of_pedigrads_in_sets} will be important to understand the content of section \ref{sec:Pedigrads_in_semimodules_over_semi-rings}.

In section \ref{sec:Pedigrads_in_semimodules_over_semi-rings}, we will construct a class of canonical pedigrads (see Definition \ref{def:Canonical_pedigrads_in_semimodules} and Theorem \ref{theo:representable_pedigrad_E_b_varepsilon}) in the category of semimodules over the Boolean semi-ring $B_2=\{0,1\}$. The elements belonging to the images of their underlying functors can be seen as theoretical `haplotypes'. Then, in sections \ref{ssec:presentable_functors} \& \ref{ssec:presentable_pedigrads}, we will define a larger class of pedigrads, called \emph{presentable pedigrads}, for which our class of canonical pedigrads will satisfy a universal property (see Theorem \ref{theo:universal_property}). Presentable pedigrads will be equipped with a type of morphism that can model polymorphic DNA mutations (see Example \ref{exa:Mutations_presentable_morphisms}) as well as the usual transcription operations (see Example \ref{exa:Transcription_translation_presentable_morphisms}). We will see that the expressive power of semimodules, and, more specifically, the equations that they satisfy, will allow us to express biological phenomena such as nullomers and other selective behaviors resulting from RNA translation (see Example \ref{exa:More_quotients}).

Finally, in section \ref{Genetic_linkage_and_mapping_functions}, we will see how it is possible to recover the `mapping functions' \cite{Haldane} expressing the genetic distance between two markers on a given chromosome. Section \ref{Toward_more_formalism} will suggest various ways of refining the obtained mapping functions.

\subsection{Acknowledgments}
I would like to thank David Spivak and Eric Neumann for very useful discussions, remarks and questions regarding the content of this paper.

\section{Chromologies and Pedigrads}\label{sec:Chromologies_and_Pedigrads}

The goal of this section is to introduce a set of theories whose models try to capture the logic of genetics. To justify why our theories look the way they do, we need to recall a few facts regarding the construction of theories in general. First, recall that, classically, models for theories are defined as sets equipped with some operations. For instance, a \emph{ring} is a set $R$ equipped with two operations $\cdot:R \times R \to R$ and $+:R \times R \to R$ making certain diagrams commute.

More categorically, rings are also product-preserving functors from a certain  product sketch\footnote{A small category equipped with a subset of its wide spans.} $\mathtt{Ring}$ (the theory) to the category $\mathbf{Set}$ of sets and functions \cite{Ehresmann}. This functorial point of view was introduced by Lawvere \cite{Lawvere1963} in 1963 via the concept of what is now called a \emph{Lawvere theory} -- the theory $\mathtt{Ring}$ being an example. The advantage of functors over sets equipped with functions is that functors allow us to clearly distinguish between what is intrinsically true in a model (via the theory) and what can occasionally be true in the model (via the images of the functor). Then, the formalism accompanying the language of functors allows us to more carefully think about the mechanisms governing the models.

Since Lawvere theories were meant to capture the logic of algebraic structures equipped with multivariate functions, their objects were taken to be the set of natural numbers in order to specify the arities of the functions. Along those lines, since the goal of the present section is to define a theory that captures the logic of genetics and whose operations take DNA patches as inputs, the objects of our theory will look like DNA segments. 
Note that, while, in rings, one \emph{adds} and \emph{multiplies} terms together, in genetics, one \emph{cuts}, \emph{aligns} and \emph{recombines} DNA strands together. Therefore, our theory will be based on these operations.

For illustration, an integer object in a Lawvere theory can easily be represented as a finite sequence of atoms; e.g the object 6 would be represented by six atoms as follows.
\begin{equation}\label{eq:Lawvere_integers}
6 = \xymatrix@C-30pt{(\bullet&\bullet&\bullet&\bullet&\bullet&\bullet)}
\end{equation}
These atoms can make it easier to see how the functors (models) defined on the Lawvere theory send the integer objects to the product objects of the models; e.g. for a functor $R$, the image $R(6)$ would be sent to a product of the following form.
\[
R(\bullet) \times R(\bullet) \times R(\bullet) \times R(\bullet) \times R(\bullet) \times R(\bullet)
\]
In the case of DNA, the idea is to copy the previous picture, but by adding enough information to be able to do genetics. If one looks at the type of pictures drawn by biologists to explain homologous recombination, alignment methods or even gene linkage, one can often see pictures of chromosomal patches subdivided in terms of selected and masked regions, as shown below.
\[
\includegraphics[height=5cm]{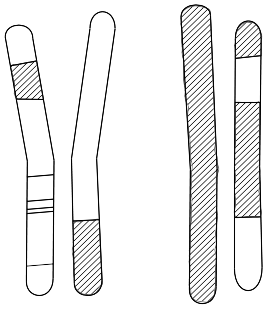}
\]
These colored regions are obviously reminiscent of the term \emph{chromo-some}\footnote{meaning \emph{color-body}} itself. The regional separations are also reminiscent of some sort of topology -- or metric. If one tries to merge these topological and colored components with the type of atomic representation given in (\ref{eq:Lawvere_integers}), we are likely to end up with the following type of picture.
\begin{equation}\label{eq:intro_representation_patch}
\xymatrix@C-30pt{
(\bullet&\bullet&\bullet)&(\circ&\circ)&(\bullet&\bullet&\bullet&\bullet)&(\bullet&\bullet&\bullet&\bullet&\bullet)&(\circ&\circ&\circ)&(\circ)
}
\end{equation}
In picture (\ref{eq:intro_representation_patch}), the black nodes could indicate the regions of the chromosome that one wants to use while the white nodes could indicate the parts of the chromosome that one wants to ignore (mask). For convenience, in the examples and illustrations of this article, we will only consider two colors (black \& white), but the theory will work for more than two colors. Our set of colors will be encoded by pre-ordered sets, whose semantics will allow us to  \emph{select} and \emph{cut}.

\subsection{Pre-ordered sets}\label{sec:pre-ordered_sets}
In this paper, the most basic notions of ordered set are expected to be known by the reader (e.g. partially ordered sets; totally (or linearly) ordered sets; pre-ordered sets; see \cite[Page 11]{MacLane}). However, because pre-orders play an important role later on, it was felt appropriate to recall their definition here. 

A \emph{pre-ordered set} consists of a set $\Omega$ and a binary relation $\leq$ on $\Omega$ satisfying the following logical implications.
\begin{itemize}
\item[1)] (reflexivity) for every $x \in \Omega$, the relation $x \leq x$ holds;
\item[3)] (transitivity) for every $x,y,z \in \Omega$, if $x \leq y$ and $y \leq z$ hold, then so does $x \leq z$.
\end{itemize}

\begin{example}\label{exa:pre-ordered_set_0_and_1}
The set $\{0,1\}$ is a pre-ordered set if one sets $0 \leq 1$; $0 \leq 0$ and $1 \leq 1$. The resulting pre-ordered set is usually known as the \emph{Boolean} pre-ordered set and the values 0 and 1 are usually denoted as $\mathtt{false}$ and $\mathtt{true}$, respectively.
\end{example}

\begin{remark}[Representation]
Pre-ordered sets may happen to be sets of labels (or even sets of structures) instead of being sets of integers. In the case of the Boolean pre-ordered set given in Example \ref{exa:pre-ordered_set_0_and_1}, the labels $\mathtt{false}$ and $\mathtt{true}$ will sometimes be used instead of the integers $0$ and $1$, mainly for the sake of clarity.
\end{remark}

\begin{example}
The set $\{0,1\}$ could also be equipped with the discrete pre-order made of the reflexive relations $0 \leq 0$ and $1 \leq 1$ only.
\end{example}

\begin{example}\label{exa:product_pre-order_set_0_1}
For every integer $n$, the $n$-fold Cartesian product $\{0,1\}^{\times n}$ of the pre-ordered set given in Example \ref{exa:pre-ordered_set_0_and_1} is equipped with a pre-order relation $\leq$ that compares two tuples in $\{0,1\}^{\times n}$, say of the form $(x_1,\dots,x_n) \leq (y_1,\dots,y_n)$, if, and only if, the relation $x_i \leq y_i$ holds for every $1 \leq i \leq n$.
\end{example}

\begin{example}
The interval $[0,1]$ is a pre-ordered set for the usual pre-order ``being less than or equal to'' defined on the set $\mathbb{R}$ of real numbers.
\end{example}

\begin{remark}[Pre-order categories]
A pre-ordered set is equivalently a category in which there exists at most one arrow between every pair of objects. In the sequel, a pre-ordered set will sometimes be called a \emph{pre-order category} to emphasize its categorical nature.
\end{remark}

\subsection{Finite sets of integers}
For every positive integer $n$, we will denote by $[n]$ the finite set of integers $\{1,2,\dots,n\}$. We will also let $[0]$ denote the empty set. In the sequel, for every non-negative integer $n$, the set $[n]$ will implicitly be equipped with the order associated with the set of integers (note that the restriction of this order on $[0]$ is the empty order).

\subsection{Segments}
Let $(\Omega,\preceq)$ denote a pre-ordered set. 
A \emph{segment} over $\Omega$ consists of a pair of non-negative integers $(n_1,n_0)$, an order-preserving\footnote{for every relation $x \leq y$ in the domain, the relation $t(x) \leq t(y)$ holds in the codomain.} surjection $t:[n_1] \to [n_0]$ and a function $c:[n_0] \to \Omega$.

\begin{remark}[Representation]
Segments have all the necessary data to encode the type of pictures given in (\ref{eq:intro_representation_patch}). For a segment $(t,c)$ as defined above, the finite set $[n_1]$ represents the range of elements composing the segment
\[
n_1 = \xymatrix@C-30pt{(\bullet&\bullet &\, \cdots\,  & \bullet)}
\]
while the fibers $t^{-1}(1), \dots, t^{-1}(n_0)$ of the surjection $t:[n_1] \to [n_0]$ gather these elements into patches (see the brackets below).
\[
t = \xymatrix@C-30pt{(\bullet&\bullet&\bullet)&(\bullet&\bullet&\bullet&\bullet)&(\bullet&\bullet&\,\cdots\, &\bullet)&(\bullet& \bullet)}
\]
Finally, the different colors associated with the patches of the segment are specified by the map $c:[n_0] \to \Omega$. For instance, if we take $\Omega$ to be the Boolean pre-ordered set $\{\mathtt{false}\leq \mathtt{true}\}$ of Example \ref{exa:pre-ordered_set_0_and_1} and we choose to associate the white color with the $\mathtt{false}$ value and the black color with the $\mathtt{true}$ value, then an identity of the form $c(1) = \mathtt{false}$ will be represented by coloring all the elements of $[n_1]$ living in the fiber $t^{-1}(1)$ in white.
\[
(t,c) = \xymatrix@C-30pt{(\circ&\circ&\circ)&(\bullet&\bullet&\bullet&\bullet)&(\bullet&\bullet&\,\cdots\, &\bullet)&(\bullet& \bullet)}
\]
Note that if $\Omega$ contains more elements, then we need to use more colors (which can also be represented by numbers). These colors could also mean all sorts of things, including actions such as \texttt{ignore}, \texttt{read}, \texttt{start reading}, \texttt{stop reading}, \texttt{misread} (or \texttt{mutate}). The pre-order on the colors would then specify semantic priorities between the different tasks or functions associated with the colors (see below).
\smallskip

\begin{center}
\begin{tabular}{|c|c|c|}
\hline
\cellcolor[gray]{0.8}2 colors & \cellcolor[gray]{0.8}4 colors & \cellcolor[gray]{0.8}5 colors\\
\hline
$\{0,1\}$ & $\{0,1,2,3\}$ & $\{0,1,2,3,4\}$\\
\hline
{$\xymatrix@-15pt{
\fbox{\texttt{read}}\\
\\
\\
\fbox{\texttt{ignore}}\ar[uuu]}$}
&
{$\xymatrix@-15pt{
&\fbox{\texttt{read}}&\\
\fbox{\texttt{start}}\ar[ru]&&\fbox{\texttt{finish}}\ar[lu]\\
&\fbox{\texttt{ignore}}\ar[ur]\ar[ul]&}$}
&
{$\xymatrix@-15pt{
&\fbox{\texttt{read}}&\\
\fbox{\texttt{start}}\ar[ru]&\fbox{\texttt{misread}}\ar[u]&\fbox{\texttt{stop}}\ar[lu]\\
&\fbox{\texttt{ignore}}\ar[u]\ar[ur]\ar[ul]&}$}\\
\hline
\end{tabular}
\end{center}
\end{remark}

\begin{remark}[Notations]
Note that the specification of the data $n_1$ and $n_0$ is redundant with the data of the function $t$ and $c$. Later on, a segment will often be denoted as a pair $(t,c)$ and, every so often, as an arrow $(t,c):[n_1] \multimap [n_0]$.
\end{remark}

\begin{convention}[Domains, topologies \& types]
For every segment $(t,c):[n_1] \multimap [n_0]$, the data $[n_1]$ will be called the \emph{domain} of $(t,c)$, the data $t$ will be called the \emph{topology} of $(t,c)$ and the data $(n_1,n_0)$ will be called the \emph{type} of $(t,c)$. The type of a segment will always be specified as an arrow of the form $[n_1] \multimap [n_0]$.
\end{convention}

\begin{definition}[Homologous segments]
Two segments $(t,c)$ and $(t',c')$ over $\Omega$ will be said to be \emph{homologous} if their topologies $t$ and $t'$ are equal.
\end{definition}

\begin{definition}[Quasi-homologous segments]
Two segments $(t,c)$ and $(t',c')$ over $\Omega$ will be said to be \emph{quasi-homologous} if their domains $[n_1]$ and $[n_1']$ are equal.
\end{definition}

\subsection{Morphisms of segments}\label{ssec:Morphisms_of_chromosomal_patches}
Let $(\Omega,\preceq)$ be a pre-ordered set and $(t,c):[n_1] \multimap [n_0]$ and $(t',c'):[n_1'] \multimap [n_0']$ be two segments over $\Omega$. A morphism of segments from $(t,c)$ to $(t',c')$ consists of
\begin{itemize}
\item[1)] an order-preserving injection $f_1:[n_1] \to [n_1']$;
\item[2)] an order-preserving function $f_0:[n_0] \to [n_0']$;
\end{itemize}
such that the inequality $c' \circ f_0(i) \preceq c(i)$ holds for every $i \in [n_0]$ and the following diagram commutes.
\[
\xymatrix{
[n_1]\ar@{->>}[r]^{t}\ar@{)->}[d]_{f_1}&[n_0]\ar[d]^{f_0}\\
[n_1']\ar@{->>}[r]^{t'}&[n_0']
}
\]
It is easy to check that the class of morphisms of segments over $\Omega$ is stable under component-wise compositions and admits identities on every segment. We will denote by $\mathbf{Seg}(\Omega)$ the category whose objects are segments over $\Omega$ and whose arrows are morphisms between these.

From now on, we will regard the notations $f_1$ and $f_0$ given above as a conventional notation for morphisms in $\mathbf{Seg}(\Omega)$. Below, we give several examples of typical morphism in $\mathbf{Seg}(\Omega)$ where $\Omega$ is taken to be the Boolean pre-ordered set of Example \ref{exa:pre-ordered_set_0_and_1}.

\begin{example}[Locality]
If both components $f_1$ and $f_0$ are identities, then the inequality $c' \circ f_0\preceq c$ `decreases' the colors of the segment as illustrated below, on the left.
\[
\begin{array}{c}
\xymatrix@C-30pt{
(\bullet&\bullet&\bullet)&(\bullet&\bullet)&(\bullet&\bullet&\bullet&\bullet)&(\bullet&\bullet&\bullet&\bullet&\bullet)&(\circ&\circ&\circ)&(\bullet)
}\\
\rotatebox[origin=c]{-90}{$\longrightarrow$} \\
\xymatrix@C-30pt{
(\circ&\circ&\circ)&(\circ&\circ)&(\bullet&\bullet&\bullet&\bullet)&(\bullet&\bullet&\bullet&\bullet&\bullet)&(\circ&\circ&\circ)&(\circ)
}
\end{array}
\quad\quad\quad\quad\quad
\begin{array}{c}
\xymatrix@C-30pt@R-11pt{
(\dots)&(\circ&\circ\ar[d]|{\xcancel{\quad\,\,}}&\circ)&(\dots)\\
(\dots)&(\bullet&\bullet&\bullet)&(\dots)
}
\end{array}
\]
\textit{Interpretation:} This type of morphism tells us that one is able to select/cut local patches from a segment. This is, for instance, the type of morphism that one may want to use to model CRISPR, namely separating a patch from a segment. Note that, because reading a segment (black color) has a higher semantic priority than ignoring it (white color), turning white regions into black ones, as shown above, on the right, is forbidden. The order relation on the colors can therefore be a way of encoding irreversible (or energy-releasing) events.
\end{example}

\begin{example}[Relativity]\label{exa:Metric_morphism}
If only the component $f_1$ is an identity morphism, then the component $f_0$ can merge the regions defining the topology.
\[
\begin{array}{c}
\xymatrix@C-30pt{
(\bullet&\bullet&\bullet)&(\circ&\circ)&(\bullet&\bullet&\bullet&\bullet)&(\bullet&\bullet&\bullet&\bullet&\bullet)&(\circ&\circ&\circ)&(\circ)
}\\
\rotatebox[origin=c]{-90}{$\longrightarrow$} \\
\xymatrix@C-30pt{
(\circ&\circ&\circ&\circ&\circ)&(\bullet&\bullet&\bullet&\bullet&\bullet&\bullet&\bullet&\bullet&\bullet)&(\circ&\circ&\circ&\circ)
}
\end{array}
\]
\textit{Interpretation:} This type of morphism implies that the way one parses the patches of a segment influences the way one parses the whole segment (e.g. from codons to genes). However, because there is no arrow that increases the number of brackets from its domain to its codomain, the way one parses a segment might not necessarily reflect the way the patches are parsed (e.g from gene to codons).
\end{example}

\begin{example}[Flexibility]\label{exa:Flexibility_morphism}
If the component $f_1$ is not an identity morphism, then the range of the segment increases. Below, we suppose that the identity $c' \circ f_0 = c$ holds.
\[
\begin{array}{r}
\xymatrix@C-30pt{
(\bullet&\bullet&\bullet)&(\circ&\circ)&(\bullet&\bullet&\bullet&\bullet)&(\bullet&\bullet&\bullet&\bullet&\bullet)&(\circ&\circ&\circ)&(\circ)
}\\
\multicolumn{1}{c}{\rotatebox[origin=c]{-90}{$\longrightarrow$}}\\
\xymatrix@C-30pt{
(\bullet&\bullet&\bullet&\bullet)&(\bullet)&(\circ&\circ&\circ&\circ)&(\bullet&\bullet&\bullet&\bullet)&(\bullet&\bullet&\bullet&\bullet&\bullet)&(\circ&\circ&\circ)&(\circ)
}
\end{array}
\]
\textit{Interpretation:} This type of morphism allows one to insert particular nucleobases or spaces in the parsing of a segment. 
For instance, spaces become necessary if one wants to recombine segments that are not necessarily (quasi-)homologous. A morphism inserting a space would then correspond to a choice of `sequence alignment' in bioinformatics (this will be illustrated in Example \ref{exa:morphisms_as_inclusion_of_words}).
\end{example}

\begin{remark}[Initial object]
For every pre-ordered set $\Omega$, the segment (over $\Omega$) of type $[0] \multimap [0]$ that is given by the obvious order-preserving surjection $!:\emptyset \to \emptyset$ and the canonical function $!:\emptyset \to \Omega$ is an initial object in $\mathbf{Seg}(\Omega)$. Note that such an object is formal and does not really possess any biological interpretation other than it can help us express the idea of `absence'.
\end{remark}

\subsection{Pre-orders on homologous segments}
Let $(\Omega,\preceq)$ be a pre-ordered set and let $t:[n_1] \to [n_0]$ be an order-preserving surjection. The subcategory of $\mathbf{Seg}(\Omega)$ whose objects are the homologous segments of topology $t$ and whose arrows are the morphisms of segments for which the components $f_0$ and $f_1$ are identities will be denoted by $\mathbf{Seg}(\Omega:t)$
and referred to as the \emph{category of homologous segments (over $\Omega$) of topology $t$}.

\begin{proposition}
For every order-preserving surjection $t:[n_1] \to [n_0]$, the category $\mathbf{Seg}(\Omega:t)$ is a pre-order category.
\end{proposition}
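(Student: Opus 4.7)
The plan is to unpack the definitions and observe that the required uniqueness of morphisms is essentially built into the definition of $\mathbf{Seg}(\Omega:t)$. Recall that a pre-order category is just a category in which there is at most one arrow between any ordered pair of objects, so the task reduces to counting morphisms between two arbitrary objects of $\mathbf{Seg}(\Omega:t)$.

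First, I would fix two objects $(t,c)$ and $(t,c')$ of $\mathbf{Seg}(\Omega:t)$; by definition these are homologous segments whose common topology is $t:[n_1]\to[n_0]$. A morphism between them in $\mathbf{Seg}(\Omega:t)$ is, by the definition of the subcategory, a morphism of segments $(f_1,f_0):(t,c)\to(t,c')$ in which both components $f_1$ and $f_0$ are identity maps. Since $f_1$ and $f_0$ are forced to equal $\mathrm{id}_{[n_1]}$ and $\mathrm{id}_{[n_0]}$ respectively, there is only one candidate pair $(f_1,f_0)$ for a morphism from $(t,c)$ to $(t,c')$, and hence at most one arrow between any two objects.

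Second, I would note that this candidate pair does define an honest morphism in $\mathbf{Seg}(\Omega)$ precisely when the conditions recalled in Section~\ref{ssec:Morphisms_of_chromosomal_patches} are met: the square with $t$ and $t'=t$ as rows commutes trivially (both rows being $t$ and both columns being identities), and the coloring condition $c'\circ f_0(i)\preceq c(i)$ reduces to the pointwise inequality $c'(i)\preceq c(i)$ for every $i\in[n_0]$. Thus the unique candidate either is a morphism (when $c'\preceq c$ pointwise) or is not, but in neither case is there more than one arrow.

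Combining the two observations yields that $\mathbf{Seg}(\Omega:t)$ has at most one arrow between any two objects, which is exactly the defining property of a pre-order category. There is no real obstacle to this proof; the only thing to be slightly careful about is not to confuse the induced pre-order on homologous segments (where $(t,c)\leq(t,c')$ iff $c'\preceq c$ pointwise) with the direction of the original order on $\Omega$, but this plays no role in establishing the pre-order category property itself.
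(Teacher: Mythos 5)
Your proof is correct and follows essentially the same route as the paper: both arguments unpack the definition of a morphism in $\mathbf{Seg}(\Omega:t)$, observe that the components $f_1,f_0$ are forced to be identities so that an arrow $(t,c)\to(t,c')$ exists precisely when $c'(i)\preceq c(i)$ for all $i\in[n_0]$, and conclude that there is at most one arrow between any two objects. The only cosmetic difference is that the paper phrases the conclusion as the induced relation being reflexive and transitive, whereas you verify the ``at most one arrow'' property directly; these amount to the same thing here.
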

\begin{proof}
The pre-order relations $(t,c) \leq (t,c')$ associated with $\mathbf{Seg}(\Omega:t)$ are induced by the following pre-order relations in $(\Omega,\preceq)$, which directly come from the definition of the arrows of $\mathbf{Seg}(\Omega:t)$.
\[
c'(i) \preceq c(i)\,,\quad\forall i \in [n_0]
\]
It is straightforward to see that this defines a reflexive and transitive relation.
\end{proof}

\subsection{Pre-orders on quasi-homologous segments}
Let $(\Omega,\preceq)$ be a pre-ordered set and let $n_1$ be a non-negative integer. The subcategory of $\mathbf{Seg}(\Omega)$ whose objects are the quasi-homologous segments of domain $[n_1]$ and whose arrows are the morphisms segments for which the component $f_1$ is an identity will be denoted by $\mathbf{Seg}(\Omega\,|\,n_1)$ and called the \emph{category of quasi-homologous segments (over $\Omega$) of domain $n_1$}.

\begin{proposition}\label{prop:quasi_homologous_preordered_category}
If there exists a morphism $(t,c) \to (t',c')$ in $\mathbf{Seg}(\Omega\,|\,n_1)$, then it is the only morphism of type $(t,c) \to (t',c')$ in $\mathbf{Seg}(\Omega)$.
\end{proposition}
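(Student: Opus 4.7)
My plan is to reduce the uniqueness assertion to two elementary observations: that an order-preserving injection of a finite totally ordered set is forced to be the identity, and that post-composition with a surjection is injective. Let the given morphism of $\mathbf{Seg}(\Omega\,|\,n_1)$ be $(\mathrm{id}_{[n_1]}, g_0)$, and let $(f_1,f_0)$ be an arbitrary morphism $(t,c)\to(t',c')$ in the larger category $\mathbf{Seg}(\Omega)$. My goal is to show $f_1=\mathrm{id}_{[n_1]}$ and $f_0=g_0$; the color-preservation inequality $c'\circ f_0\preceq c$ plays no role in uniqueness, it only governs whether the hom-set is nonempty.

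First I would use the fact that $(t,c)$ and $(t',c')$ are quasi-homologous of common domain $[n_1]$ to read $f_1$ as an order-preserving injection $[n_1]\to[n_1]$. Because $[n_1]$ is a finite set, any such injection is a bijection; because $[n_1]$ carries a total order, the only order-preserving self-bijection is the identity. Hence $f_1=\mathrm{id}_{[n_1]}$, so $(f_1,f_0)$ already lives in $\mathbf{Seg}(\Omega\,|\,n_1)$.

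Next, with $f_1=\mathrm{id}_{[n_1]}$ in hand, the commuting-square condition $t'\circ f_1=f_0\circ t$ simplifies to $f_0\circ t=t'$. Since $t$ is surjective, any function $f_0$ satisfying $f_0\circ t=t'$ is completely determined on each fibre by the formula $f_0(t(i))=t'(i)$ for $i\in[n_1]$. Applying this to the hypothesized morphism gives $g_0\circ t=t'$ as well, hence $f_0=g_0$.

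The ``hard part'' is really only the first observation, and it is not hard — it is essentially a one-line remark about totally ordered finite sets. Still, it is the conceptual crux: without the finite-totally-ordered structure of $[n_1]$, the quasi-homologous category would have many more endomorphisms on its domain component, and the proposition would fail. Once this point is isolated, the rest is a routine diagram chase powered by the surjectivity of the topology $t$.
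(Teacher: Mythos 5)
Your proof is correct and follows essentially the same route as the paper's: force the domain component to be the identity (the paper simply asserts that an order-preserving injection $[n_1]\to[n_1]$ is the identity, whereas you spell out the finiteness and total-order reasons), then cancel the surjection $t$ from $f_0\circ t = t' = g_0\circ t$ to conclude $f_0=g_0$. No gaps; the extra detail you supply for the first step is exactly the justification the paper leaves implicit.
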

\begin{proof}
Let $(\mathrm{id},f_0):(t,c) \to (t',c')$ be the morphism of the statement in $\mathbf{Seg}(\Omega\,|\,n_1)$ and let $(g_1,g_0):(t,c) \to (t',c')$ be another morphism in $\mathbf{Seg}(\Omega)$. Because $g_1$ is an order-preserving inclusion of type $[n_1] \to [n_1]$, it must be an identity, so that the identity $g_0 \circ t = t'$ holds. On the other hand, the identity $f_0 \circ t = t'$ also holds, which means that $g_0 \circ t = f_0 \circ t$. Because $t$ is an epimorphism, the identity $g_0=f_0$ must hold.
\end{proof}

\begin{remark}[Pre-order category]
Proposition \ref{prop:quasi_homologous_preordered_category} implies that the category $\mathbf{Seg}(\Omega\,|\,n_1)$ of quasi-homologous segments is a pre-order category.
\end{remark}

\begin{remark}[Zero domain]
The category $\mathbf{Seg}(\Omega\,|\,0)$ of quasi-homologous segments of empty domain is a terminal category whose only object is the initial object of $\mathbf{Seg}(\Omega)$.
\end{remark}

\subsection{Cones}\label{ssec:cones}
Recall that a \emph{cone} in a category $\mathcal{C}$ consists of an object $X$ in $\mathcal{C}$, a small category $A$, a functor $U:A \to \mathcal{C}$ and a natural transfomation $\Delta_{A}(X) \Rightarrow U$ where $\Delta_{A}(X)$ denotes the constant functor $A \to \mathbf{1} \to \mathcal{C}$ mapping every object in $A$ to the object $X$ in $\mathcal{C}$.

\subsection{Chromologies}\label{sec:chromologies}
A \emph{chromology} is a pre-ordered set $(\Omega,\preceq)$ that is equipped, for every non-negative integer $n$, with a set $D[n]$ of cones in the category $\mathbf{Seg}(\Omega\,|\,n)$. A chromology as above will later be denoted as a pair $(\Omega,D)$.

\begin{remark}[Future examples]
In section \ref{ssec:Distributive_and_exactly_distributive_chromologies}, we will see several examples of chromologies, which will be used throughout this article.
\end{remark}

\subsection{Logical systems}\label{ssec:Logical_systems}
We will speak of a \emph{logical system} to refer to a category $\mathcal{C}$ that is equipped with a subclass of its cones $\mathcal{W}$ (see section \ref{ssec:cones}).

\begin{remark}[Limit sketch]
The difference between a logical system and a limit sketch is that the latter is  defined as a small category that is equipped with a subset of its cones. A logical system is also meant to be the codomain of a functor whose domain is a limit sketch.
\end{remark}

\subsection{Pedigrads} \label{ssec:Pedigrads}
Pedigrads are algebraic structures that model the logical rules of chromologies.  Their name refers to the concept of `pedigree' used in genetics. Let $(\Omega,D)$ be a chromology and $(\mathcal{C},\mathcal{W})$ be a logical system. A \emph{pedigrad} in $(\mathcal{C},\mathcal{W})$ is a functor $\mathbf{Seg}(\Omega) \to \mathcal{C}$ sending, for every non-negative integer $n$, the cones in $D[n]$ to cones in $\mathcal{W}$. 

\begin{convention}[$\mathcal{W}$-pedigrads]
Because we will often consider the same category $\mathcal{C}$ for different classes of cones $\mathcal{W}$, we will often refer to a pedigrad in $(\mathcal{C},\mathcal{W})$ as a $\mathcal{W}$-pedigrad.
\end{convention}

\subsection{Morphisms of pedigrads} \label{ssec:Morphisms_of_pedigrads}
Recall that, for every pair of categories $\mathcal{C}$ and $\mathcal{D}$, the notation $[\mathcal{C},\mathcal{D}]$ denotes the category whose objects are functors $\mathcal{C} \to \mathcal{D}$ and whose arrows are natural transformations in $\mathcal{D}$ over $\mathcal{C}$.
Let $(\Omega, D)$ be a chromology and $(\mathcal{C},\mathcal{W})$ be a logical system. A \emph{morphism of pedigrads} from a pedigrad $A:\mathbf{Seg}(\Omega)\to\mathcal{C}$ in $(\mathcal{C},\mathcal{W})$ for $(\Omega, D)$ to a pedigrad $B:\mathbf{Seg}(\Omega)\to\mathcal{C}$ in $(\mathcal{C},\mathcal{W})$ for $(\Omega, D)$ is an arrow  $A \Rightarrow B$ in the category $[\mathbf{Seg}(\Omega),\mathcal{C}]$. 

\begin{convention}[Category of pedigrads]
The full subcategory of $[\mathbf{Seg}(\Omega),\mathcal{C}]$ whose objects are the pedigrads in a logical system $(\mathcal{C},\mathcal{W})$ for a chromology $(\Omega,D)$ will be called \emph{the category of pedigrads in  $(\mathcal{C},\mathcal{W})$ for $(\Omega,D)$}.
\end{convention}

\section{Examples of pedigrads in sets}\label{sec:Examples_of_pedigrads_in_sets}
The goal of this section is to construct two canonical classes of pedigrads that take their values in the category $\mathbf{Set}$ of sets and functions (see Propositions \ref{prop:mono-pedigrad_in_set} \& \ref{prop:E_b_varepsilon_W_iso_pedigrad_exactly_distributive}). Throughout the section, we shall also let $(E,\varepsilon)$ be a fixed pointed set and $(\Omega,\preceq)$ be a pre-ordered set.

\subsection{Truncation functors}

In this section, we define a truncation operation, which will turn out to be very useful for constructing pedigrads.

\begin{definition}[Truncation]
For every segment $(t,c):[n_1] \multimap [n_0]$ over $\Omega$ and element $b \in \Omega$, we will denote by $\mathsf{Tr}_b(t,c)$ the subset $\{i \in [n_1]~|~ b \preceq c \circ t(i)\}$ of $[n_1]$. This is the set of all elements in $[n_1]$ whose images via $c \circ t$ is greater than or equal to $b$ in $\Omega$.
\end{definition}

\begin{example}[Truncation]
Let $(\Omega,\preceq)$ be the Boolean pre-ordered set $\{0 \leq 1\}$. If we consider a segment over $\Omega$, as given below, on the left, the operation $\mathsf{Tr}_b$ for which $b$ is taken to be equal to $1$ will select all the integers in the domain of $(t,c)$ that are associated with black nodes while the operation $\mathsf{Tr}_b$ for which $b$ is taken to be equal to $0$ will select all the integers in the domain of $(t,c)$.
\[
(t,c)=\xymatrix@C-30pt{
(\bullet&\bullet&\bullet)&(\circ&\circ)&(\bullet&\bullet&\bullet&\bullet)&(\circ&\circ&\circ&\circ&\circ)&(\bullet&\bullet&\bullet)&(\circ)
}
\quad\quad
\begin{array}{l}
\mathsf{Tr}_1(t,c)=\{1,2,3,6,7,8,9,15,16,17\}\\
\mathsf{Tr}_0(t,c)=[18]
\end{array}
\]
\end{example}

\begin{definition}[Sub-objects]
For every non-negative integer $n$, we will speak of a \emph{sub-object} of $[n]$ to refer to a subset of $[n]$. A \emph{morphism of sub-objects of $[n]$} is an inclusion of sets between the two sub-objects.
\end{definition}

\begin{example}[Truncation operations and sub-objects]
Let $(\Omega,\preceq)$ be the Boolean pre-ordered set $\{0 \leq 1\}$. If we consider the morphism of segments of Example \ref{exa:Metric_morphism}, which is recalled below, on the left, we can see that the truncation operation $\mathsf{Tr}_1$ gives, on the right, two sub-objects of the domain $[18]$ that we can relate via a morphism of sub-objects.
\[
\begin{array}{crc}
(t,c)=\xymatrix@C-30pt{
(\bullet&\bullet&\bullet)&(\circ&\circ)&(\bullet&\bullet&\bullet&\bullet)&(\bullet&\bullet&\bullet&\bullet&\bullet)&(\circ&\circ&\circ)&(\circ)
}&&\mathsf{Tr}_1(t,c)=\{1,2,3,6,7,8,9,10,11,12,13,14\}\\
\rotatebox[origin=c]{-90}{$\longrightarrow$} &&\rotatebox[origin=c]{-90}{$\supseteq$}\\
(t',c')=\xymatrix@C-30pt{
(\circ&\circ&\circ&\circ&\circ)&(\bullet&\bullet&\bullet&\bullet&\bullet&\bullet&\bullet&\bullet&\bullet)&(\circ&\circ&\circ&\circ)
}&&\mathsf{Tr}_1(t',c')=\{6,7,8,9,10,11,12,13,14\}
\end{array}
\]
The fact that a morphism of segments of the form $(t,c) \to (t',c')$ gives rise to an inclusion $\mathsf{Tr}_1(t',c') \subseteq \mathsf{Tr}_1(t,c)$ is explained by Proposition \ref{prop:preserve_Tr_b_morphism}.
\end{example}

\begin{proposition}\label{prop:preserve_Tr_b_morphism}
Let $(f_1,f_0):(t,c) \to (t',c')$ be a morphism in $\mathbf{Seg}(\Omega)$. If the relation $f_1(i) \in \mathsf{Tr}_b(t',c')$ holds, then so does the relation $i \in \mathsf{Tr}_b(t,c)$. 
\end{proposition}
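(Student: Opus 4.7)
The plan is a short chase through the definitions: a morphism of segments carries exactly the two pieces of data — the commuting square relating $t, t', f_1, f_0$ and the pointwise inequality $c' \circ f_0 \preceq c$ — and these are precisely what is needed to transport membership in $\mathsf{Tr}_b$ backwards along $f_1$.

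First, I would unfold the hypothesis: $f_1(i) \in \mathsf{Tr}_b(t', c')$ means, by the definition of $\mathsf{Tr}_b$, that $b \preceq c' \circ t'\bigl(f_1(i)\bigr)$ holds in $\Omega$. Next, I would invoke the commutative square accompanying the morphism $(f_1, f_0)$, namely $t' \circ f_1 = f_0 \circ t$, in order to rewrite this inequality as
\[
b \;\preceq\; c' \circ f_0 \circ t(i).
\]

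Then I would apply the second defining condition of a morphism of segments, the pointwise inequality $c' \circ f_0(j) \preceq c(j)$ for every $j \in [n_0]$, to the specific index $j := t(i) \in [n_0]$. This yields $c' \circ f_0 \circ t(i) \preceq c \circ t(i)$. Combining the two inequalities via transitivity of $\preceq$ gives $b \preceq c \circ t(i)$, which is exactly the statement that $i \in \mathsf{Tr}_b(t,c)$.

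There is no real obstacle here; the proposition is essentially a direct verification that the two axioms of a segment morphism are packaged exactly so as to make $\mathsf{Tr}_b$ a contravariant operation on segments with respect to $f_1$. The only bookkeeping point worth stating explicitly is that $t(i)$ lies in $[n_0]$, so that the pointwise inequality on $c', c$ can be applied at that index; beyond this, the argument is three lines of substitution and transitivity.
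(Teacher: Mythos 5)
Your proof is correct and is essentially identical to the paper's: both unfold $f_1(i) \in \mathsf{Tr}_b(t',c')$ to $b \preceq c' \circ t' \circ f_1(i)$, rewrite via the commuting square as $c' \circ f_0 \circ t(i)$, bound this by $c \circ t(i)$ using the pointwise inequality, and conclude by transitivity. No differences worth noting.
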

\begin{proof}
Recall that, by definition of a morphism in $\mathbf{Seg}(\Omega)$, the inequality $c' \circ f_0 \preceq c$ holds. Now, if the relation $f_1(i) \in \mathsf{Tr}_b(t',c')$ holds, then so do the following pre-order relations.
\[
b \preceq c' \circ t'  \circ f_1(i) = c' \circ f_0 \circ  t (i) \preceq  c \circ  t (i)
\]
By transitivity, we obtain the inequality $b \preceq c \circ  t (i)$, so that $i$ must be in $\mathsf{Tr}_b(t,c)$.
\end{proof}

\begin{proposition}\label{prop:Tr_functor_Set_op}
For every element $b \in \Omega$ and non-negative integer $n_1$, the mapping $(c,t) \mapsto \mathsf{Tr}_b(t,c)$ extends to a functor
$\mathsf{Tr}_b:\mathbf{Seg}(\Omega\,|\,n_1) \to \mathbf{Set}^{\mathrm{op}}$, which factorizes through the opposite category of sub-objects of $[n_1]$.
\end{proposition}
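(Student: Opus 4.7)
The plan is to build the functor $\mathsf{Tr}_b$ on objects by the prescription already given, $(t,c) \mapsto \mathsf{Tr}_b(t,c) \subseteq [n_1]$, and to build it on morphisms out of Proposition \ref{prop:preserve_Tr_b_morphism}. The key observation is that every morphism in $\mathbf{Seg}(\Omega\,|\,n_1)$ has, by definition, the form $(\mathrm{id},f_0):(t,c) \to (t',c')$, so the component $f_1$ in Proposition \ref{prop:preserve_Tr_b_morphism} is the identity on $[n_1]$. The contrapositive-free statement of that proposition then reads: if $i \in \mathsf{Tr}_b(t',c')$, then $i \in \mathsf{Tr}_b(t,c)$. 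In other words, each morphism $(t,c) \to (t',c')$ in $\mathbf{Seg}(\Omega\,|\,n_1)$ gives a canonical set-theoretic inclusion
\[
\mathsf{Tr}_b(t',c') \hookrightarrow \mathsf{Tr}_b(t,c)
\]
of subsets of $[n_1]$, which is a morphism in the category of sub-objects of $[n_1]$.

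Next, I would declare $\mathsf{Tr}_b$ on arrows by sending such a morphism $(t,c) \to (t',c')$ to the above inclusion, viewed as an arrow $\mathsf{Tr}_b(t,c) \to \mathsf{Tr}_b(t',c')$ in the \emph{opposite} of the category of sub-objects. To verify functoriality, I would check the two clauses directly: the identity morphism on $(t,c)$ is sent to the identity inclusion $\mathsf{Tr}_b(t,c) \subseteq \mathsf{Tr}_b(t,c)$; and given a composable pair $(t,c) \to (t',c') \to (t'',c'')$, the two successive inclusions $\mathsf{Tr}_b(t'',c'') \subseteq \mathsf{Tr}_b(t',c') \subseteq \mathsf{Tr}_b(t,c)$ compose to the inclusion attached to the composite morphism, since in both cases the underlying map on elements of $[n_1]$ is the identity. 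This is essentially immediate because, by Proposition \ref{prop:quasi_homologous_preordered_category}, $\mathbf{Seg}(\Omega\,|\,n_1)$ is a pre-order category, so there is at most one arrow between any two objects and no coherence conditions to check beyond existence.

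Finally, the factorization claim is a matter of reading off the construction: every value $\mathsf{Tr}_b(t,c)$ is, by definition, a subset of $[n_1]$, i.e.\ a sub-object of $[n_1]$, and every image of a morphism under $\mathsf{Tr}_b$ is an inclusion of such subsets, i.e.\ a morphism of sub-objects. Hence $\mathsf{Tr}_b$ factors through the inclusion of the opposite category of sub-objects of $[n_1]$ into $\mathbf{Set}^{\mathrm{op}}$. There is no real obstacle in this proof; the only subtle point is the reversal of direction coming from the contravariance, and the cleanest way to package it is to invoke Proposition \ref{prop:preserve_Tr_b_morphism} with $f_1 = \mathrm{id}$ as the single substantive input.
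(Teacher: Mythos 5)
Your argument is correct and matches the paper's own proof essentially step for step: objects are sent to subsets of $[n_1]$, morphisms are sent to the inclusions supplied by Proposition \ref{prop:preserve_Tr_b_morphism} with $f_1 = \mathrm{id}$, and functoriality is automatic because the target is a pre-order category. The only difference is that you spell out the identity and composition checks explicitly, which the paper dispatches in one line.
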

\begin{proof}
By definition, for every segment $(c,t)$ in $\mathbf{Seg}(\Omega\,|\,n_1)$, the set $\mathsf{Tr}_b(t,c)$ is a subset of $[n_1]$. For every morphism $(\mathrm{id},f_0):(t,c) \to (t,c')$ in $\mathbf{Seg}(\Omega\,|\,n_1)$, Proposition \ref{prop:preserve_Tr_b_morphism} shows that there is an inclusion
$\mathsf{Tr}_b(t',c') \subseteq \mathsf{Tr}_b(t,c)$. Since the opposite category of sub-objects of $[n_1]$ is a pre-order category, the statement follows. 
\end{proof}

In fact, Proposition \ref{prop:Tr_functor_Set_op} hides a more general construction if one allows the consideration of the category $\mathbf{Set}_{\ast}$ of pointed sets and point-preserving maps (see Example \ref{exa:explain_pointed_maps}). Recall that there is an adjunction
\[
\xymatrix{
\mathbf{Set} \ar@<+1.2ex>[r]^{\mathbb{F}} \ar@{}[r]|{\bot}\ar@<-1.2ex>@{<-}[r]_{\mathbb{U}} & \mathbf{Set}_{\ast}
}
\]
whose right adjoint $\mathbb{U}:\mathbf{Set}_{\ast} \to \mathbf{Set}$ forgets the pointed structure (i.e. $\mathbb{U}:(X,p) \mapsto X$) and whose left adjoint 
$\mathbb{F}:\mathbf{Set} \to \mathbf{Set}_{\ast}$ maps a set $X$ to the obvious pointed set $(X+\{\ast\},\ast)$ and maps a function $f:X \to Y$ to the coproduct map $f+\{\ast\}:X+\{\ast\} \to Y+\{\ast\}$.

\begin{proposition}\label{prop:Tr_functor_pointed_Set_op}
For every element $b \in \Omega$, the mapping $(c,t) \mapsto \mathbb{F}\mathsf{Tr}_b(t,c)$ extends to a functor $\mathsf{Tr}_b^{\ast}:\mathbf{Seg}(\Omega) \to \mathbf{Set}_{\ast}^{\mathrm{op}}$ mapping every function $(f_1,f_0):(t,c) \to (t',c')$ in $\mathbf{Seg}(\Omega)$ to the following map of pointed sets.
\[
\begin{array}{llllll}
\mathsf{Tr}_b^{\ast}(f_1,f_0)&:&\mathbb{F}\mathsf{Tr}_b(t',c')&\to&\mathbb{F}\mathsf{Tr}_b(t,c)&\\
&&j&\mapsto & i &\textrm{if } \exists i \in \mathsf{Tr}_b(t,c) \,: \, j = f_1(i);\\
&&j&\mapsto & \ast &\textrm{otherwise.}\\
\end{array}
\]
\end{proposition}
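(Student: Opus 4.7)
The plan is to verify three things in order: (a) that for each morphism $(f_1,f_0):(t,c)\to(t',c')$ the formula given defines a well-defined map of pointed sets $\mathbb{F}\mathsf{Tr}_b(t',c')\to\mathbb{F}\mathsf{Tr}_b(t,c)$; (b) that $\mathsf{Tr}_b^{\ast}$ sends identities to identities; and (c) that $\mathsf{Tr}_b^{\ast}$ reverses composition (so that it is a functor into the opposite category).

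For (a), I would first observe that the two defining clauses are exclusive and exhaustive once we agree that the base point $\ast\in\mathbb{F}\mathsf{Tr}_b(t',c')$ is never of the form $f_1(i)$ with $i\in[n_1]$, so the basepoint falls into the ``otherwise'' clause and is sent to $\ast$; this makes the map point-preserving. Uniqueness of $i$ in the first clause follows from the fact that $f_1$ is an injection (as required in the definition of a morphism in $\mathbf{Seg}(\Omega)$). What needs a word of justification is that whenever $j\in\mathsf{Tr}_b(t',c')$ admits a preimage $i\in[n_1]$ under $f_1$, that $i$ is automatically in $\mathsf{Tr}_b(t,c)$ — but this is exactly the content of Proposition~\ref{prop:preserve_Tr_b_morphism}. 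So the rule $j\mapsto i$ of the first clause genuinely lands in $\mathbb{F}\mathsf{Tr}_b(t,c)$.

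Step (b) is immediate: when $(f_1,f_0)=(\mathrm{id},\mathrm{id})$, every $j\in\mathsf{Tr}_b(t,c)$ satisfies $j=\mathrm{id}(j)$, so the first clause sends $j$ to itself, and the second sends $\ast$ to $\ast$.

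The heart is step (c). Given a composable pair $(f_1,f_0):(t,c)\to(t',c')$ and $(g_1,g_0):(t',c')\to(t'',c'')$, I would evaluate the two maps $\mathsf{Tr}_b^{\ast}(g_1f_1,g_0f_0)$ and $\mathsf{Tr}_b^{\ast}(f_1,f_0)\circ\mathsf{Tr}_b^{\ast}(g_1,g_0)$ on an arbitrary element $k\in\mathbb{F}\mathsf{Tr}_b(t'',c'')$ and distinguish three cases: either $k=g_1f_1(i)$ for some (necessarily unique) $i\in[n_1]$, or $k=g_1(j)$ for some $j\in[n_1']$ with no $f_1$-preimage, or $k$ has no $g_1$-preimage at all (including $k=\ast$). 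In the first case both maps return $i$; in the second, the left side returns $\ast$ (no $g_1f_1$-preimage, since otherwise injectivity of $g_1$ would furnish an $f_1$-preimage of $j$), while the right side first returns $j$ and then $\ast$; in the third, both return $\ast$ directly. The only subtle point here is the use of injectivity of $g_1$ in the second case to avoid double-counting preimages; this is the step I expect to require the most care, but it is a routine unwinding rather than a real obstacle. Together (a)--(c) show that $\mathsf{Tr}_b^{\ast}$ is a functor $\mathbf{Seg}(\Omega)\to\mathbf{Set}_{\ast}^{\mathrm{op}}$.
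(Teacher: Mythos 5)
Your proof is correct and follows the same route as the paper: the paper's entire proof is ``Follows from Proposition~\ref{prop:preserve_Tr_b_morphism},'' and your argument simply spells out the routine verifications (well-definedness, identities, reversal of composition) whose only non-trivial ingredient is that same proposition. Nothing to object to.
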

\begin{proof}
Follows from Proposition \ref{prop:preserve_Tr_b_morphism}.
\end{proof}

\begin{example}[Truncation operations and pointed sets]\label{exa:explain_pointed_maps}
Let $(\Omega,\preceq)$ be the Boolean pre-ordered set $\{0 \leq 1\}$. If we consider the morphism of segments of Example \ref{exa:Flexibility_morphism}, which is further specifed below, on the left, by using adequate labeling to show how the first segment is mapped to the second one, we can see that the truncation operation $\mathsf{Tr}_1$, displayed on the right, forces us to consider a map of pointed sets.
\[
\begin{array}{cccc}
\xymatrix@C-30pt{
(\mathop{\bullet}\limits^1&\mathop{\bullet}\limits^2&\mathop{\bullet}\limits^3)&(\mathop{\circ}\limits^4&\mathop{\circ}\limits^5)&(\mathop{\bullet}\limits^6&\mathop{\bullet}\limits^7&\mathop{\bullet}\limits^8&\mathop{\bullet}\limits^9)&(\bullet&\bullet&\bullet&\bullet&\bullet)&(\circ&\circ&\circ)&(\circ)
}&\{1,2,3,6,7,8,9,10,11,12,13,14\}&&?\\
\rotatebox[origin=c]{-90}{$\longrightarrow$} &\rotatebox[origin=c]{90}{$\longrightarrow$}&&\rotatebox[origin=c]{90}{$\longrightarrow$}\\
\xymatrix@C-30pt{
(\mathop{\bullet}\limits^1&\mathop{\bullet}\limits^2&\mathop{\bullet}\limits^3&\mathop{\bullet}\limits^{\ast})&(\mathop{\bullet}\limits^{\ast})&(\mathop{\circ}\limits^4&\mathop{\circ}\limits^5&\mathop{\circ}\limits^{\ast}&\mathop{\circ}\limits^{\ast})&(\mathop{\bullet}\limits^6&\mathop{\bullet}\limits^7&\mathop{\bullet}\limits^8&\mathop{\bullet}\limits^9)&(\bullet&\bullet&\bullet&\bullet&\bullet)&(\circ&\circ&\circ)&(\circ)
}&\{1,2,3,10,11,12,\dots,16,17,18\}&\cup&\{4,5\}
\end{array}
\]
\end{example}

\begin{proposition}\label{prop:Tr_ast_on_Chr_Omega_t_equals_F_Tr}
For every element $b \in \Omega$ and non-negative integer $n_1$, the following diagram commutes.
\[
\xymatrix@C-8pt{
*+!R(.4){\mathbf{Seg}(\Omega\,|\,n_1)}\ar[d]_{\mathsf{Tr}_b}\ar[r]^{\subseteq}&*+!L(.4){\mathbf{Seg}(\Omega)}\ar[d]^{\mathsf{Tr}_b^{\ast}}\\
\mathbf{Set}^{\mathrm{op}}\ar[r]_{\mathbb{F}^{\mathrm{op}}}&*+!L(.4){\mathbf{Set}_{\ast}^{\mathrm{op}}}
}
\]
\end{proposition}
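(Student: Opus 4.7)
The plan is to verify commutativity separately on objects and on morphisms, since this is a diagram of functors. On objects, both paths send a segment $(t,c)$ of domain $[n_1]$ to the pointed set $\mathbb{F}\mathsf{Tr}_b(t,c)$ by unwinding the definitions: the lower-left path applies $\mathsf{Tr}_b$ (giving the subset $\mathsf{Tr}_b(t,c) \subseteq [n_1]$) and then $\mathbb{F}^{\mathrm{op}}$, while the upper-right path restricts $\mathsf{Tr}_b^{\ast}$ along the inclusion $\mathbf{Seg}(\Omega\,|\,n_1) \hookrightarrow \mathbf{Seg}(\Omega)$, which by definition is $\mathbb{F}\mathsf{Tr}_b(t,c)$. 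So the object-level equality is immediate.

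The real work is the morphism-level verification. I would fix a morphism $(\mathrm{id},f_0):(t,c) \to (t,c')$ in $\mathbf{Seg}(\Omega\,|\,n_1)$ and describe both paths concretely. Along the lower-left path, $\mathsf{Tr}_b$ sends $(\mathrm{id},f_0)$ to the set inclusion $\mathsf{Tr}_b(t,c') \subseteq \mathsf{Tr}_b(t,c)$ (regarded as a morphism in $\mathbf{Set}^{\mathrm{op}}$, so ``oriented'' from $\mathsf{Tr}_b(t,c)$ to $\mathsf{Tr}_b(t,c')$), and then $\mathbb{F}^{\mathrm{op}}$ applies the free pointed-set construction, yielding the pointed map
\[
\iota : \mathbb{F}\mathsf{Tr}_b(t,c') \to \mathbb{F}\mathsf{Tr}_b(t,c),\quad j \mapsto j,\quad \ast \mapsto \ast,
\]
i.e.\ the extension of the inclusion by the identity on basepoints.

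Along the upper-right path, the map $\mathsf{Tr}_b^{\ast}(\mathrm{id},f_0):\mathbb{F}\mathsf{Tr}_b(t,c') \to \mathbb{F}\mathsf{Tr}_b(t,c)$ is given by the case distinction of Proposition \ref{prop:Tr_functor_pointed_Set_op}: an element $j \in \mathsf{Tr}_b(t,c')$ is sent to $i$ whenever there exists $i \in \mathsf{Tr}_b(t,c)$ with $j = \mathrm{id}(i) = i$, and to $\ast$ otherwise. The key point is that Proposition \ref{prop:preserve_Tr_b_morphism}, specialised to $f_1 = \mathrm{id}$, tells us precisely that every $j \in \mathsf{Tr}_b(t,c')$ already lies in $\mathsf{Tr}_b(t,c)$, so the ``otherwise'' branch is vacuous and $\mathsf{Tr}_b^{\ast}(\mathrm{id},f_0)(j) = j$. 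Matching this against the map $\iota$ above gives the required equality, and functoriality on identities is also trivially preserved.

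The only conceptual obstacle is bookkeeping the opposite categories correctly, since both $\mathsf{Tr}_b$ and $\mathsf{Tr}_b^{\ast}$ land in opposite categories and it is easy to confuse domain and codomain. Once one sets conventions (e.g.\ always writing arrows in $\mathbf{Set}$ or $\mathbf{Set}_{\ast}$ rather than in the opposite category), the proof is a direct unpacking of definitions with Proposition \ref{prop:preserve_Tr_b_morphism} as the single nontrivial input.
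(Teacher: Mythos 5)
Your proposal is correct and follows essentially the same route as the paper's own proof: both restrict $\mathsf{Tr}_b^{\ast}$ to $\mathbf{Seg}(\Omega\,|\,n_1)$, observe that with $f_1=\mathrm{id}$ the ``otherwise'' branch of Proposition \ref{prop:Tr_functor_pointed_Set_op} applies only to the basepoint (via Proposition \ref{prop:preserve_Tr_b_morphism}), and identify the resulting map with $\mathbb{F}$ applied to the inclusion $\mathsf{Tr}_b(t,c')\subseteq\mathsf{Tr}_b(t,c)$. Your write-up is merely more explicit about the object-level check and the opposite-category bookkeeping.
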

\begin{proof}
By definition, if we restrict the functor $\mathsf{Tr}_b^{\ast}:\mathbf{Seg}(\Omega) \to \mathbf{Set}_{\ast}^{\mathrm{op}}$ to the subcategory $\mathbf{Seg}(\Omega\,|\,n_1) \hookrightarrow \mathbf{Seg}(\Omega)$, then every morphism $(t,c) \leq (t,c')$ in $\mathbf{Seg}(\Omega\,|\,n_1)$ is sent to the following map in $\mathbf{Set}_{\ast}$.
\[
\begin{array}{llllll}
\mathsf{Tr}_b^{\ast}(f_1,f_0)&:&\mathbb{F}\mathsf{Tr}_b(t,c')&\to&\mathbb{F}\mathsf{Tr}_b(t,c)&\\
&&j&\mapsto & j & j \in \mathsf{Tr}_b(t,c')\\
&&\ast&\mapsto & \ast &\textrm{otherwise.}\\
\end{array}
\]
This means that the restriction of $\mathsf{Tr}_b^{\ast}$ on $\mathbf{Seg}(\Omega\,|\,n_1)$ can be retrieved from the application of the functor $\mathbb{F}$ on the images of $\mathsf{Tr}_b$.
\end{proof}

\subsection{Example of pedigrads in sets}
In this section, we construct a collection of functors $\mathbf{Seg}(\Omega) \to \mathbf{Set}$ by using any pointed set $(E,\varepsilon)$ and a parameter in $b \in \Omega$ (see Definition \ref{def:set_E_b_varepsilon}). Later on, we will define various classes of cones $\mathcal{W}$ in $\mathbf{Set}$ for which these functors are $\mathcal{W}$-pedigrad.

\begin{convention}[Notation]
In the sequel, the hom-set of a category $\mathcal{C}$ from an object $X$ to an object $Y$ will be denoted as $\mathcal{C}(X,Y)$. For instance, the set of functions from a set $X$ to a set $Y$ will be denoted by $\mathbf{Set}(X,Y)$. Recall that, for any category $\mathcal{C}$, the hom-sets give rise to a functor $\mathcal{C}(\_,\_):\mathcal{C}^{\mathrm{op}}\times \mathcal{C} \to \mathbf{Set}$ called the \emph{hom-functor} \cite[page 27]{MacLane}.
\end{convention}

\begin{definition}[Canonical pedigrads]\label{def:set_E_b_varepsilon}
For every element $b \in \Omega$, we will denote by $E_{b}^{\varepsilon}$ the functor $\mathbf{Seg}(\Omega) \to \mathbf{Set}$ defined as the composition of the following functors.
\[
\xymatrix@C+20pt{
\mathbf{Seg}(\Omega)\ar[r]^{\mathsf{Tr}_b^{\ast}}&\mathbf{Set}_{\ast}^{\mathrm{op}}\ar[rr]^{\mathbf{Set}_{\ast}(\_,(E,\varepsilon))}&&\mathbf{Set}
}
\]
\end{definition}

\begin{remark}\label{rem:E_b_varepsilon_as words_functions}
For every object $(t,c)$ in $\mathbf{Seg}(\Omega)$, an element in $E_b^{\varepsilon}(t,c)$ can be seen as a function of the form $\mathsf{Tr}_b(t,c) \to E$ according to the following series of bijections.
\begin{align*}
E_b^{\varepsilon}(t,c)& = \mathbf{Set}_{\ast}(\mathsf{Tr}_b^{\ast}(t,c),(E,\varepsilon))&\\
&= \mathbf{Set}_{\ast}(\mathbb{F}\mathsf{Tr}_b(t,c),(E,\varepsilon))&(\textrm{Def. of }\mathsf{Tr}_b^{\ast})\\
&\cong \mathbf{Set}(\mathsf{Tr}_b(t,c),\mathbb{U}(E,\varepsilon))&(\mathbb{F} \dashv \mathbb{U})\\
&= \mathbf{Set}(\mathsf{Tr}_b(t,c),E)&(\textrm{Def. of }\mathbb{U})
\end{align*}
Because the set $\mathsf{Tr}_b(t,c)$ is equipped with the natural order of natural numbers, we will represent an element in $E_b^{\varepsilon}(t,c)$ as a word of elements in $E$ (see Example \ref{exa:elements_as_words}).
\end{remark}

\begin{example}[Objects]\label{exa:elements_as_words}
Suppose that $\Omega$ denotes the Boolean pre-ordered set $\{0\leq 1\}$ and let $(E,\varepsilon)$ be the pointed set $\{\mathtt{A},\mathtt{C},\mathtt{G},\mathtt{T},\varepsilon\}$. If we consider the segment
\[
(c,t) = \xymatrix@C-30pt{
(\bullet&\bullet&\bullet)&(\circ&\circ)&(\bullet&\bullet&\bullet&\bullet)&(\bullet&\bullet&\bullet&\bullet&\bullet)&(\circ&\circ&\circ)&(\circ)
}
\]
then the set $E_1^{\varepsilon}(t,c)$ (where $b = 1$) will contain the following words (which have been parenthesized for clarity), among many others.
\[
\begin{array}{c}
\xymatrix@C-30pt{
(\mathtt{A}&\mathtt{G}&\varepsilon)&(\mathtt{T}&\mathtt{C}&\mathtt{A}&\mathtt{A})&(\mathtt{T}&\mathtt{A}&\mathtt{G}&\mathtt{G}&\varepsilon)
}\\
\xymatrix@C-30pt{
(\mathtt{G}&\mathtt{T}&\varepsilon)&(\varepsilon&\varepsilon&\varepsilon&\mathtt{C})&(\mathtt{A}&\mathtt{G}&\mathtt{T}&\mathtt{A}&\mathtt{C})
}\\
\xymatrix@C-30pt{
(\mathtt{T}&\mathtt{A}&\mathtt{A})&(\mathtt{G}&\mathtt{A}&\mathtt{T}&\mathtt{C})&(\mathtt{A}&\mathtt{G}&\mathtt{T}&\mathtt{T}&\mathtt{T})
}\\
\textrm{etc.}\\
\end{array}
\]
\end{example}

\begin{example}[Morphisms]\label{exa:morphisms_as_inclusion_of_words}
Suppose that $\Omega$ denotes the Boolean pre-ordered set $\{0\leq 1\}$ and let $(E,\varepsilon)$ be the pointed set $\{\mathtt{A},\mathtt{C},\mathtt{G},\mathtt{T},\varepsilon\}$. If we consider the morphism of segments given below, in which we use adequate labeling to show how the first segment is included in the second one,
\[
\xymatrix@C-30pt{
(\mathop{\bullet}\limits^1&\mathop{\bullet}\limits^2&\mathop{\bullet}\limits^3)&(\mathop{\circ}\limits^4&\mathop{\circ}\limits^5)&(\mathop{\bullet}\limits^6&\mathop{\bullet}\limits^7&\mathop{\bullet}\limits^8&\mathop{\bullet}\limits^9)&(\mathop{\bullet}\limits^{10\,}&\mathop{\bullet}\limits^{11})
} \to \xymatrix@C-30pt{
(\mathop{\bullet}\limits^1&\mathop{\bullet}\limits^2&\mathop{\bullet}\limits^3&\mathop{\bullet}\limits^{\ast}&\mathop{\bullet}\limits^{\ast})&(\mathop{\circ}\limits^4&\mathop{\circ}\limits^5&\mathop{\circ}\limits^{\ast})&(\mathop{\bullet}\limits^6&\mathop{\bullet}\limits^7&\mathop{\bullet}\limits^8&\mathop{\bullet}\limits^9)&(\mathop{\bullet}\limits^{\ast})&(\mathop{\circ}\limits^{10\,}&\mathop{\circ}\limits^{11})
}
\]
then the function obtained from the application of $E_1^{\varepsilon}$ will have the following mapping rules.
\[
\begin{array}{ccc}
\xymatrix@C-30pt{
(\mathtt{A}&\mathtt{G}&\varepsilon)&(\mathtt{T}&\mathtt{C}&\mathtt{A}&\mathtt{A})&(\mathtt{G}&\mathtt{C})
} &\mapsto &
\xymatrix@C-30pt{
(\mathtt{A}&\mathtt{G}&\varepsilon&\varepsilon&\varepsilon)&(\mathtt{T}&\mathtt{C}&\mathtt{A}&\mathtt{A})&(\varepsilon)
}\\
\xymatrix@C-30pt{
(\mathtt{G}&\mathtt{T}&\varepsilon)&(\varepsilon&\varepsilon&\varepsilon&\mathtt{C})&(\mathtt{T}&\mathtt{A})
} &\mapsto &
\xymatrix@C-30pt{
(\mathtt{G}&\mathtt{T}&\varepsilon&\varepsilon&\varepsilon)&(\varepsilon&\varepsilon&\varepsilon&\mathtt{C})&(\varepsilon)
}\\
\xymatrix@C-30pt{
(\mathtt{T}&\mathtt{A}&\mathtt{A})&(\mathtt{G}&\mathtt{A}&\mathtt{T}&\mathtt{C})&(\mathtt{A}&\mathtt{A})
} &\mapsto& 
\xymatrix@C-30pt{
(\mathtt{T}&\mathtt{A}&\mathtt{A}&\varepsilon&\varepsilon)&(\mathtt{G}&\mathtt{A}&\mathtt{T}&\mathtt{C})&(\varepsilon)
}\\
&\textrm{etc.}&\\
\end{array}
\]
If one restricts oneself to morphisms that only insert symbols $\varepsilon$ and do not turn any black node into white ones, then the mappings associated with this type of morphism can be assimilated to the gap insertion operations used in sequence alignment algorithms (see \cite{Rosenberg} or \cite[Chap. 1, sec. 5]{Mount}) in order to compare two sequences of different lengths \cite{SellersRecog,NeedlemanWunsch}.
\[
\xymatrix@C-30pt{
(\mathop{\bullet}\limits^1&\mathop{\bullet}\limits^2&\mathop{\bullet}\limits^3)&(\mathop{\circ}\limits^4&\mathop{\circ}\limits^5)&(\mathop{\bullet}\limits^6&\mathop{\bullet}\limits^7&\mathop{\bullet}\limits^8&\mathop{\bullet}\limits^9)&(\mathop{\bullet}\limits^{10\,}&\mathop{\bullet}\limits^{11})
} \to \xymatrix@C-30pt{
(\mathop{\bullet}\limits^1&\mathop{\bullet}\limits^2&\mathop{\bullet}\limits^3)&(\mathop{\circ}\limits^4&\mathop{\circ}\limits^5&\mathop{\circ}\limits^{\ast})&(\mathop{\bullet}\limits^6&\mathop{\bullet}\limits^7&\mathop{\bullet}\limits^{\ast}&\mathop{\bullet}\limits^{\ast}&\mathop{\bullet}\limits^8&\mathop{\bullet}\limits^9)&(\mathop{\bullet}\limits^{\ast}&\mathop{\bullet}\limits^{10\,}&\mathop{\bullet}\limits^{11})
}
\]
\[
\begin{array}{ccc}
\xymatrix@C-30pt{
(\mathtt{G}&\mathtt{A}&\mathtt{C})&(\mathtt{A}&\mathtt{T}&\mathtt{T}&\mathtt{C})&(\mathtt{C}&\mathtt{T})
} &\mapsto &
\xymatrix@C-30pt{
(\mathtt{G}&\mathtt{A}&\mathtt{C})&(\mathtt{A}&\mathtt{T}&\varepsilon&\varepsilon&\mathtt{T}&\mathtt{C})&(\varepsilon&\mathtt{C}&\mathtt{T})
}\\
&\textrm{etc.}&\\
\end{array}
\]
\end{example}

\begin{proposition}\label{prop:pedigrad_representable}
For every domain $[n_1]$, the restriction of the functor $E_{b}^{\varepsilon}:\mathbf{Seg}(\Omega) \to \mathbf{Set}$ on $\mathbf{Seg}(\Omega\,|\,n_1)$ is isomorphic to the functor $\mathbf{Set}(\mathsf{Tr}_b(\_),E):\mathbf{Seg}(\Omega\,|\,n_1) \to \mathbf{Set}$. In other words, the following diagram commutes up to an isomorphism of functors.
\[
\xymatrix{
\mathbf{Seg}(\Omega\,|\,n_1)\ar[r]^{\subseteq}\ar[d]_{\mathbf{Tr}_b}&\mathbf{Seg}(\Omega)\ar[d]^{E_{b}^{\varepsilon}}\\
\mathbf{Set}^{\mathrm{op}}\ar[r]_{\mathbf{Set}(\_,E)}&\mathbf{Set}
}
\]
\end{proposition}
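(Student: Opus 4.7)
The plan is to unfold the definition of $E_b^{\varepsilon}$ and then apply the two preceding results (Proposition \ref{prop:Tr_ast_on_Chr_Omega_t_equals_F_Tr} and the adjunction $\mathbb{F}\dashv \mathbb{U}$) in sequence. Concretely, by definition, the functor $E_b^{\varepsilon}$ is the composition $\mathbf{Set}_{\ast}(\_,(E,\varepsilon)) \circ \mathsf{Tr}_b^{\ast}$. Precomposing with the inclusion $\mathbf{Seg}(\Omega\,|\,n_1) \hookrightarrow \mathbf{Seg}(\Omega)$ and invoking Proposition \ref{prop:Tr_ast_on_Chr_Omega_t_equals_F_Tr}, which identifies $\mathsf{Tr}_b^{\ast}\circ{\subseteq}$ with $\mathbb{F}^{\mathrm{op}}\circ\mathsf{Tr}_b$, the restricted functor becomes $\mathbf{Set}_{\ast}(\_,(E,\varepsilon)) \circ \mathbb{F}^{\mathrm{op}} \circ \mathsf{Tr}_b$.

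Next, I would appeal to the adjunction $\mathbb{F}\dashv \mathbb{U}$, which supplies, for every set $X$, a natural bijection
\[
\mathbf{Set}_{\ast}(\mathbb{F}(X),(E,\varepsilon)) \;\cong\; \mathbf{Set}(X,\mathbb{U}(E,\varepsilon)) \;=\; \mathbf{Set}(X,E).
\]
The naturality of this bijection in $X$ says precisely that the composite $\mathbf{Set}_{\ast}(\_,(E,\varepsilon)) \circ \mathbb{F}^{\mathrm{op}}$ is isomorphic, as a functor $\mathbf{Set}^{\mathrm{op}} \to \mathbf{Set}$, to $\mathbf{Set}(\_,E)$. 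Postcomposing this natural isomorphism with $\mathsf{Tr}_b$ yields the desired isomorphism of functors $\mathbf{Seg}(\Omega\,|\,n_1) \to \mathbf{Set}$ between the restriction of $E_b^{\varepsilon}$ and $\mathbf{Set}(\mathsf{Tr}_b(\_),E)$, which is exactly the commutativity of the diagram up to isomorphism.

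There is no real obstacle here: the proof is essentially a bookkeeping exercise that strings together a definitional unfolding, a previously established commuting square, and the standard hom-adjunction. The only subtlety worth being careful about is ensuring that the natural isomorphism provided by $\mathbb{F}\dashv \mathbb{U}$ is used as a natural transformation of functors (rather than a family of set-bijections), so that the resulting isomorphism lives in the functor category $[\mathbf{Seg}(\Omega\,|\,n_1),\mathbf{Set}]$. This is automatic from the standard formulation of the adjunction, and in fact the chain of bijections already displayed in Remark \ref{rem:E_b_varepsilon_as words_functions} gives the component-wise isomorphism; the present proposition merely records that this identification is natural in $(t,c)$ when $(t,c)$ is restricted to the quasi-homologous subcategory $\mathbf{Seg}(\Omega\,|\,n_1)$.
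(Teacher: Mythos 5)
Your proposal is correct and follows essentially the same route as the paper's own proof: unfold the definition of $E_b^{\varepsilon}$, replace $\mathsf{Tr}_b^{\ast}$ restricted to $\mathbf{Seg}(\Omega\,|\,n_1)$ by $\mathbb{F}^{\mathrm{op}}\circ\mathsf{Tr}_b$ via Proposition \ref{prop:Tr_ast_on_Chr_Omega_t_equals_F_Tr}, and then apply the natural hom-bijection of the adjunction $\mathbb{F}\dashv\mathbb{U}$. Your added remark about using the adjunction isomorphism as a natural transformation (rather than a mere family of bijections) is precisely the point the paper compresses into its closing sentence.
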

\begin{proof}
Note that the following series of isomorphisms hold on $\mathbf{Seg}(\Omega\,|\,n_1)$.
\begin{align*}
E_{b}^{\varepsilon}(\_) & = \mathbf{Set}_{\ast}\Big(\mathsf{Tr}_b^{\ast}(\_),(E,\varepsilon)\Big)&\\
& = \mathbf{Set}_{\ast}\Big(\mathbb{F}\mathsf{Tr}_b(\_),(E,\varepsilon)\Big)&(\textrm{Proposition }\ref{prop:Tr_ast_on_Chr_Omega_t_equals_F_Tr})\\
& \cong \mathbf{Set}\Big(\mathsf{Tr}_b(\_),\mathbb{U}(E,\varepsilon)\Big)&(\mathbb{F} \dashv \mathbb{U})\\
& = \mathbf{Set}\Big(\mathsf{Tr}_b(\_),E\Big)&(\textrm{Def. of }\mathbb{U})
\end{align*}
Because these isomorphisms are natural on $\mathbf{Seg}(\Omega\,|\,n_1)$, the statement follows.
\end{proof}

\subsection{Distributive and exactly distributive chromologies}\label{ssec:Distributive_and_exactly_distributive_chromologies}
Let $(\Omega,\preceq)$ be a pre-ordered set, $b$ be an element in $\Omega$, $A$ be a small category, $\tau$ be an object of $\mathbf{Seg}(\Omega)$ and $\rho:\Delta_{A}(\tau) \Rightarrow \theta$ be a cone in $\mathbf{Seg}(\Omega\,|\,n)$ for some non-negative integer $n$. Note that the application of the truncation functor $\mathsf{Tr}_b:\mathbf{Seg}(\Omega\,|\,n) \to \mathbf{Set}^{\mathrm{op}}$ on the cone $\rho$ gives rise to a cocone in $\mathbf{Set}$ as follows.
\begin{equation}\label{eq:cocone_for_definition_distributive_cones}
\mathsf{Tr}_b(\rho):\mathsf{Tr}_b\theta \Rightarrow \Delta_{A} \circ\mathsf{Tr}_b(\tau)
\end{equation}
According to Proposition \ref{prop:Tr_functor_Set_op}, this cocone can be seen as a diagram in the category of sub-objects of $[n]$. It follows that the colimit adjoint of (\ref{eq:cocone_for_definition_distributive_cones}) in $\mathbf{Set}$ can be factorized, via an epi-mono factorization, through the union of sets $\bigcup_{a \in A}\mathsf{Tr}_b\theta(a)$, as shown below.
\begin{equation}\label{eq:epi_mono_factorization_cocone_for_definition_distributive_cones}
\xymatrix{
\mathsf{colim}_A\mathsf{Tr}_b\theta \ar@/^2pc/[rr]^{\mathsf{lim}_A\mathsf{Tr}_b(\rho)}\ar[r]_-{\textrm{epi.}}^-e&\bigcup_{a \in A}\mathsf{Tr}_b\theta(a) \ar[r]_-{\textrm{mono.}}^-m &  \mathsf{Tr}_b(\tau)
}
\end{equation}

\begin{definition}[Distributive cones]\label{def:distributive_cones}
A cone of the form $\rho:\Delta_{A}(\tau) \Rightarrow \theta$ in $\mathbf{Seg}(\Omega\,|\,n)$  will be said to be \emph{$b$-distributive} if the arrow $m$ of (\ref{eq:epi_mono_factorization_cocone_for_definition_distributive_cones}) is an epimorphism (hence an identity).
\end{definition}

\begin{definition}[Exactly distributive cones]\label{def:exactly_distributive_cones}
A cone of the form $\rho:\Delta_{A}(\tau) \Rightarrow \theta$ in $\mathbf{Seg}(\Omega\,|\,n)$  will be said to be \emph{exactly $b$-distributive} if it is $b$-distributive and the arrow $e$ of (\ref{eq:epi_mono_factorization_cocone_for_definition_distributive_cones}) is a monomorphism (hence an bijection).
\end{definition}

\begin{example}[Homologous cones]\label{exa:Homologous_cones}
Let $\Omega$ denote the Boolean pre-ordered set $\{0 \leq 1\}$. We now give examples of 1-distributive and exactly 1-distributive cones taken in one of the subcategories of homologous segments of $\mathbf{Seg}(\Omega)$. We quickly describe the form of 0-distributive cones at the end of this example. 

First, we can give the following three inequalities as an example of 1-distributive cone in one of the pre-order categories $\mathbf{Seg}(\Omega:t)$ for the obvious topology $t$ of domain $[18]$.
\[
\begin{array}{l}
\xymatrix@C-30pt{
(\bullet&\bullet&\bullet)&(\circ&\circ)&(\bullet&\bullet&\bullet&\bullet)&(\bullet&\bullet&\bullet&\bullet&\bullet)&(\circ&\circ&\circ)&(\circ)
} \leq \xymatrix@C-30pt{
(\circ&\circ&\circ)&(\circ&\circ)&(\bullet&\bullet&\bullet&\bullet)&(\circ&\circ&\circ&\circ&\circ)&(\circ&\circ&\circ)&(\circ)
}\\
\xymatrix@C-30pt{
(\bullet&\bullet&\bullet)&(\circ&\circ)&(\bullet&\bullet&\bullet&\bullet)&(\bullet&\bullet&\bullet&\bullet&\bullet)&(\circ&\circ&\circ)&(\circ)
} \leq \xymatrix@C-30pt{
(\circ&\circ&\circ)&(\circ&\circ)&(\bullet&\bullet&\bullet&\bullet)&(\bullet&\bullet&\bullet&\bullet&\bullet)&(\circ&\circ&\circ)&(\circ)
}\\
\xymatrix@C-30pt{
(\bullet&\bullet&\bullet)&(\circ&\circ)&(\bullet&\bullet&\bullet&\bullet)&(\bullet&\bullet&\bullet&\bullet&\bullet)&(\circ&\circ&\circ)&(\circ)
} \leq \xymatrix@C-30pt{
(\bullet&\bullet&\bullet)&(\circ&\circ)&(\circ&\circ&\circ&\circ)&(\bullet&\bullet&\bullet&\bullet&\bullet)&(\circ&\circ&\circ)&(\circ)
}\\
\end{array}
\]
From a biological point of view, this type of cone could be used to specify various ways of selecting a set of codons or genes (depending on the scale).  

For their part, exactly 1-distributive cones cannot have common black patches that are not related via their diagram $A$. Below is an example of such a cone whose diagram is discrete.
\[
\begin{array}{l}
\xymatrix@C-30pt{
(\bullet&\bullet&\bullet)&(\circ&\circ)&(\bullet&\bullet&\bullet&\bullet)&(\bullet&\bullet&\bullet&\bullet&\bullet)&(\circ&\circ&\circ)&(\circ)
} \leq \xymatrix@C-30pt{
(\circ&\circ&\circ)&(\circ&\circ)&(\bullet&\bullet&\bullet&\bullet)&(\circ&\circ&\circ&\circ&\circ)&(\circ&\circ&\circ)&(\circ)
}\\
\xymatrix@C-30pt{
(\bullet&\bullet&\bullet)&(\circ&\circ)&(\bullet&\bullet&\bullet&\bullet)&(\bullet&\bullet&\bullet&\bullet&\bullet)&(\circ&\circ&\circ)&(\circ)
} \leq \xymatrix@C-30pt{
(\circ&\circ&\circ)&(\circ&\circ)&(\circ&\circ&\circ&\circ)&(\bullet&\bullet&\bullet&\bullet&\bullet)&(\circ&\circ&\circ)&(\circ)
}\\
\xymatrix@C-30pt{
(\bullet&\bullet&\bullet)&(\circ&\circ)&(\bullet&\bullet&\bullet&\bullet)&(\bullet&\bullet&\bullet&\bullet&\bullet)&(\circ&\circ&\circ)&(\circ)
} \leq \xymatrix@C-30pt{
(\bullet&\bullet&\bullet)&(\circ&\circ)&(\circ&\circ&\circ&\circ)&(\circ&\circ&\circ&\circ&\circ)&(\circ&\circ&\circ)&(\circ)
}\\
\end{array}
\]
From a biological point of view, this type of cone could be used to specify which codon or gene (depending on the scale) is separated from the others during homologous recombination.  

On the other hand, the class of $0$-distributive cones in $\mathbf{Seg}(\Omega:t)$ contains all the cones of $\mathbf{Seg}(\Omega:t)$ while the class of exactly $0$-distributive cones in $\mathbf{Seg}(\Omega:t)$ is equal to the set of identities (seen as one-arrow cones) in $\mathbf{Seg}(\Omega:t)$.
\end{example}

\begin{example}[Quasi-homologous cones]\label{exa:Quasi-homologous_cones}
Let $\Omega$ denote the Boolean pre-ordered set $\{0 \leq 1\}$. In this example, we give 1-distributive and exactly 1-distributive cones in one of the categories of quasi-homologous segments of $\mathbf{Seg}(\Omega)$. For instance, the three (obvious) arrows given below (in which the bracketed elements $(\bullet)$ and $(\circ)$ have been shortened, on the left side, to the symbols $\bullet$ and $\circ$, respectively) form a single 1-distributive cone in $\mathbf{Seg}(\Omega\,|\,18)$.
\[
\begin{array}{l}
\xymatrix@C-30pt{
\bullet&\bullet&\bullet&\circ&\circ&\bullet&\bullet&\bullet&\bullet&\bullet&\bullet&\bullet&\bullet&\bullet&\circ&\circ&\circ&\circ
} \leq \xymatrix@C-30pt{
(\circ&\circ)&(\circ)&(\circ&\circ)&(\bullet&\bullet&\bullet&\bullet)&(\circ&\circ&\circ)&(\circ&\circ)&(\circ&\circ&\circ)&(\circ)
}\\
\xymatrix@C-30pt{
\bullet&\bullet&\bullet&\circ&\circ&\bullet&\bullet&\bullet&\bullet&\bullet&\bullet&\bullet&\bullet&\bullet&\circ&\circ&\circ&\circ
}\leq \xymatrix@C-30pt{
(\circ&\circ&\circ)&(\circ&\circ)&(\bullet&\bullet&\bullet&\bullet)&(\bullet&\bullet&\bullet&\bullet&\bullet)&(\circ&\circ&\circ)&(\circ)
}\\
\xymatrix@C-30pt{
\bullet&\bullet&\bullet&\circ&\circ&\bullet&\bullet&\bullet&\bullet&\bullet&\bullet&\bullet&\bullet&\bullet&\circ&\circ&\circ&\circ
} \leq \xymatrix@C-30pt{
(\bullet&\bullet&\bullet)&(\circ&\circ)&(\circ&\circ&\circ&\circ)&(\bullet&\bullet&\bullet&\bullet&\bullet)&(\circ&\circ&\circ)&(\circ)
}\\
\end{array}
\]
For their part, exactly 1-distributive cones cannot have common black patches that are not related via their diagram $A$. Below is an example of such a cone whose diagram is discrete.
\[
\begin{array}{l}
\xymatrix@C-30pt{
\bullet&\bullet&\bullet&\circ&\circ&\bullet&\bullet&\bullet&\bullet&\bullet&\bullet&\bullet&\bullet&\bullet&\circ&\circ&\circ&\circ
} \leq \xymatrix@C-30pt{
(\circ&\circ)&(\circ)&(\circ&\circ)&(\bullet&\bullet&\bullet&\bullet)&(\circ&\circ&\circ)&(\circ&\circ)&(\circ&\circ&\circ)&(\circ)
}\\
\xymatrix@C-30pt{
\bullet&\bullet&\bullet&\circ&\circ&\bullet&\bullet&\bullet&\bullet&\bullet&\bullet&\bullet&\bullet&\bullet&\circ&\circ&\circ&\circ
} \leq \xymatrix@C-30pt{
(\circ&\circ&\circ)&(\circ&\circ)&(\circ&\circ&\circ&\circ)&(\bullet&\bullet&\bullet&\bullet&\bullet)&(\circ&\circ&\circ)&(\circ)
}\\
\xymatrix@C-30pt{
\bullet&\bullet&\bullet&\circ&\circ&\bullet&\bullet&\bullet&\bullet&\bullet&\bullet&\bullet&\bullet&\bullet&\circ&\circ&\circ&\circ
} \leq \xymatrix@C-30pt{
(\bullet&\bullet&\bullet)&(\circ&\circ)&(\circ&\circ)&(\circ&\circ)&(\circ&\circ&\circ&\circ&\circ)&(\circ&\circ&\circ)&(\circ)
}\\
\end{array}
\]
The difference between the cones given in Example \ref{exa:Homologous_cones} and those given above is that the ones given above specify operations that act on sequences that are, \emph{a priori}, not equipped with any particular topology. See the difference in this following example. Suppose that $\mathsf{E}_0$ denotes an enzyme that is to cut a DNA sequence codon by codon. Either one feeds $\mathsf{E}_0$ with a sequence whose topology is already specified, say $(\mathtt{A}\mathtt{T}\mathtt{C})(\mathtt{G}\mathtt{A})$, and $\mathsf{E}_0$ returns $\mathtt{A}\mathtt{T}\mathtt{C}$ and $\mathtt{G}\mathtt{A}$ or one feeds $\mathsf{E}_0$ with a sequence of the form $\mathtt{A}\mathtt{T}\mathtt{C}\mathtt{G}\mathtt{A}$ and only the specification of a particular cone would allow us to know what $\mathsf{E}_0$ returns.
\end{example}

\begin{definition}[Distributive chromologies]\label{def:distributive_chromologies}
For every element $b \in \Omega$, a chromology $(\Omega,D)$ will be said to be \emph{$b$-distributive} if all its cones in $D$ are $b$-distributive.
\end{definition}

\begin{definition}[Exactly distributive chromologies]\label{def:exactly_distributive_chromologies}
For every element $b \in \Omega$, a chromology $(\Omega,D)$ will be said to be \emph{exactly $b$-distributive} if all its cones in $D$ are exactly $b$-distributive.
\end{definition}

\subsection{Logical systems for pedigrads in sets}
In this section, we show that the functor defined in Definition \ref{def:set_E_b_varepsilon} can be seen as a pedigrad for a certain logical system in $\mathbf{Set}$.

\begin{definition}[Logical systems of monomorphisms]\label{def:logical_systems_cones_mono_limits}
We will denote by $\mathcal{W}^{\textrm{inj}}$ the set of cones $\Delta_{A}(X) \Rightarrow F$ in $\mathbf{Set}$ whose limit adjoints $X \to \mathsf{lim}_{A}F$ are injections.
\end{definition}

\begin{proposition}\label{prop:mono-pedigrad_in_set}
For every element $b$ in $\Omega$ and $b$-distributive chromology $(\Omega,D)$, the functor $E_{b}^{\varepsilon}:\mathbf{Seg}(\Omega) \to \mathbf{Set}$ is a $\mathcal{W}^{\textrm{inj}}$-pedigrad for $(\Omega,D)$.
\end{proposition}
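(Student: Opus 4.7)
The plan is to unpack what being a $\mathcal{W}^{\textrm{inj}}$-pedigrad means and verify the condition one cone at a time. Fix a non-negative integer $n$ and a cone $\rho : \Delta_{A}(\tau) \Rightarrow \theta$ in $D[n]$, so that $\rho$ lives in the subcategory $\mathbf{Seg}(\Omega\,|\,n) \subseteq \mathbf{Seg}(\Omega)$. The goal is to show that the limit adjoint of the image cone $E_{b}^{\varepsilon}(\rho)$ in $\mathbf{Set}$ is an injection.

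First, I would invoke Proposition \ref{prop:pedigrad_representable} to identify, up to a natural isomorphism, the restriction of $E_{b}^{\varepsilon}$ to $\mathbf{Seg}(\Omega\,|\,n)$ with the contravariant composite $\mathbf{Set}(\mathsf{Tr}_b(\_), E)$. Under this identification, $E_{b}^{\varepsilon}(\rho)$ becomes the cone whose components are $\mathbf{Set}(\mathsf{Tr}_b(\rho_a), E)$ for $a \in A$. Since $\mathbf{Set}(\_, E)$ sends colimits to limits, the standard hom-colim formula produces an isomorphism $\mathsf{lim}_{A}\mathbf{Set}(\mathsf{Tr}_b\theta(\_), E) \cong \mathbf{Set}(\mathsf{colim}_{A}\mathsf{Tr}_b\theta, E)$, under which the limit adjoint of $E_{b}^{\varepsilon}(\rho)$ is exactly $\mathbf{Set}(\phi, E)$, where $\phi : \mathsf{colim}_{A}\mathsf{Tr}_b\theta \to \mathsf{Tr}_b(\tau)$ is the colimit adjoint of the cocone $\mathsf{Tr}_b(\rho)$ appearing in (\ref{eq:epi_mono_factorization_cocone_for_definition_distributive_cones}).

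Next, I would feed in the $b$-distributivity hypothesis. In the epi-mono factorization $\phi = m \circ e$ of (\ref{eq:epi_mono_factorization_cocone_for_definition_distributive_cones}), the factor $m$ is, by construction, the inclusion of the union $\bigcup_{a \in A}\mathsf{Tr}_b\theta(a)$ into $\mathsf{Tr}_b(\tau)$, while $e$ is an epimorphism. Definition \ref{def:distributive_cones}, together with the assumption that $(\Omega,D)$ is $b$-distributive, forces $m$ to also be an epimorphism, hence a bijection of subsets of $\mathsf{Tr}_b(\tau)$, i.e.\ the identity. Consequently $\phi = e$ is a composition of epimorphisms and is itself a surjection in $\mathbf{Set}$.

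Finally, the contravariant hom-functor $\mathbf{Set}(\_, E)$ takes epimorphisms to monomorphisms, so $\mathbf{Set}(\phi, E)$ is an injection. Since this map is the limit adjoint of $E_{b}^{\varepsilon}(\rho)$, the cone $E_{b}^{\varepsilon}(\rho)$ belongs to $\mathcal{W}^{\textrm{inj}}$, which is precisely what is required. The only real obstacle is the bookkeeping that identifies the limit adjoint of $E_{b}^{\varepsilon}(\rho)$ with $\mathbf{Set}(\phi, E)$ via Proposition \ref{prop:pedigrad_representable} and the hom-colim duality; once that identification is made explicit, the $b$-distributivity hypothesis delivers the conclusion immediately.
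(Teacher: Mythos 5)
Your proof is correct and follows essentially the same route as the paper's own argument: both reduce to showing that the colimit adjoint $\mathsf{colim}_A\mathsf{Tr}_b\theta \to \mathsf{Tr}_b(\tau)$ is an epimorphism via $b$-distributivity, and then transport this through $\mathbf{Set}(\_,E)$ and Proposition \ref{prop:pedigrad_representable} to identify the resulting injection with the limit adjoint $E_{b}^{\varepsilon}(\tau) \to \mathsf{lim}_A E_{b}^{\varepsilon}\circ\theta$. Yours simply spells out the epi-mono factorization and the hom-colim identification that the paper leaves implicit.
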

\begin{proof}
Let $\rho:\Delta_{A}(\tau) \Rightarrow \theta$ be a cone in $D[n_1]$ for some given non-negative integer $n_1$. By assumption on $(\Omega,D)$ and Definition \ref{def:distributive_cones}, the canonical arrow
\[
\mathsf{colim}_A\mathsf{Tr}_b\theta \to \mathsf{Tr}_b(\tau)
\]
must be an epimorphism, so that its image via the functor $\mathbf{Set}(\_,E):\mathbf{Set}^{\mathrm{op}} \to \mathbf{Set}$ is an injection. By Proposition \ref{prop:pedigrad_representable} and the usual definition of colimits in $\mathbf{Set}$, the resulting injection is (naturally) isomorphic to the following canonical arrow.
\[
E_{b}^{\varepsilon}(\tau) \to \mathsf{lim}_A E_{b}^{\varepsilon}\circ \theta
\]
This precisely shows that $E_{b}^{\varepsilon}:\mathbf{Seg}(\Omega) \to \mathbf{Set}$ is a $\mathcal{W}^{\textrm{inj}}$-pedigrad for $(\Omega,D)$.
\end{proof}

\begin{definition}[Sub-functors]
Recall that a \emph{sub-functor} of a functor $G:D \to \mathbf{Set}$ is a functor $F:D \to \mathbf{Set}$ such that 
\begin{itemize}
\item[1)] for every object $d$ in $D$, the set $F(d)$ is a subset in $G(d)$;
\item[2)] for every morphism $f:d \to d'$ in $D$, the function $F(f):F(d) \to F(d')$ makes the following diagram commute.
\[
\xymatrix{
F(d)\ar[d]_{F(f)}\ar[r]^{\subseteq}&G(d)\ar[d]^{G(f)}\\
F(d')\ar[r]_{\subseteq}&G(d')
}
\]
\end{itemize}
In this case, we will write $F \subseteq G$ to mean that $F$ is a sub-functor of $G$.
\end{definition}

\begin{remark}[Sub-functors are pedigrads]
Every sub-functor $F \subseteq E_{b}^{\varepsilon}$ is a $\mathcal{W}^{\textrm{inj}}$-pedigrad for any $b$-distributive chromology $(\Omega,D)$. This follows from the commutativity of the following diagram for every cone $\rho:\Delta_{A}(\tau) \Rightarrow \theta$ in $D$, which forces the left-most vertical arrow to be an injection.
\[
\xymatrix@C-18pt{
F(\tau)\ar[d]\ar[r]^-{\subseteq}&E_{b}^{\varepsilon}(\tau)\ar[d]^{\mathrm{inj.}}\\
*+!R(.5){\mathsf{lim}_AF \circ \theta}\ar[r]&*+!L(.6){\mathsf{lim}_AE_{b}^{\varepsilon}\circ \theta}
}
\]
\end{remark}

\begin{example}[Living beings]\label{exa:Living_beings_mono}
Let $(\Omega,\preceq)$ be the Boolean pre-ordered set $\{0 \leq 1 \}$ and $(E,\varepsilon)$ be the pointed set $\{\mathtt{A},\mathtt{C},\mathtt{G},\mathtt{T},\varepsilon\}$. We will suppose that $\Omega$ is equipped with a 1-distributive chromology structure $(\Omega,D)$. The sub-functors of $E_{1}^{\varepsilon}:\mathbf{Seg}(\Omega) \to \mathbf{Set}$ could be seen as structures containing the genomes of living beings whose genetic codes are encoded with DNA. Precisely, the fact that the cones of $D$ are sent to 
$\mathcal{W}^{\textrm{inj}}$ means that the DNA strands are \emph{uniquely determined} by the patches living in the codomains of the arrows defining the cones of $D$.
\[
\xymatrix@R-10pt{
&\mathtt{G}\mathtt{G}\mathtt{A}\mathtt{T}\mathtt{A}\mathtt{C}\mathtt{C}\mathtt{G}\mathtt{A}\mathtt{T}\mathtt{T}\mathtt{A}\ar@/_1pc/[ld]\ar[d]\ar@/^1pc/[rd]&\\
(\!-\!-\!-\!)(\mathtt{T}\mathtt{A}\mathtt{C})(\!-\!-\!-\!-\!--\!)&(\mathtt{G}\mathtt{G}\mathtt{A})(\!-\!-\!-\!)(\!-\!-\!-\!-\!--\!)&(\!-\!-\!-\!)(\!-\!-\!-\!)(\mathtt{C}\mathtt{G}\mathtt{A}\mathtt{T}\mathtt{T}\mathtt{A})
}
\]
Note that if two patches $S_1$ and $S_2$ live in a subfunctor $F \subseteq E_{1}^{\varepsilon}$, then their concatenation $S_1 \cdot S_2$ might not exist in the sub-functor $F$ (this makes sense with the fact that if two genes are present in an individual $X$, then the cutting and re-gluing of their internal patches might not exist in $X$). However, the ability of concatenating segments can turn out to be useful if one wants to deal with homologous recombination or even CRISPR. To be able to concatenate DNA strands, we will need the type of pedigrad induced by the logical systems defined in Definition \ref{def:logical_systems_cones_iso_limits}.
\end{example}

\begin{definition}[Logical systems of bijections]\label{def:logical_systems_cones_iso_limits}
We will denote by $\mathcal{W}^{\textrm{bij}}$ the set of cones $\Delta_{A}(X) \Rightarrow F$ in $\mathbf{Set}$ whose limit adjoints $X \to \mathsf{lim}_{A}F$ are bijections.
\end{definition}

\begin{proposition}\label{prop:E_b_varepsilon_W_iso_pedigrad_exactly_distributive}
For every element $b$ in $\Omega$ and exactly $b$-distributive chromology $(\Omega,D)$, the functor $E_{b}^{\varepsilon}:\mathbf{Seg}(\Omega) \to \mathbf{Set}$ is a $\mathcal{W}^{\textrm{bij}}$-pedigrad for $(\Omega,D)$.
\end{proposition}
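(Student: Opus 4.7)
The plan is to imitate the argument used for Proposition \ref{prop:mono-pedigrad_in_set}, but to upgrade the conclusion from ``injection'' to ``bijection'' by using the extra monomorphism provided by exact $b$-distributivity. Concretely, I would fix a cone $\rho:\Delta_{A}(\tau) \Rightarrow \theta$ in $D[n_1]$ and look at the epi-mono factorization (\ref{eq:epi_mono_factorization_cocone_for_definition_distributive_cones}) of the colimit adjoint of $\mathsf{Tr}_b(\rho)$ in $\mathbf{Set}$:
\[
\mathsf{colim}_A\mathsf{Tr}_b\theta \xrightarrow{\ e\ }\bigcup_{a \in A}\mathsf{Tr}_b\theta(a)\xrightarrow{\ m\ } \mathsf{Tr}_b(\tau).
\]
By Definition \ref{def:exactly_distributive_cones}, exact $b$-distributivity means $m$ is an epimorphism (hence an identity of sets) \emph{and} $e$ is a monomorphism (hence a bijection onto the image). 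Composing, the canonical arrow $\mathsf{colim}_A\mathsf{Tr}_b\theta \to \mathsf{Tr}_b(\tau)$ is a bijection in $\mathbf{Set}$.

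Next I would apply the contravariant hom-functor $\mathbf{Set}(\_,E)$. Since $\mathbf{Set}(\_,E)$ sends bijections to bijections, the induced map
\[
\mathbf{Set}(\mathsf{Tr}_b(\tau),E) \to \mathbf{Set}(\mathsf{colim}_A\mathsf{Tr}_b\theta,E)
\]
is a bijection. Using the standard fact that $\mathbf{Set}(\_,E)$ turns colimits into limits, the right-hand side is naturally isomorphic to $\mathsf{lim}_A\,\mathbf{Set}(\mathsf{Tr}_b\theta(\_),E)$, and by Proposition \ref{prop:pedigrad_representable} this limit is naturally isomorphic to $\mathsf{lim}_A\,E_b^{\varepsilon}\circ \theta$, while the left-hand side is naturally isomorphic to $E_b^{\varepsilon}(\tau)$. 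Unwinding the naturality (exactly as in the proof of Proposition \ref{prop:mono-pedigrad_in_set}) identifies the resulting bijection with the canonical limit-comparison arrow
\[
E_b^{\varepsilon}(\tau) \to \mathsf{lim}_A\,E_b^{\varepsilon}\circ \theta,
\]
so the cone $E_b^{\varepsilon}\circ \rho$ lies in $\mathcal{W}^{\textrm{bij}}$, which is what we need.

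I do not anticipate a serious obstacle; the only place that requires a little care is the last identification, where one must verify that the bijection produced by applying $\mathbf{Set}(\_,E)$ to the epi-mono factorization is \emph{the same} as the canonical limit comparison — this is a naturality check that is essentially forced by the definitions and the natural isomorphisms in Proposition \ref{prop:pedigrad_representable}. Everything else is a routine combination of the exact $b$-distributivity hypothesis with the fact that $\mathbf{Set}(\_,E)$ converts colimits to limits.
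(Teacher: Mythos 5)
Your proposal is correct and follows essentially the same route as the paper's own proof: exact $b$-distributivity makes the canonical arrow $\mathsf{colim}_A\mathsf{Tr}_b\theta \to \mathsf{Tr}_b(\tau)$ a bijection, and applying $\mathbf{Set}(\_,E)$ together with Proposition \ref{prop:pedigrad_representable} identifies its image with the limit comparison $E_b^{\varepsilon}(\tau) \to \mathsf{lim}_A E_b^{\varepsilon}\circ\theta$. The only difference is that you spell out the epi-mono factorization and the final naturality check a bit more explicitly than the paper does.
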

\begin{proof}
Let $\rho:\Delta_{A}(\tau) \Rightarrow \theta$ be a cone in $D[n_1]$ for some given non-negative integer $n_1$. By assumption on $(\Omega,D)$ and Definitions \ref{def:distributive_cones} \& \ref{def:exactly_distributive_cones}, the canonical arrow
\[
\mathsf{colim}_A\mathsf{Tr}_b\theta \to \mathsf{Tr}_b(\tau)
\]
must be both an epimorphism and a monomorphism (and, in fact, an isomorphism), so that its image via the functor $\mathbf{Set}(\_,E):\mathbf{Set}^{\mathrm{op}} \to \mathbf{Set}$ is a bijection. By Proposition \ref{prop:pedigrad_representable} and the usual definition of colimits in $\mathbf{Set}$, the resulting bijection is (naturally) isomorphic to the following canonical arrow.
\[
E_{b}^{\varepsilon}(\tau) \to \mathsf{lim}_A E_{b}^{\varepsilon}\circ \theta
\]
This precisely shows that $E_{b}^{\varepsilon}:\mathbf{Seg}(\Omega) \to \mathbf{Set}$ is a $\mathcal{W}^{\textrm{bij}}$-pedigrad for $(\Omega,D)$.
\end{proof}

\begin{example}[Algebraic ring of DNA]
The idea behind $\mathcal{W}^{\textrm{bij}}$-pedigrads is that they allow one to model all those operations that an engineer might want to do (such as homologous recombination (see section \ref{sec:Pedigrads_in_semimodules_over_semi-rings}), CRISPR \cite{Pennisi} (see Example \ref{exa:CRISPR_iso_pedigrad} below), DNA sequencing, alignment methods, etc.), but that might not exist in a living being. Therefore $\mathcal{W}^{\textrm{bij}}$-pedigrads should be seen as universes in which it is possible to think about DNA, instead of being seen as models containing the genome of a particular set of living beings.
\end{example}

\begin{example}[Duplication]\label{exa:duplication_iso_pedigrad}
Let $(\Omega,\preceq)$ be the Boolean pre-ordered set $\{0 \leq 1 \}$ and $(E,\varepsilon)$ be the pointed set $\{\mathtt{A},\mathtt{C},\mathtt{G},\mathtt{T},\varepsilon\}$. In this example, we illustrate how flexible $\mathcal{W}^{\textrm{bij}}$-pedigrads are by constructing what one could see as a duplication mutation, which is a type of mutation that is responsible for triggering certain cancers \cite{ReamsRoth}. To do so, first note that the category $\mathbf{Seg}(\Omega)$ contains two morphisms as follows (in which the labeled elements $\bullet$ specify how the domain is sent to the codomain).
\[
\xymatrix@C-30pt@R-15pt{
(\mathop{\bullet}\limits^1&\mathop{\bullet}\limits^2&\mathop{\bullet}\limits^3)\ar[rr]^{f_1}&\quad\quad&(\mathop{\bullet}\limits^1&\mathop{\bullet}\limits^2&\mathop{\bullet}\limits^3)&(\circ&\circ&\circ)
}
\quad\quad\quad
\xymatrix@C-30pt@R-15pt{
(\mathop{\bullet}\limits^1&\mathop{\bullet}\limits^2&\mathop{\bullet}\limits^3)\ar[rr]^{f_2}&\quad\quad&(\circ&\circ&\circ)&(\mathop{\bullet}\limits^1&\mathop{\bullet}\limits^2&\mathop{\bullet}\limits^3)
}
\]
The images of $f_1$ and $f_2$ via the functor $E_1^{\varepsilon}$ here turn out to be identities: they map any word to the same copy of that word; e.g. $\mathtt{A}\mathtt{T}\mathtt{G}\mapsto \mathtt{A}\mathtt{T}\mathtt{G}$. Now, since the functor $E_1^{\varepsilon}$ is a $\mathcal{W}^{\textrm{bij}}$-pedigrad, the image of the obvious exactly 1-distributive cone
\begin{equation}\label{eq:cone_duplication_example}
\begin{array}{c}
\xymatrix@C-30pt@R-30pt{
&&&&&&&(\bullet&\bullet&\bullet)&(\circ&\circ&\circ)\\
(\bullet&\bullet&\bullet)&(\bullet&\bullet&\bullet)\ar[rru]\ar[rrd]&\quad\quad\quad&&&&&&\\
&&&&&&&(\circ&\circ&\circ)&(\bullet&\bullet&\bullet)
}
\end{array}
\end{equation}
via the functor $E_1^{\varepsilon}$ is associated with a concatenation operation of the following form (this is the inverse of the limit adjoint of the image of cone (\ref{eq:cone_duplication_example}) via the functor $E_1^{\varepsilon}$).
\[
\mu:E_1^{\varepsilon}(\!\!\xymatrix@C-30pt{(\bullet&\bullet&\bullet)&(\circ&\circ&\circ)}\!\!)\times E_1^{\varepsilon}(\!\!\xymatrix@C-30pt{(\circ&\circ&\circ)&(\bullet&\bullet&\bullet)}\!\!) \longrightarrow E_1^{\varepsilon}(\!\!\xymatrix@C-30pt{(\bullet&\bullet&\bullet)&(\bullet&\bullet&\bullet)}\!\!)
\]
Here, by concatenation, we mean a map that concatenates words; e.g.
the pair of words $(\mathtt{A}\mathtt{T}\mathtt{G},\mathtt{C}\mathtt{G}\mathtt{G})$ will be sent to the word $\mathtt{A}\mathtt{T}\mathtt{G}\mathtt{C}\mathtt{G}\mathtt{G}$. The post-composition of the morphism $\mu$ with the pairing of morphisms $(E_1^{\varepsilon}(f_1), E_1^{\varepsilon}(f_2))$ then gives rise to a map of the type given below that duplicates any word contained in its domain; e.g. $\mathtt{A}\mathtt{T}\mathtt{G} \mapsto \mathtt{A}\mathtt{T}\mathtt{G}\mathtt{A}\mathtt{T}\mathtt{G}$.
\[
E_1^{\varepsilon}(\!\!\xymatrix@C-30pt{(\bullet&\bullet&\bullet)}\!\!)\to E_1^{\varepsilon}(\!\!\xymatrix@C-30pt{(\bullet&\bullet&\bullet)&(\bullet&\bullet&\bullet)}\!\!)
\]
\end{example}

\begin{example}[CRISPR]\label{exa:CRISPR_iso_pedigrad}
Let $(\Omega,\preceq)$ be the Boolean pre-ordered set $\{0 \leq 1 \}$ and $(E,\varepsilon)$ be the pointed set $\{\mathtt{A},\mathtt{C},\mathtt{G},\mathtt{T},\varepsilon\}$. In this example, we illustrate how $\mathcal{W}^{\textrm{bij}}$-pedigrads can be used to model the CRISPR technology. Recall that CRISPR is a tool that allows one to edit a DNA segment. For illustration,  let us take the segment $\mathtt{ATCGTC}$, which is supposed to live the set given below, on the left.
\[
E_1^{\varepsilon}(\!\!\xymatrix@C-30pt{(\bullet&\bullet)&(\bullet&\bullet&\bullet)&(\bullet)}\!\!)\quad\quad\quad\quad\quad E_1^{\varepsilon}(\!\!\xymatrix@C-30pt{(\circ&\circ)&(\bullet&\bullet&\bullet)&(\circ)}\!\!)
\]
Now, suppose that we would like to replace the patch $\mathtt{CGT}$ of our segment with the sequence $\mathtt{TTC}$, which lives in the set given above, on the right. To do so, we would need an algebraic operation that first cuts the patch that one wants to replace and then use a concatenation operation to insert $\mathtt{TTC}$ in the missing (or crossed out) part of $\mathtt{AT}$(\st{$\mathtt{CGT}$})$\mathtt{C}$. In more categorical terms, we would need to use the arrow resulting from the composition of the pair of arrows
\[
\begin{array}{clc}
E_1^{\varepsilon}(\!\!\xymatrix@C-30pt{(\bullet&\bullet)&(\bullet&\bullet&\bullet)&(\bullet)}\!\!) \times E_1^{\varepsilon}(\!\!\xymatrix@C-30pt{(\circ&\circ)&(\bullet&\bullet&\bullet)&(\circ)}\!\!) & \longrightarrow & E_1^{\varepsilon}(\!\!\xymatrix@C-30pt{(\bullet&\bullet)&(\circ&\circ&\circ)&(\bullet)}\!\!) \times E_1^{\varepsilon}(\!\!\xymatrix@C-30pt{(\circ&\circ)&(\bullet&\bullet&\bullet)&(\circ)}\!\!)\\\vspace{-18pt}
\\\vspace{-4pt}
&&\xymatrix@R-7pt{\ar[d]^{\mu}\\~}\\
 &  & E_1^{\varepsilon}(\!\!\xymatrix@C-30pt{(\bullet&\bullet&\bullet&\bullet&\bullet&\bullet)}\!\!) 
\end{array}
\]
where the horinzontal arrow, at the top, is that induced by the obvious arrow of $\mathbf{Seg}(\Omega)$ that turns three black nodes into white ones and where the vertical arrow, on the right, is the inverse of the limit adjoint of the image of the exactly 1-distributive cone given below.
\[
\xymatrix@C-30pt@R-30pt{
&&&&&&&(\bullet&\bullet)&(\circ&\circ&\circ)&(\bullet)\\
(\bullet&\bullet&\bullet&\bullet&\bullet&\bullet)\ar[rru]\ar[rrd]&\quad\quad\quad&&&&&&\\
&&&&&&&(\circ&\circ)&(\bullet&\bullet&\bullet)&(\circ)
}
\]
On can check that the image of the pair $(\mathtt{ATCGTC},\mathtt{TTC})$ through the resulting composition is the segment $\mathtt{ATTTCC}$.
\end{example}

\subsection{Morphisms of pedigrads}
According to section \ref{ssec:Morphisms_of_pedigrads}, a morphism between two pedigrads is a natural transformation between the underlying functors of the pedigrads. The goal of this section is to show how morphisms of pedigrads can be used to model transcription and mutation processes. We shall mainly use Remark \ref{rem:morhisms_of_pedigrads_in_set}.

\begin{remark}\label{rem:morhisms_of_pedigrads_in_set}
Let $(\Omega,\preceq)$ be a pre-ordered set and $b$ be an element in $\Omega$. For every map of pointed sets $f:(A,\alpha) \to (B,\beta)$ in $\mathbf{Set}_{\ast}$, there is a natural transformation $f_b^{\ast}:A_b^{\alpha} \Rightarrow B_b^{\beta}$ given by the evaluation of the bifunctor $\mathbf{Set}_{\ast}(\mathsf{Tr}_b^{\ast}(\_),\_)$ at $f$ on the second variable.
\[
\mathbf{Set}_{\ast}(\mathsf{Tr}_b^{\ast}(\_),f):\mathbf{Set}_{\ast}(\mathsf{Tr}_b^{\ast}(\_),(A,\alpha)) \Rightarrow \mathbf{Set}_{\ast}(\mathsf{Tr}_b^{\ast}(\_),(B,\beta))
\]
The naturality of the resulting transformation $f_b^{\ast}:A_b^{\alpha} \Rightarrow B_b^{\beta}$ directly follows from the functoriality of the bifunctor in the first variable.
\end{remark}

\begin{example}[Transcription]\label{exa:Transcription_in_set}
For convenience, let $(\Omega,\preceq)$ be the Boolean pre-ordered set $\{0 \leq 1\}$ and take $b$ to be $1 \in \Omega$. If we denote by $(A,\varepsilon)$ and $(B,\varepsilon)$ the pointed sets given by the alphabets $\{\mathtt{A},\mathtt{C},\mathtt{G},\mathtt{T},\varepsilon\}$ and $\{\mathtt{A},\mathtt{C},\mathtt{G},\mathtt{U},\varepsilon\}$, respectively, then we can construct an isomorphism $f:(A,\varepsilon) \to (B,\varepsilon)$ in $\mathbf{Set}_{\ast}$ by considering the mappings $\mathtt{A} \mapsto \mathtt{U}$; $\mathtt{T} \mapsto \mathtt{A}$; $\mathtt{G} \mapsto \mathtt{C}$; $\mathtt{C} \mapsto \mathtt{G}$. The morphism of pedigrads $f_b^{\ast}:A_b^{\varepsilon} \Rightarrow B_b^{\varepsilon}$ resulting from Remark \ref{rem:morhisms_of_pedigrads_in_set} then sends any words of the form given below, on the left, to the corresponding words, on the right.
\[
\begin{array}{ccc}
A_b^{\varepsilon}(\!\!\xymatrix@C-30pt{(\bullet&\bullet&\bullet)&(\bullet&\bullet&\bullet)&(\bullet&\bullet&\bullet)}\!\!) &\to &B_b^{\varepsilon}(\!\!\xymatrix@C-30pt{(\bullet&\bullet&\bullet)&(\bullet&\bullet&\bullet)&(\bullet&\bullet&\bullet)}\!\!)\\
\mathtt{T}\mathtt{G}\mathtt{T}\mathtt{A}\mathtt{G}\mathtt{T}\mathtt{A}\mathtt{G}\mathtt{C}&\mapsto&\mathtt{A}\mathtt{C}\mathtt{A}\mathtt{U}\mathtt{C}\mathtt{A}\mathtt{U}\mathtt{C}\mathtt{G}\\
\mathtt{A}\mathtt{A}\mathtt{A}\mathtt{C}\mathtt{T}\mathtt{T}\mathtt{A}\mathtt{C}\mathtt{A}&\mapsto&\mathtt{U}\mathtt{U}\mathtt{U}\mathtt{G}\mathtt{A}\mathtt{A}\mathtt{U}\mathtt{G}\mathtt{U}\\
\end{array}
\]
As can be seen, this type of transformation models RNA transcription by sending every nucleobase to its RNA anti-nucleobase (where $\mathtt{U}$ (uracil) stands for the anti-nucleobase of adenine ($\mathtt{A}$)).
\end{example}

\begin{remark}[Coarse mutations]
As one can imagine, the type of morphism constructed in Remark \ref{rem:morhisms_of_pedigrads_in_set} can be used to represent DNA mutations. However, note that if one restricts oneself to morphisms of pointed sets of the form $(E,\varepsilon) \to (E,\varepsilon)$ where $(E,\varepsilon)$ is our usual pointed set $\{\mathtt{A},\mathtt{C},\mathtt{G},\mathtt{T},\varepsilon\}$, then these mutations will obviously be too systematic to be realistic. For instance, see the example given below for the mappings $\mathtt{A} \mapsto \varepsilon$; $\mathtt{T} \mapsto \mathtt{T}$; $\mathtt{G} \mapsto \mathtt{G}$; $\mathtt{C} \mapsto \mathtt{A}$, which delete any adenine ($\mathtt{A}$) and substitute any cytosine ($\mathtt{C}$) with an adenine ($\mathtt{A}$).
\[
\begin{array}{ccc}
E_1^{\varepsilon}(\!\!\xymatrix@C-30pt{(\bullet&\bullet&\bullet)&(\bullet&\bullet&\bullet)&(\bullet&\bullet&\bullet)}\!\!) &\to &E_1^{\varepsilon}(\!\!\xymatrix@C-30pt{(\bullet&\bullet&\bullet)&(\bullet&\bullet&\bullet)&(\bullet&\bullet&\bullet)}\!\!)\\
\mathtt{A}\mathtt{G}\mathtt{C}\mathtt{A}\mathtt{G}\mathtt{T}\mathtt{A}\mathtt{G}\mathtt{C}&\mapsto&\varepsilon\mathtt{G}\mathtt{A}\varepsilon\mathtt{G}\mathtt{T}\varepsilon\mathtt{G}\mathtt{A}\\
\mathtt{T}\mathtt{A}\mathtt{A}\mathtt{C}\mathtt{C}\mathtt{T}\mathtt{A}\mathtt{C}\mathtt{A}&\mapsto&\mathtt{T}\varepsilon\varepsilon\mathtt{A}\mathtt{A}\mathtt{T}\varepsilon\mathtt{A}\mathtt{A}\\
\end{array}
\]
Note that a parameterization of the alphabet could specify the context in which these mutations happen, which would make them more realistic (see Example \ref{exa:Mutations_in_set_are_spans}).

Finally, note that the definition of $E_b^{\varepsilon}$ forces us to always map the element $\varepsilon$ to itself, so that insertion mutations cannot directly be viewed as morphisms of the previous type. In fact, insertion mutations could be encoded as the fibers (i.e. lifts) of a morphism of the form described above. This idea is further discussed in Example \ref{exa:Mutations_in_set_are_spans} via the concept of span.
\end{remark}

\begin{example}[Mutations are spans]\label{exa:Mutations_in_set_are_spans}
The present example shows that mutations can be recovered from spans in the underlying category of pedigrads. Let $(\Omega,\preceq)$ be the Boolean pre-ordered set $\{0 \leq 1\}$ and take $b$ to be $1 \in \Omega$. Denote by $(E,\varepsilon)$ the pointed set given by the alphabet $\{\mathtt{A},\mathtt{C},\mathtt{G},\mathtt{T},\varepsilon\}$. Since $\mathbf{Set}_{\ast}$ is complete, we can form the Cartesian product of $(E,\varepsilon)$, which, for convenience, will be denoted as $(A,(\varepsilon,\varepsilon))$, where $A$ is the set of pairs of elements in $E$; e.g. $(\mathtt{A},\varepsilon)$, $(\mathtt{A},\mathtt{T})$, etc. By definition, we are given a span of projection maps $(A,(\varepsilon,\varepsilon)) \rightrightarrows (E,\varepsilon)$ in $\mathbf{Set}_*$ that forget the second and first components of the pairs in $A$, respectively. The corresponding span resulting from Remark \ref{rem:morhisms_of_pedigrads_in_set} can then be viewed as a binary relation describing all the possible ways a DNA strands can be mutated (the binary relation appears when reading the span of mappings from left to right, as shown below).
\[
\begin{array}{ccccc}
E_b^{\varepsilon}(\!\!\xymatrix@C-30pt{(\bullet&\bullet&\bullet)&(\bullet&\bullet&\bullet) &(\bullet&\bullet&\bullet)}\!\!)& \leftarrow &A_b^{(\varepsilon,\varepsilon)}(\!\!\xymatrix@C-30pt{(\bullet&\bullet&\bullet)&(\bullet&\bullet&\bullet)&(\bullet&\bullet&\bullet)}\!\!)&\rightarrow&E_b^{\varepsilon}(\!\!\xymatrix@C-30pt{(\bullet&\bullet&\bullet)&(\bullet&\bullet&\bullet)&(\bullet&\bullet&\bullet)}\!\!)\vspace{-9pt}\\
&&&&\\
\mathtt{T}\mathtt{G}\mathtt{C}\mathtt{A}\mathtt{G}\varepsilon\mathtt{A}\mathtt{G}\varepsilon&\rotatebox[origin=c]{180}{$\mapsto$}&\binom{\mathtt{T}}{\mathtt{T}}\binom{\mathtt{G}}{\mathtt{G}}\binom{\mathtt{C}}{\mathtt{C}}\binom{\mathtt{A}}{\mathtt{A}}\binom{\mathtt{G}}{\mathtt{G}}\binom{\varepsilon}{\mathtt{T}}\binom{\mathtt{A}}{\mathtt{A}}\binom{\mathtt{G}}{\mathtt{C}}\binom{\varepsilon}{\varepsilon}&\mapsto&\mathtt{T}\mathtt{G}\mathtt{C}\mathtt{A}\mathtt{G}\mathtt{T}\mathtt{A}\mathtt{C}\varepsilon\vspace{-4pt}\\
&&&&\\
\mathtt{T}\mathtt{G}\mathtt{C}\mathtt{A}\mathtt{G}\varepsilon\mathtt{A}\mathtt{G}\varepsilon&\rotatebox[origin=c]{180}{$\mapsto$}&\binom{\mathtt{T}}{\mathtt{A}}\binom{\mathtt{G}}{\varepsilon}\binom{\mathtt{C}}{\mathtt{C}}\binom{\mathtt{A}}{\varepsilon}\binom{\mathtt{G}}{\mathtt{G}}\binom{\varepsilon}{\mathtt{A}}\binom{\mathtt{A}}{\mathtt{A}}\binom{\mathtt{G}}{\mathtt{G}}\binom{\varepsilon}{\mathtt{C}}&\mapsto&\mathtt{A}\varepsilon\mathtt{C}\varepsilon\mathtt{G}\mathtt{A}\mathtt{A}\mathtt{G}\mathtt{C}\\
\end{array}
\]
Note that the previous mappings describe the three most common types of mutation, namely deletion, insertion and substitution mutations.
We will finally conclude by pointing out that another span $A \rightrightarrows E$ could have been chosen so that the mutations could have depended on some environmental contexts or other similar conditions.
\end{example}

\subsection{Inversible pedigrads}
Let $(\Omega,\preceq)$ be a pre-ordered set. The goal of this section is to show how one can use a category of pedigrads to model certain biological phenomena, such as the phenomenon of inversion. 

\begin{convention}[Notation]
For every non-negative integer $n$, we will denote by $\mathsf{rv}_n$ the function $[n] \to [n]$ sending every element $x \in [n]$ to the interger $n+1-x$ in $[n]$.
\end{convention}

\begin{example}
The following picture shows what the map $\mathsf{rv}_6:[6] \to [6]$ looks like.
\[
\xymatrix@C-18pt@R-15pt{
1\ar@{|->}[d]\ar@{}[r]|{\leq}&2\ar@{|->}[d]\ar@{}[r]|{\leq}&3\ar@{|->}[d]\ar@{}[r]|{\leq}&4\ar@{|->}[d]\ar@{}[r]|{\leq}&5\ar@{|->}[d]\ar@{}[r]|{\leq}&6\ar@{|->}[d]\\
6\ar@{}[r]|{\geq}&5\ar@{}[r]|{\geq}&4\ar@{}[r]|{\geq}&3\ar@{}[r]|{\geq}&2\ar@{}[r]|{\geq}&1
}
\]
This type of map reverses the order of the finite set on which it is defined.
\end{example}

\begin{remark}
For every non-negative integer $n$, the function $\mathsf{rv}_n:[n] \to [n]$ is an involution\footnote{i.e. a bijection that is its own inverse.}. 
\end{remark}

\begin{definition}[Inversion operation]
For every object $(t,c):[n_1] \multimap [n_0]$ in $\mathbf{Seg}(\Omega)$, we define the \emph{inversion} of $(t,c)$ as the segment of type $[n_1] \multimap [n_0]$, over $\Omega$, that consists of the order-preserving surjection $\mathsf{rv}_{n_0}\circ t \circ \mathsf{rv}_{n_1}:[n_1] \to [n_0]$ 
and the function $c \circ \mathsf{rv}_{n_0}:[n_0] \to \Omega$. The inversion  of $(t,c)$ will be denoted as $(t,c)^{\dagger}$.
\end{definition}

\begin{example}
Let $(\Omega,\preceq)$ be the Boolean pre-ordered set $\{0 \leq 1\}$. The inversion of the segment, over $\Omega$, displayed below, on the left, is shown on the right.
\[
(c,t)=\xymatrix@C-30pt{(\bullet&\bullet)&(\circ)&(\bullet&\bullet&\bullet)&(\bullet)&(\circ)&(\circ)}
\quad\quad\mapsto\quad\quad
(t,c)^{\dagger}=\xymatrix@C-30pt{(\circ)&(\circ)&(\bullet)&(\bullet&\bullet&\bullet)&(\circ)&(\bullet&\bullet)}
\]
\end{example}

\begin{convention}[Inversion functor]
The mapping $(t,c) \mapsto (t,c)^{\dagger}$ defines an obvious endo\-functor on $\mathbf{Seg}(\Omega)$ mapping a morphism of segments $(f_1,f_0)$ to the morphism of segments $(f_1^{\dagger},f_0^{\dagger})$ defined by the pair $(\mathsf{rv}_{n_1'} \circ f_1 \circ \mathsf{rv}_{n_1},\mathsf{rv}_{n_0'} \circ f_0 \circ \mathsf{rv}_{n_0})$, as shown below.
\[
\begin{array}{c}
\xymatrix{
[n_1]\ar@{->>}[r]^{t}\ar@{)->}[d]_{f_1}&[n_0]\ar[d]^{f_0}\\
[n_1']\ar@{->>}[r]^{t'}&[n_0']
}
\end{array}
\quad\quad\mapsto\quad\quad
\begin{array}{c}
\xymatrix{
[n_1]\ar@{)->}[d]_{f_1^{\dagger}}\ar[r]^{\mathsf{rv}_{n_1}}&[n_1]\ar@{->>}[r]^{t}\ar@{)->}[d]_{f_1}&[n_0]\ar[d]^{f_0}\ar[r]^{\mathsf{rv}_{n_0}}&[n_0]\ar[d]^{f_0^{\dagger}}\\
[n_1']\ar[r]_{\mathsf{rv}_{n_1'}}&[n_1']\ar@{->>}[r]^{t'}&[n_0']\ar[r]_{\mathsf{rv}_{n_0'}}&[n_0']
}
\end{array}
\]
This endofunctor will be denoted as $\mathsf{Inv}:\mathbf{Seg}(\Omega) \to \mathbf{Seg}(\Omega)$ and called the \emph{inversion functor} on $\mathbf{Seg}(\Omega)$.
\end{convention}

\begin{remark}[Involution]
The inversion functor on $\mathbf{Seg}(\Omega)$ is an involution functor and hence an isomorphism of categories.
\end{remark}

\begin{definition}[Inversible chromologies]
A chromology $(\Omega,D)$ will be said to be \emph{inversible} if the image of every cone in $D$ via the inversion functor is a cone in $D$.
\end{definition}

\begin{definition}[Inversible pedigrads]
Let $(\mathcal{C},\mathcal{W})$ be a logical system. A $\mathcal{W}$-pedigrad $X$ on an inversible chromology $(\Omega,D)$ will be said to be \emph{inversible} if it is equipped with an isomorphism $X \Rightarrow X \circ \mathsf{Inv}$ in $[\mathbf{Seg}(\Omega),\mathcal{C}]$.
\end{definition}

\begin{example}
Let $(\Omega,\preceq)$ be a the Boolean pre-ordered set $\{0 \leq 1\}$. For every element $b \in \Omega$, the functor $E_b^{\varepsilon}:\mathbf{Seg}(\Omega) \to \mathbf{Set}$ defines a inversible $\mathcal{W}^{\textrm{bij}}$-pedigrad on any inversible chromology when it is equipped with the natural transformation that maps every word $\mathtt{X}_1\mathtt{X}_2\dots\mathtt{X}_{n}$ in the domain to its inverse $\mathtt{X}_{n}\dots\mathtt{X}_2\mathtt{X}_{1}$ in the codomain.
\[
E_b^{\varepsilon}(t,c) \to E_b^{\varepsilon}\circ \mathsf{Inv}(t,c)
\]
For instance, we would have the following mappings in the case where $E$ is our usual pointed set $\{\mathtt{A},\mathtt{C},\mathtt{G},\mathtt{T},\varepsilon\}$ and $b$ is taken to be $1 \in \Omega$.
\[
\begin{array}{ccc}
E_1^{\varepsilon}(\!\!\xymatrix@C-30pt{(\bullet&\bullet)&(\bullet)&(\bullet&\bullet&\bullet)}\!\!) &\to &E_1^{\varepsilon}(\!\!\xymatrix@C-30pt{(\bullet&\bullet&\bullet)&(\bullet)&(\bullet&\bullet)}\!\!)\\
\mathtt{A}\mathtt{G}\mathtt{T}\mathtt{A}\mathtt{G}\mathtt{C}&\mapsto&\mathtt{C}\mathtt{G}\mathtt{A}\mathtt{T}\mathtt{G}\mathtt{A}\\
\mathtt{C}\mathtt{T}\mathtt{T}\mathtt{A}\mathtt{C}\mathtt{A}&\mapsto&\mathtt{A}\mathtt{C}\mathtt{A}\mathtt{T}\mathtt{T}\mathtt{C}\\
\end{array}
\quad\quad\quad
\begin{array}{ccc}
E_1^{\varepsilon}(\!\!\xymatrix@C-30pt{(\circ&\circ&\circ)&(\bullet)&(\circ&\circ)}\!\!) &\to &E_1^{\varepsilon}(\!\!\xymatrix@C-30pt{(\circ&\circ)&(\bullet)&(\circ&\circ&\circ)}\!\!)\\
\mathtt{A}&\mapsto&\mathtt{A}\\
\mathtt{C}&\mapsto&\mathtt{C}\\
\end{array}
\]

\end{example}

\begin{remark}[Inversible structure]
The inversible structure of an inversible $\mathcal{W}^{\textrm{bij}}$-pedigrad $X$ can be uniquely determined by the cones of the chromology $(\Omega,D)$ provided that $D$ contains enough cones. For instance, the components of the inversible structure defined at atomic patches (as shown below) will uniquely determine the entire inversible structure if these atomic patches are part of the cones in $D$ (see Example \ref{exa:Living_beings_mono})
\[
X(\!\!\xymatrix@C-30pt{\circ&\circ&\circ&\circ&\circ&\circ&\bullet&\circ&\circ}\!\!) \to X(\!\!\xymatrix@C-30pt{\circ&\circ&\bullet&\circ&\circ&\circ&\circ&\circ&\circ}\!\!)
\]
\end{remark}

\section{Pedigrads in semimodules over semi-rings}\label{sec:Pedigrads_in_semimodules_over_semi-rings}

In this section, we use our pedigrad $E_{b}^{\varepsilon}:\mathbf{Seg}(\Omega) \to \mathbf{Set}$ to generate a new functor in the category of semimodules over the Boolean semi-ring with two elements. We then consider a quotient of this functor to define a pedigrad that models homologous recombination and haplotypes.

A question one might want to ask at this point is why should one consider pedigrads in a category of semimodules specifically? The answer can be given in several points.
\vspace{1pt}

1) The first reason is that the tools of semimodule theory allow us to neatly capture the phenomenon of homologous recombination. When we look at the homologous recombination of two segments, say $\mathtt{x}\mathtt{x}\mathtt{a}\mathtt{x}\mathtt{x}\mathtt{b}\mathtt{x}$ and $\mathtt{x}\mathtt{x}\mathtt{A}\mathtt{x}\mathtt{x}\mathtt{B}\mathtt{x}$, we can observe that each one of these will be separated into two parts, say $\mathtt{x}\mathtt{x}\mathtt{a}\mathtt{x}$, $\mathtt{x}\mathtt{x}\mathtt{A}\mathtt{x}$ and $\mathtt{x}\mathtt{b}\mathtt{x}$, $\mathtt{x}\mathtt{B}\mathtt{x}$, so that the recombination of $\mathtt{x}\mathtt{x}\mathtt{a}\mathtt{x}\mathtt{x}\mathtt{b}\mathtt{x}$ and $\mathtt{x}\mathtt{x}\mathtt{A}\mathtt{x}\mathtt{x}\mathtt{B}\mathtt{x}$ is one of the following re-attachments.
\begin{equation}\label{eq:result_of_recombination}
\mathtt{x}\mathtt{x}\mathtt{a}\mathtt{x}\mathtt{x}\mathtt{b}\mathtt{x}\quad;\quad\quad \mathtt{x}\mathtt{x}\mathtt{a}\mathtt{x}\mathtt{x}\mathtt{B}\mathtt{x}\quad;\quad\quad \mathtt{x}\mathtt{x}\mathtt{A}\mathtt{x}\mathtt{x}\mathtt{b}\mathtt{x}\quad;\quad\quad \mathtt{x}\mathtt{x}\mathtt{A}\mathtt{x}\mathtt{x}\mathtt{B}\mathtt{x}
\end{equation}

\[
\begin{array}{c}
\includegraphics[height=3.5cm]{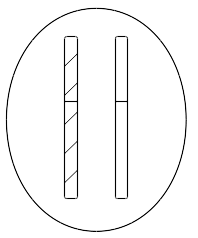} 
\end{array}
\longrightarrow
\begin{array}{c}
\includegraphics[width=5cm]{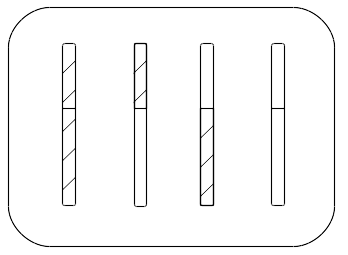} 
\end{array}
\]

Interestingly, the way the different recombined segments are written in (\ref{eq:result_of_recombination}) is reminiscent of the result of a bilinear operation between the sums $\mathtt{x}\mathtt{x}\mathtt{a}\mathtt{x}+\mathtt{x}\mathtt{x}\mathtt{A}\mathtt{x}$ and $\mathtt{x}\mathtt{b}\mathtt{x}+\mathtt{x}\mathtt{B}\mathtt{x}$.
\[
(\mathtt{x}\mathtt{x}\mathtt{a}\mathtt{x}+\mathtt{x}\mathtt{x}\mathtt{A}\mathtt{x})\otimes(\mathtt{x}\mathtt{b}\mathtt{x}+\mathtt{x}\mathtt{B}\mathtt{x}) = \mathtt{x}\mathtt{x}\mathtt{a}\mathtt{x}\otimes\mathtt{x}\mathtt{b}\mathtt{x}+ \mathtt{x}\mathtt{x}\mathtt{a}\mathtt{x}\otimes\mathtt{x}\mathtt{B}\mathtt{x}+ \mathtt{x}\mathtt{x}\mathtt{A}\mathtt{x}\otimes\mathtt{x}\mathtt{b}\mathtt{x}+ \mathtt{x}\mathtt{x}\mathtt{A}\mathtt{x}\otimes\mathtt{x}\mathtt{B}\mathtt{x}
\]
This suggests us that homologous recombination may belong to an algebraic setting in which the mating operation is given by an addition operation while the possible results of homologous recombination are given by a product operation. 
The way one passes from the segments $\mathtt{x}\mathtt{x}\mathtt{a}\mathtt{x}\mathtt{x}\mathtt{b}\mathtt{x}$ and $\mathtt{x}\mathtt{x}\mathtt{A}\mathtt{x}\mathtt{x}\mathtt{B}\mathtt{x}$, or perhaps we should say the sum $\mathtt{x}\mathtt{x}\mathtt{a}\mathtt{x}\mathtt{x}\mathtt{b}\mathtt{x}+\mathtt{x}\mathtt{x}\mathtt{A}\mathtt{x}\mathtt{x}\mathtt{B}\mathtt{x}$, to the sums $\mathtt{x}\mathtt{x}\mathtt{a}\mathtt{x}+\mathtt{x}\mathtt{x}\mathtt{A}\mathtt{x}$ and $\mathtt{x}\mathtt{b}\mathtt{x}+\mathtt{x}\mathtt{B}\mathtt{x}$ would be encoded as a functor from $\mathbf{Seg}(\Omega)$ to a category of semimodules.
\vspace{1pt}

2) Semimodules over semi-rings are very natural structures in which it is possible to talk about all sorts of interactions without contradicting the noticeable behavior of real processes. Specifically, the reason for which we will use semi-rings instead of rings is that quotients of rings are too destructive. While in semimodules over rings, an equation of the form $x+y = x$ implies the identity $y=0$, in semimodules over semi-rings, an equality $x+y = y$ will inform us that the information contained in $x$ is already contained in $y$, which is closer to the type of things one may want to say about biological systems.

3) Finally, the Boolean semi-ring with two elements, call it $B_2$, has the advantage to be suitable for statistical analysis (see section \ref{Genetic_linkage_and_mapping_functions}). For instance,  it is interesting to note that the free $B_2$-semimodule that is generated over a finite set $S$ is isomorphic to the power set of $S$ equipped with its union operation (see Example \ref{exa:power_set_of_finite_sets} and section \ref{ssec:Reminder_on_B_2_semimodules}). This power set, call it $\mathcal{P}(S)$, can then be seen as the set of probable events for a given universe $S$ of outcomes. As a result, the elements of a $B_2$-semimodules will sometimes be seen as statistical events in the sense of a probability space \cite{Loeve}.

\subsection{Semi-rings}
We will call a \emph{semi-ring} \cite[page 1]{Golan} any set $R$ equipped with a commutative monoid structure $(R,+,0)$ and monoid structure $(R,\cdot,1)$ such that $+$ is distributive over $\cdot$ and the operation $\cdot$ annihilates the neutral element 0, that is to say that, for every triple $ a,b,c \in R$, the following identities hold.
\[
(a+b)\cdot c = a \cdot c + b \cdot c\quad\quad\quad   a \cdot (b+c) = a \cdot b + a \cdot c\quad\quad\quad  a \cdot 0 = 0 = 0 \cdot a
\]
We shall denote by $B_2$ the semi-ring consisting of two elements $\{0,1\}$ whose monoid structures are defined according to the following addition and multiplication tables.
\[
\begin{array}{c|cc}
+&0&1\\
\hline
0 & 0 & 1\\
1 & 1 & 1
\end{array}
\quad\quad\quad\quad\quad\quad\quad\quad
\begin{array}{c|cc}
\cdot&0&1\\
\hline
0 & 0 & 0\\
1 & 0 & 1
\end{array}
\]

\subsection{Semimodules over semi-rings}
Let $(R,+,\cdot,0,1)$ be a semi-ring. A \emph{semimodule} over $R$ \cite[page 149]{Golan} is a commutative monoid $(M,\oplus,0_M)$ equipped with a function $\odot:R \times M \to M$  satisfying the following axioms:
\begin{itemize}
\item[1)] for every $r \in R$ and $x,y \in M$, the identity $r \odot (x\oplus y) = (r \odot x) \oplus (r \odot y)$ holds;
\item[2)] for every $r,s \in R$ and $x \in M$, the identity $(r+s)\odot x = (r \odot x) \oplus (s\odot x)$ holds;
\item[3)] for every $r,s \in R$ and $x \in M$, the identity $(r \cdot s) \odot x = r \odot (s \odot x)$ holds;
\item[4)] for every $r \in R$ and $x \in M$, the identities $1 \odot x = x$ and $0 \odot x = 0_M = r \odot 0_M$ hold;
\end{itemize}
In the sequel, the operations $\oplus$ and $\odot$ will be denoted by the same symbols as those used for the semi-ring structure, namely $(\_)+(\_)$ and $(\_)\cdot(\_)$.

\begin{example}[\texorpdfstring{$B_2$}{Lg} as a \texorpdfstring{$B_2$}{Lg}-semimodule]\label{exa:B_2_is_a_B_2_semimodule}
The set $B_2$ has an obvious semimodule structure over itself where its commutative monoid structure is given by the addition $+:B_2 \times B_2 \to B_2$ and the action is given by the multiplication $\cdot:B_2\times B_2 \to B_2$.
\end{example}

\begin{example}[Power sets]\label{exa:power_set_of_finite_sets}
Recall that the \emph{power set} of a set $S$, say $\mathcal{P}(S)$, is the set of all subsets of $S$. This set can also be viewed as the hom-set $\mathbf{Set}(S,\{0,1\})$, in which a function $S \to \{0,1\}$ can be seen as a subset of $S$ by specifying whether it contains an element or not by sending it to 1 or 0. We illustrate below the different ways that can be used to represent an element in the set $\mathcal{P}(S)$ when $S=\{\mathtt{A}\mathtt{G},\mathtt{C}\mathtt{G},\mathtt{C}\mathtt{A},\mathtt{T}\mathtt{A}\}$.
\[
\begin{array}{ccccc}
\cellcolor[gray]{0.8}\textrm{As a subset}&\cellcolor[gray]{0.8}& \cellcolor[gray]{0.8}\textrm{As a function}  &\cellcolor[gray]{0.8}& \cellcolor[gray]{0.8}\textrm{As a formal sum}\\
\hline
&&&&\vspace{-7pt}\\
\begin{array}{ccc}
f=\{\mathtt{A}\mathtt{G},\mathtt{C}\mathtt{G},\mathtt{T}\mathtt{A}\}
\end{array}
&\quad\quad
&
f:
\left[
\begin{array}{cccccc}
\mathtt{A}\mathtt{G}&\mapsto&1&\mathtt{C}\mathtt{A}&\mapsto&0\\
\mathtt{C}\mathtt{G}&\mapsto&1&\mathtt{T}\mathtt{A}&\mapsto&1\\
\end{array}
\right]
&
\quad\quad&
\begin{array}{ccc}
f=\mathtt{A}\mathtt{G}+\mathtt{C}\mathtt{G}+\mathtt{T}\mathtt{A}
\end{array}
\end{array}
\]
Let us now show that $\mathcal{P}(S)$ has a semimodule structure over $B_2$. As will be seen, each representation turn out to be useful depending on what one wants to express. 

First, the power set $\mathcal{P}(S)$ is a commutative monoid for the union of subsets of $S$. Note that this monoid operation can also be defined as the pointwise extension of the addition of $B_2$ on the images of the functions $S \to\{0,1\}$. However, in a calculation, it is better used in terms of formal sums, as shown below.
\[
(\mathtt{A}\mathtt{G}+\mathtt{C}\mathtt{G}) + (\mathtt{T}\mathtt{A}+\mathtt{C}\mathtt{G}) = \mathtt{A}\mathtt{G}+\underbrace{(1+1)}_{=1}\cdot \,\mathtt{C}\mathtt{G} + \mathtt{T}\mathtt{A} =  \mathtt{A}\mathtt{G}+\mathtt{C}\mathtt{G}+\mathtt{T}\mathtt{A}
\]
For its part, the action $B_2 \times \mathcal{P}(S) \to \mathcal{P}(S)$ is induced by the action of the multiplication of $B_2$ on the images of the functions $S \to \{0,1\}$. Interestingly, the resulting $B_2$-semimodule is equipped with an obvious function $S \to \mathcal{P}(S)$ (mapping $x \in S$ to the singleton $\{x\} \subseteq S$) that can be shown to possess a universal property when $S$ is a finite set (see Remark \ref{rem:description_free_b_2_semimodule}).
\end{example}

\subsection{Morphisms of semimodules}
Let $R$ be a semi-ring and $M$ and $N$ be two semimodules over $R$. A \emph{morphism of semimodules} from $M$ to $N$ is a function $f:M \to N$ such that for every pair $r,s \in R$ and pair $x,y \in M$, the following identity holds.
\begin{equation}\label{eq:morphisms_of_semimodules}
f(r \cdot x + s \cdot y ) = r \cdot f(x) + s \cdot f(y)
\end{equation}

\begin{remark}[Boolean coefficients]
In the case of a morphism of $B_2$-semimodules, it suffices to only verify equation (\ref{eq:morphisms_of_semimodules}) when $(r,s)=(1,1)$ and $(r,s)=(0,0)$, namely the equations $f(x+y) = f(x)+f(y)$ and $f(0_M) = 0_N$.
\end{remark}

\begin{example}[Generator]\label{exa:generator_B_2_morphism}
Any morphism $B_2 \to X$ of semimodules over $B_2$ is of the form $1 \mapsto x$ and $0 \mapsto 0$. This is equivalent to picking an element in the $B_2$-semimodule $X$.
\end{example}

\begin{example}\label{exa:power_set_morphism_of_modules_over_B_2}
In this example, we will use the notations of Example
\ref{exa:power_set_of_finite_sets}. Let $S$ and $T$ denote the sets $\{\mathtt{A}\mathtt{G},\mathtt{C}\mathtt{G},\mathtt{C}\mathtt{A},\mathtt{T}\mathtt{A}\}$ and $\{\mathtt{T}\mathtt{G},\mathtt{C}\mathtt{G},\mathtt{G}\mathtt{A},\mathtt{T}\mathtt{A}\}$, 
respectively. The function $f:S \to T$ given below, on the left, induces a morphism $f_*:\mathcal{P}(S) \to \mathcal{P}(T)$ of semimodules over $B_2$ that maps a subset of $S$ to its image via $f$ (see below, on the right).
\[
f:\left(
\begin{array}{ccc}
\mathtt{A}\mathtt{G}&\mapsto&\mathtt{T}\mathtt{G}\\
\mathtt{C}\mathtt{G}&\mapsto&\mathtt{C}\mathtt{G}\\
\mathtt{C}\mathtt{A}&\mapsto&\mathtt{G}\mathtt{A}\\
\mathtt{T}\mathtt{A}&\mapsto&\mathtt{T}\mathtt{A}
\end{array}
\right)
\quad\quad\leadsto\quad\quad
\begin{array}{lll}
f_*(\mathtt{A}\mathtt{G}) &=& \mathtt{T}\mathtt{G}\\
f_*(\mathtt{C}\mathtt{G}) &=& \mathtt{C}\mathtt{G}\\
f_*(\mathtt{A}\mathtt{G}+\mathtt{C}\mathtt{A}) &=& \mathtt{T}\mathtt{G}+\mathtt{G}\mathtt{A}\\
f_*(\mathtt{C}\mathtt{A}+\mathtt{C}\mathtt{G})& =& \mathtt{G}\mathtt{A}+\mathtt{C}\mathtt{G}\\
\end{array}
\]
\end{example}

\begin{convention}[Notation]
For every semi-ring $R$, the obvious category whose objects are semimodules over $R$ (also called \emph{$R$-semimodules}) and whose arrows are morphisms of $R$-semimodules will be denoted by $R\textrm{-}\mathbf{Mod}$.
\end{convention}

\begin{remark}[Power set as a functor]
One can check that the mapping rule $S \mapsto \mathcal{P}(S)$ extends to a functor $\mathcal{P}:\mathbf{Set} \to B_2\textrm{-}\mathbf{Mod}$ (see Examples \ref{exa:power_set_of_finite_sets} \& \ref{exa:power_set_morphism_of_modules_over_B_2}).
\end{remark}

\subsection{Reminder on \texorpdfstring{$B_2$}{Lg}-semimodules}\label{ssec:Reminder_on_B_2_semimodules}
A good reference on semirings and semimodules over semirings is \cite{Golan}. In this section, we recall some useful facts about $B_2$-semimodules. First of all, recall that $B_2\textrm{-}\mathbf{Mod}$ is a locally presentable category
and therefore admits all limits and colimits. It also follows from this fact that we have an adjunction of the form (\ref{adjunction_set_b_2_semimodule}), where the left adjoint $F$ maps a set $S$ to the free $B_2$-semimodules generated over $S$ (see Remark \ref{rem:description_free_b_2_semimodule}) while the right adjoint $U$ is the obvious forgetful functor.
\begin{equation}\label{adjunction_set_b_2_semimodule}
\xymatrix{
\mathbf{Set}\ar@<+1.2ex>[r]^-F\ar@{<-}@<-1.2ex>[r]_-U\ar@{}[r]|-{\bot}&B_2\textrm{-}\mathbf{Mod}
}
\end{equation}

\begin{remark}[Free semimodules]\label{rem:description_free_b_2_semimodule}
For every set $S$, the $B_2$-semimodule $F(S)$ is equivalently:
\begin{itemize}
\item[1)] the set of finite subsets of $S$;
\item[2)] the set of functions $f:S \to \{0,1\}$ for which the fiber $f^{-1}(1)$ is finite;
\item[3)] the set of formal sums of finite collections of distinct elements in $S$.
\end{itemize}
The unit of adjunction (\ref{adjunction_set_b_2_semimodule}) is given, in the first case, by the function $x \mapsto \{x\}$ and, in the second case, by the function $x \mapsto \delta_x$ where $\delta_x$ maps $x$ to $1$ and any other element to 0.
\end{remark}

\begin{convention}[Support]\label{conv:support}
For every set $S$ and element $x \in F(S)$, we will speak of the \emph{support} of $x$ to refer to the set representation of $x$ (see item 1 of Remark \ref{rem:description_free_b_2_semimodule}). This set will be denoted as $\mathsf{Supp}(x)$.
\end{convention}

\begin{example}[Support]
With respect to the notation of Example \ref{exa:power_set_of_finite_sets}, the support of the formal sum $f=\mathtt{A}\mathtt{G}+\mathtt{C}\mathtt{G}+\mathtt{T}\mathtt{A}$ is the set $\{\mathtt{A}\mathtt{G},\mathtt{C}\mathtt{G},\mathtt{T}\mathtt{A}\}$.
\end{example}

\begin{remark}[Support]
For every set $S$, if we see an element $x \in F(S)$ as a function $S \to \{0,1\}$, then the set $\mathsf{Supp}(x)$ is equal to the fiber $x^{-1}(1)$. 
\end{remark}

\begin{definition}[Sub-element]\label{def:sub-element_leq}
Let $S$ be a set. An element $y \in F(S)$ will be said to be a \emph{sub-element} of another element $x \in F(S)$ if the support of $y$ is included in the support of $x$. We will then write $y \leq x$.
\end{definition}

\begin{remark}[Reformulation]\label{rem:reformulation_leq_formal_sum_function_subsets}
From the point of view of the functions $S \to \{0,1\}$, the relation $\leq$ given in Definition \ref{def:sub-element_leq} can be seen as the pointwise pre-order induced by the Boolean pre-ordered set $\{0\leq 1\}$ on the following product set (as in Example \ref{exa:product_pre-order_set_0_1}).
\[
\mathbf{Set}(S,\{0,1\}) \cong \prod_{s \in S}\{0,1\}
\]
\end{remark}

\begin{example}
Let $S$ denote the set $\{\mathtt{A}\mathtt{G},\mathtt{C}\mathtt{G},\mathtt{C}\mathtt{A},\mathtt{T}\mathtt{A}\}$. If one wants to think of the relation $\leq$ on $F(S)$ in terms of formal sums, then one deals with inequalities as follows.
\[
\mathtt{A}\mathtt{G}+\mathtt{C}\mathtt{A} \leq \mathtt{A}\mathtt{G}+\mathtt{C}\mathtt{G}+\mathtt{C}\mathtt{A}
\]
\end{example}

\begin{proposition}\label{prop:sub-element_B_2_semimodules_partial_order}
The binary relation $\leq$ is a partial order.
\end{proposition}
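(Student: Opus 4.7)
The plan is to reduce the statement to a well-known fact, namely that set-theoretic inclusion $\subseteq$ is a partial order on the power set of any set. By Convention \ref{conv:support} together with Definition \ref{def:sub-element_leq}, the relation $y \leq x$ on $F(S)$ is defined as $\mathsf{Supp}(y) \subseteq \mathsf{Supp}(x)$. Since Remark \ref{rem:description_free_b_2_semimodule} (item 1) tells us that the assignment $x \mapsto \mathsf{Supp}(x)$ identifies $F(S)$ with the set of finite subsets of $S$ (in particular, this assignment is injective), antisymmetry, reflexivity and transitivity for $\leq$ will follow directly from the corresponding properties of $\subseteq$.

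In detail, I would proceed as follows. First, for reflexivity, note that $\mathsf{Supp}(x) \subseteq \mathsf{Supp}(x)$ trivially holds for every $x \in F(S)$, so $x \leq x$. Second, for transitivity, suppose $x \leq y$ and $y \leq z$; then $\mathsf{Supp}(x) \subseteq \mathsf{Supp}(y)$ and $\mathsf{Supp}(y) \subseteq \mathsf{Supp}(z)$, and transitivity of $\subseteq$ gives $\mathsf{Supp}(x) \subseteq \mathsf{Supp}(z)$, i.e. $x \leq z$. Third, for antisymmetry, suppose $x \leq y$ and $y \leq x$; then $\mathsf{Supp}(x) = \mathsf{Supp}(y)$, and since the support map is a bijection onto the set of finite subsets of $S$ (Remark \ref{rem:description_free_b_2_semimodule}), we conclude $x = y$.

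Alternatively, one can invoke Remark \ref{rem:reformulation_leq_formal_sum_function_subsets}, which identifies $\leq$ with the pointwise order on $\mathbf{Set}(S,\{0,1\}) \cong \prod_{s \in S}\{0,1\}$ induced by the Boolean order $\{0 \leq 1\}$. Since $\{0 \leq 1\}$ is a (total) partial order on $\{0,1\}$, its pointwise extension to an arbitrary product is again a partial order, which yields the claim immediately.

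There is essentially no obstacle here: the only thing to be careful about is antisymmetry, which really does require the injectivity of the support map (or equivalently the fact that the representation of an element of $F(S)$ as a formal sum of distinct elements is unique), and this is exactly what Remark \ref{rem:description_free_b_2_semimodule} provides. I would probably favour the first route, since it keeps the argument self-contained and uses only the definitions without requiring the reader to unfold the product order identification.
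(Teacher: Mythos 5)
Your proof is correct and follows essentially the same route as the paper, which simply observes that the relation reduces to inclusion of sets, itself a partial order. Your additional remark that antisymmetry requires the injectivity of the support map is a worthwhile precision that the paper's one-line proof leaves implicit.
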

\begin{proof}
Because the inclusion of sets is a partial order.
\end{proof}

\subsection{Reminder on monomorphisms and epimorphisms}
In this section, we recall a few facts on how to detect epimorphisms and monomorphisms. The content of this section is therefore about certain concepts of category theory (see \cite[def. 7.2.1.4]{SpivakBook} and/or below) and not biology. 

First, recall that a monomorphism in a category $\mathcal{C}$ is an arrow $m:A \to B$ such that for every pair of parallel arrows $f,g:X \rightrightarrows A$ for which the equation $m \circ f = m \circ g$ holds, the two arrows $f$ and $g$ must be equal.

\begin{proposition}\label{prop:characterization_mono}
If $\mathcal{C}$ has pullbacks, then an arrow $m:A \to B$ is a monomorphism in $\mathcal{C}$ if the pullback $p_1,p_2:P \rightrightarrows  A$ of two copies of $m$ is such that $p_1$ equals $p_2$.
\end{proposition}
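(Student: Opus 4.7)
The plan is a short, purely formal category-theoretic argument using the universal property of the pullback. I would not need any of the structure specific to $\mathbf{Seg}(\Omega)$ or pedigrads here; this is a general statement about categories with pullbacks.

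First I would set up the notation. Assume that the pullback of $m:A \to B$ along itself exists and has the form
\[
\xymatrix{
P \ar[r]^{p_2}\ar[d]_{p_1} & A \ar[d]^{m}\\
A \ar[r]_{m} & B
}
\]
with $p_1 = p_2$. To verify that $m$ is a monomorphism in the sense recalled just before the statement, I would take an arbitrary parallel pair $f,g : X \rightrightarrows A$ satisfying $m \circ f = m \circ g$, and show that $f = g$.

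The key step is the universal property of the pullback. Since $m \circ f = m \circ g$, the pair $(f,g)$ forms a cone over the cospan $A \xrightarrow{m} B \xleftarrow{m} A$, so there exists a unique arrow $h : X \to P$ satisfying $p_1 \circ h = f$ and $p_2 \circ h = g$. Using the assumption $p_1 = p_2$, I then get
\[
f = p_1 \circ h = p_2 \circ h = g,
\]
which establishes that $m$ is a monomorphism.

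There is essentially no obstacle: the whole argument is a one-line chase through the universal property, and the hypothesis $p_1 = p_2$ is used in exactly one place. The only thing worth being careful about is noting that the pullback exists by assumption on $\mathcal{C}$, so the arrow $h$ in the argument is guaranteed to be defined.
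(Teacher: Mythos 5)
Your argument is correct and is essentially identical to the paper's own proof: both invoke the universal property of the pullback to produce the mediating arrow $h$ with $p_1 \circ h = f$ and $p_2 \circ h = g$, and then conclude $f = g$ from $p_1 = p_2$. No issues.
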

\begin{proof}
For every pair of parallel arrows $f,g:X \rightrightarrows A$ for which the equation $m \circ f = m \circ g$ holds, the universality of the pullback $P$ gives an arrow $h:X \to P$ for which the identities $f = p_1 \circ h$ and $g = p_2 \circ h$ hold. Because $p_1 = p_2$, we have $f= g$ and the statement follows.
\end{proof}

Now, recall that an epimorphism in a category $\mathcal{C}$ is an arrow $e:A \to B$ such that for every pair of parallel arrows $f,g:B \rightrightarrows X$ for which the equation $f \circ e = g \circ e$ holds, the two arrows $f$ and $g$ must be equal.

\begin{definition}[Orthogonality]
A morphism $f:X \to Y$ will be said to be \emph{orthogonal} to an object $I$ if for every arrow $i:I \to Y$, there exists a dashed arrow (called the \emph{lift}) making the diagram given below commute.
\[
\xymatrix@R-5pt{
&X\ar[d]^{e}\\
I\ar[r]_{i}\ar@{-->}[ru]&Y
}
\]
\end{definition}

\begin{proposition}\label{prop:characterization_epi}
Every morphism in $B_2\textrm{-}\mathbf{Mod}$ that is orthogonal with respect to the $B_2$-semimodule $B_2$ (Example \ref{exa:B_2_is_a_B_2_semimodule}) is an epimorphism. 
\end{proposition}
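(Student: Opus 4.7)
The plan is to unwind the orthogonality hypothesis using Example \ref{exa:generator_B_2_morphism} in order to reduce the statement to the elementary fact that surjective maps of underlying sets are epimorphisms in $B_2\textrm{-}\mathbf{Mod}$.

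First, I would recall that, by Example \ref{exa:generator_B_2_morphism}, morphisms $B_2 \to Y$ of $B_2$-semimodules are in bijection with elements of $Y$: a morphism $i:B_2 \to Y$ corresponds to $i(1) \in Y$, and conversely every $y \in Y$ determines the morphism sending $1 \mapsto y$ and $0 \mapsto 0$. In particular, the orthogonality hypothesis on $e:X \to Y$ with respect to $B_2$ translates, after evaluating at $1 \in B_2$, to the following concrete statement: for every $y \in Y$ there exists $x \in X$ with $e(x) = y$. In other words, the underlying function of $e$ is surjective.

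Next, I would deduce that $e$ is an epimorphism directly from this set-theoretic surjectivity. Given any pair of parallel morphisms $f,g:Y \rightrightarrows Z$ in $B_2\textrm{-}\mathbf{Mod}$ satisfying $f \circ e = g \circ e$, I would pick an arbitrary $y \in Y$, use surjectivity to choose $x \in X$ with $e(x)=y$, and then compute
\[
f(y) = f(e(x)) = g(e(x)) = g(y).
\]
Since $y$ was arbitrary, $f = g$ as underlying functions and hence as morphisms of $B_2$-semimodules, which is precisely the epimorphism property.

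There is no real obstacle here; the whole content of the proposition is the translation of the categorical lifting condition into set-theoretic surjectivity via Example \ref{exa:generator_B_2_morphism}. The one point to keep in mind is that the lift supplied by orthogonality is a morphism $B_2 \to X$ and not literally an element of $X$, so I would be explicit in saying ``evaluating at $1 \in B_2$'' when extracting the preimage $x \in X$ of a given $y \in Y$.
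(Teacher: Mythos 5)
Your argument is correct and follows the paper's proof exactly: both use Example \ref{exa:generator_B_2_morphism} to translate orthogonality with respect to $B_2$ into surjectivity of the underlying function, and then conclude that surjective morphisms are epimorphisms. You merely spell out the final step (the computation $f(y)=f(e(x))=g(e(x))=g(y)$) that the paper leaves as a direct check.
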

\begin{proof}
We can use Example \ref{exa:generator_B_2_morphism} to see that if an arrow $e:X \to Y$ is orthogonal with respect to the $B_2$-semimodule $B_2$, then it is surjective: for every $y \in Y$, there exists $x \in X$ for which the identity $e(x) = y$ holds. We can directly check that surjective morphisms in $B_2\textrm{-}\mathbf{Mod}$ are epimorphisms in $B_2\textrm{-}\mathbf{Mod}$.
\end{proof}

\begin{example}[Coequalizers]\label{exa:characterization_epi_coequalizer_maps}
Let us show that coequalizer maps \cite[def. 3.3.3.1]{SpivakBook} in $B_2\textrm{-}\mathbf{Mod}$ are orthogonal to the $B_2$-semimodule $B_2$. Let $f,g:X \rightrightarrows Y$ be a pair of morphisms in $B_2\textrm{-}\mathbf{Mod}$. Since $B_2\textrm{-}\mathbf{Mod}$ is cocomplete, we can form its coequalizer $e:Y \to Q$. We can then show, from \cite[Example 15.1]{Golan}, that this map is a quotient map. To show this, define the equivalence relation $R$ containing the pairs of elements $m$ and $m'$ in $Y$ such that for every object $Z$ and morphism $h:Y \to Z$ in $B_2\textrm{-}\mathbf{Mod}$ for which the equation $h \circ f = h \circ g$ holds, the relation $h(m) = h(m')$ holds too. We can verify that $R$ defines a congruence \cite{Golan} in $B_2\textrm{-}\mathbf{Mod}$ (i.e. an equivalence relation living in $B_2\textrm{-}\mathbf{Mod}$). The form of the definition of $R$ is such that all the assumptions of \cite[Example 15.1]{Golan} can be easily verified. It then follows that the map $e:Y \to Q$ can be identified as a quotient map $e_R:M \to M/R$ (see \cite[Page 163]{Golan}) so that there is a bijection making the following diagram commute.
\[
\xymatrix@C-55pt@R-10pt{
&B_2\textrm{-}\mathbf{Mod}(B_2,Y)\ar[rd]^{e_R}\ar[ld]_e&\\
B_2\textrm{-}\mathbf{Mod}(B_2,Q) &\cong& B_2\textrm{-}\mathbf{Mod}(B_2,Y)/R
}
\] 
This bijection implies that for every arrow $i:B_2 \to Q$, there exists an arrow $h:B_2 \to Y$ for which the identity $e \circ h = i$ holds. In other words, the coequalizer map $e$ is orthogonal to the $B_2$-semimodule $B_2$.
\end{example}

\subsection{Wide spans}
Let $\mathcal{C}$ be a category. We will speak of a \emph{wide span} in $\mathcal{C}$ to refer to a pair $(k,\mathbf{S})$ where $k$ is a positive integer and $\mathbf{S}$ is a $[k]$-indexed collection of arrows in $\mathcal{C}$ whose domains are all equal (see below).
\[
\begin{array}{cccc}
\xymatrix@R-18pt@C-20pt{
&&S\ar[lld]\ar[ld]\ar[rd]\ar[rrd]&&\\
S_1&S_2&\dots&S_{k-1}&S_k
}
\end{array}
\]
Later on, a wide span $(k,\mathbf{S})$ will often be denoted as $\mathbf{S}$ only.

\begin{example}
See Example \ref{exa:Relative_definition_families} for an example of wide span related to pedigrads.
\end{example}

\begin{remark}[Wide spans are cones]
A wide span is a cone defined over a finite discrete small category whose objects are equipped with a total order.
\end{remark}

\begin{definition}[Cardinality]
For every wide span $\mathbf{S}=\{S \to S_i\}_{i \in [k]}$ in $\mathcal{C}$, the integer $k$ and ordered set $[k]$ will be denoted as $|\mathbf{S}|$ and $[\mathbf{S}]$, respectively. Both data will be referred to as the \emph{cardinality} of $\mathbf{S}$ (one being an integer, the other being a set).
\end{definition}

\begin{convention}[Product of a wide span]
Suppose $\mathcal{C}$ has products. For every wide span $\mathbf{S}=\{S \to S_i\}_{i \in [k]}$ in $\mathcal{C}$, we will denote by $\mathbf{S}^{\times}$ the product object $S_1 \times \dots\times S_n$.
\end{convention}

\begin{remark}[Canonical arrows]
Every wide span $\mathbf{S}=\{S \to S_i\}_{i \in [k]}$ in a category $\mathcal{C}$ that has products is equipped with an obvious arrow $S \to \mathbf{S}^{\times}$ (the limit adjoint of the underlying cone) and an obvious projection map $\mathbf{S}^{\times} \to S_i$ whose composition is equal to the arrow $S \to S_i$ for every $i \in [k]$.
\end{remark}

\begin{definition}[Cartesian]
A wide span $\mathbf{S}=\{S \to S_i\}_{i \in [k]}$ in a category $\mathcal{C}$ that has products $\mathcal{C}$ will be said to be \emph{Cartesian} if the canonical arrow $S \to \mathbf{S}^{\times}$ is an isomorphism.
\end{definition}

\subsection{Finite chromologies}\label{ssec:finite_chromologies}
We shall speak of a \emph{finite chromology} to refer to a chromology $(\Omega,D)$ whose sets $D[n]$ are finite sets of wide spans, for every non-negative integer $n$. 

\begin{example}
The cones given in Examples \ref{exa:Homologous_cones} \& \ref{exa:Quasi-homologous_cones} are suitable for defining a finite chromology. These cones are defined over a finite discrete small category consisting of three objects. An example of non-suitable cone is given below, where the small category on which it is defined is a cospan $A=\{\cdot \rightarrow \cdot \leftarrow \cdot\}$ and is therefore not discrete.
\[
\begin{array}{l}
\xymatrix@C-30pt@R-25pt{
&&&&&&&&(\bullet&\bullet&\bullet&\bullet)&(\circ&\circ&\circ)\ar[rrd]&&&&&&&&\\
(\bullet&\bullet&\bullet&\bullet)&(\bullet&\bullet&\bullet)\ar[rru]\ar[rrd]&\quad\quad\quad\quad\quad\quad&&&&&&&&\quad\quad&(\circ&\circ&\circ&\circ)&(\circ&\circ&\circ)\\
&&&&&&&&(\circ&\circ&\circ&\circ)&(\bullet&\bullet&\bullet)\ar[rru]&&&&&&&&
}
\end{array}
\quad\quad(\textrm{non-suitable})
\]
\end{example}

\subsection{Recombination cones}\label{ssec:Relative_topology_families}
In this section, we shall let $(\Omega,D)$ denote a finite chromology. The goal of this section is to show how the cones of $D$ can be used to associate all segments over $\Omega$ with a span of sets that describes the range of recombination operations related to this object from the point of view of a given pedigrad in $\mathbf{Set}$ (see section \ref{ssec:Recomb_congruences}).

\begin{definition}[Recombination cones]\label{def:Relative_definition_families}
Let $X$ be a functor $\mathbf{Seg}(\Omega) \to \mathbf{Set}$. For every wide span $\rho:\Delta_A(\tau) \Rightarrow \theta$ in $D$, the wide span of sets that is the image of $\rho$ via $X$
will be called the \emph{$\rho$-recombination cone} of $X$ and denoted by $X(\rho)$.
\end{definition}

\begin{example}[Recombination cones]\label{exa:Relative_definition_families}
Let $\Omega$ be the Boolean pre-ordered set $\{0 \leq 1\}$ and let $(E,\varepsilon)$ be our usual pointed set $\{\mathtt{A},\mathtt{C},\mathtt{G},\mathtt{T},\varepsilon\}$. We will take $b$ to be equal to 1 and $\rho$ to be the exactly 1-distributive cone given in Example \ref{exa:Homologous_cones} (see below); the small discrete category $A$ on which $\rho$ is defined will be taken to be equal to the finite set $\{a_1,a_2,a_3\}$.
\[
\rho:
\begin{array}{l}
\xymatrix@C-30pt@R-20pt{
&&&&&&&&&&&&&&&&&
&&
(\circ&\circ&\circ)&(\circ&\circ)&(\bullet&\bullet&\bullet&\bullet)&(\circ&\circ&\circ&\circ&\circ)&(\circ&\circ&\circ)&(\circ)&\quad&\theta(a_1)
\\
(\bullet&\bullet&\bullet)&(\circ&\circ)&(\bullet\ar@<-3ex>@{}[rrrrrrr]|{\underbrace{\quad\quad\quad\quad\quad\quad\quad\quad\quad\quad\quad\quad\quad}_{}}_{\tau}&\bullet&\bullet&\bullet)&(\bullet&\bullet&\bullet&\bullet&\bullet)&(\circ&\circ&\circ)&(\circ)\ar[rru]\ar[rr]\ar[rrd]
&\quad\quad\quad&
(\circ&\circ&\circ)&(\circ&\circ)&(\circ&\circ&\circ&\circ)&(\bullet&\bullet&\bullet&\bullet&\bullet)&(\circ&\circ&\circ)&(\circ)&\quad&\theta(a_2)
\\
&&&&&&&&&&&&&&&&&
&&
(\bullet&\bullet&\bullet)&(\circ&\circ)&(\circ&\circ&\circ&\circ)&(\circ&\circ&\circ&\circ&\circ)&(\circ&\circ&\circ)&(\circ)&\quad&\theta(a_3)
}
\end{array}
\] 
The $\rho$-recombination cone of $E_b^{\varepsilon}$ is a wide span consisting of three arrows
\[
E_b^{\varepsilon}(\tau) \to E_b^{\varepsilon}(\theta(a_1)),\quad E_b^{\varepsilon}(\tau) \to E_b^{\varepsilon}(\theta(a_2)),\quad E_b^{\varepsilon}(\tau) \to E_b^{\varepsilon}(\theta(a_3))
\]
whose respective codomains are, according to Remark \ref{rem:E_b_varepsilon_as words_functions}, of the form given below, in the rightmost column of the displayed table (the left columns are only given for the sake of exposition) and whose domains $E_b^{\varepsilon}(\tau)$ are isomorphic to $E^{\times 12}$.
\[
\begin{array}{l|l|l|ll}
\cellcolor[gray]{0.8}i & \multicolumn{1}{c|}{\cellcolor[gray]{0.8}\mathsf{Tr}_1(\theta(a_i))} &\cellcolor[gray]{0.8} \leadsto & \multicolumn{2}{c}{\cellcolor[gray]{0.8}E_b^{\varepsilon}(\theta(a_i))}\\
\hline
1 & \{6,7,8,9\} &\leadsto & \mathbf{Set}(\{6,7,8,9\},E)&\cong E^{\times 4}\\
2 & \{10,11,12,13,14\}&\leadsto & \mathbf{Set}(\{10,11,12,13,14\},E) &\cong E^{\times 5}\\
3 & \{1,2,3\}&\leadsto &\mathbf{Set}(\{1,2,3\},E) &\cong E^{\times 3}\\
\end{array}
\]
By Proposition \ref{prop:E_b_varepsilon_W_iso_pedigrad_exactly_distributive}, 
the canonical arrow $E_b^{\varepsilon}(\tau) \to E_b^{\varepsilon}(\rho)^{\times}$ must be a bijection, which implies that the wide span $E_b^{\varepsilon}(\rho)$ is Cartesian. From the point of view of the isomorphisms given in the previous table, the bijection $E_b^{\varepsilon}(\tau) \to E_b^{\varepsilon}(\rho)^{\times}$ is of the form 
\[
E^{\times 12} \to E^{\times 4} \times E^{\times 5} \times E^{\times 3}
\]
and sends the words given below, on the left, (to which parenthesis have been added to facilitate the recognition of the construction) to the corresponding tuples given on the right.
\[
\begin{array}{lll}
(\mathtt{T}\mathtt{A}\mathtt{G})(\mathtt{A}\mathtt{C}\mathtt{G}\mathtt{A})(\mathtt{C}\mathtt{G}\varepsilon\mathtt{T}\mathtt{T})& \mapsto & (\mathtt{A}\mathtt{C}\mathtt{G}\mathtt{A},\mathtt{C}\mathtt{G}\varepsilon\mathtt{T}\mathtt{T},\mathtt{T}\mathtt{A}\mathtt{G})\\
(\varepsilon\mathtt{C}\mathtt{A})(\mathtt{G}\mathtt{G}\mathtt{T}\mathtt{A})(\mathtt{C}\mathtt{C}\mathtt{T}\mathtt{A}\mathtt{T}) & \mapsto & (\mathtt{G}\mathtt{G}\mathtt{T}\mathtt{A},\mathtt{C}\mathtt{C}\mathtt{T}\mathtt{A}\mathtt{T},\varepsilon\mathtt{C}\mathtt{A})\\
(\mathtt{C}\varepsilon\varepsilon)(\mathtt{G}\mathtt{G}\mathtt{C}\mathtt{C})(\mathtt{T}\mathtt{A}\mathtt{G}\mathtt{T}\mathtt{T}) & \mapsto & (\mathtt{G}\mathtt{G}\mathtt{C}\mathtt{C},\mathtt{T}\mathtt{A}\mathtt{G}\mathtt{T}\mathtt{T},\mathtt{C}\varepsilon\varepsilon)\\
&\textrm{etc.}&
\end{array}
\]
\end{example}

\subsection{Recombination congruences}\label{ssec:Recomb_congruences}
The goal of this section is to define congruences that will allow us to identify two populations sharing the same haplotype. These congruences will be defined with respect to wide spans as given in section \ref{ssec:Relative_topology_families}. In this section, the functor $F:\mathbf{Set} \to B_2\textrm{-}\mathbf{Mod}$ refers the left adjoint of (\ref{adjunction_set_b_2_semimodule}).

\begin{convention}[Wide spans of semimodules]
For every wide span $\mathbf{S}=\{S \to S_k\}_{k \in [\mathbf{S}]}$ in $\mathbf{Set}$, we will denote by $F(\mathbf{S})$ the corresponding wide span $\{F(S) \to F(S_k)\}_{k \in [\mathbf{S}]}$ in $B_2\textrm{-}\mathbf{Mod}$.
\end{convention}

\begin{convention}\label{conv:definition_pi_S}
For every wide span $\mathbf{S}=\{S \to S_k\}_{k \in [\mathbf{S}]}$ in $\mathbf{Set}$, we will denote by $\pi_{\mathbf{S}}$ the canonical morphism $F(S) \to F(\mathbf{S})^{\times}$ in $B_2\textrm{-}\mathbf{Mod}$ associated with the wide span $F(\mathbf{S})$ in $B_2\textrm{-}\mathbf{Mod}$.
\end{convention}

\begin{definition}[Recombination congruences]\label{def:Recomb_congruences}
For every wide span $\mathbf{S}=\{S \to S_k\}_{k \in [\mathbf{S}]}$ in $\mathbf{Set}$, the pullback $G(\mathbf{S})\rightrightarrows F(S)$ defined below will be called the \emph{recombination congruence} of $\mathbf{S}$.
\[
\xymatrix{
G(\mathbf{S})\ar@{}[rd]|<<<{\rotatebox[origin=c]{90}{\huge{\textrm{$\llcorner$}}}}\ar[r]^{\mathsf{prj}_1}\ar[d]_{\mathsf{prj}_2}&F(S)\ar[d]^{\pi_{\mathbf{S}}}\\
F(S)\ar[r]_{\pi_{\mathbf{S}}}&F(\mathbf{S})^{\times}
}
\]
\end{definition}

\begin{remark}\label{rem:Recombination_congruence_cartesian_wide_span}
If we let $\mathbf{S}$ be a Cartesian wide span in $\mathbf{Set}$, then the pullback $G(\mathbf{S})\rightrightarrows F(S)$ can also be seen as a pullback $G(\mathbf{S})\rightrightarrows F(\mathbf{S}^{\times})$ by post-composition with the isomorphism $F(S) \to F(\mathbf{S}^{\times})$ since the following diagram commutes by universality of the product $F(\mathbf{S})^{\times}$.
\[
\xymatrix{
F(S)\ar@/^20pt/[rr]^{\pi_{\mathbf{S}}}\ar[r]^{\cong}&F(\mathbf{S}^{\times})\ar[r]&F(\mathbf{S})^{\times}
}
\]
\end{remark}

\begin{example}[Homologous recombination]\label{exa:Recombination_congruences_pi_family}
For convenience, we shall let $\mathbf{S}$ denote the Cartesian wide span $E_b^{\varepsilon}(\rho)$ given in Example \ref{exa:Relative_definition_families}. The associated recombination congruence is of the form $G(\mathbf{S})\rightrightarrows FE_b^{\varepsilon}(\tau)$. As explained in Remark \ref{rem:Recombination_congruence_cartesian_wide_span}, this recombination congruence can also be viewed as a congruence of the form $G(\mathbf{S})\rightrightarrows F(\mathbf{S}^{\times})$, which will be much easier for the eye (see below). To see what the elements of this congruence look like, take $x$ and $y$ to be the sums given below in the top and bottom rows, respectively.
\[
\begin{array}{c|c|c|c}
\cellcolor[gray]{0.8}&\cellcolor[gray]{0.8}\textrm{seen in }F(\mathbf{S}^{\times})&\cellcolor[gray]{0.8}\cong&\cellcolor[gray]{0.8}\textrm{seen in }FE_b^{\varepsilon}(\tau)\\
\hline
&&&\vspace{-5pt}\\
x&(\mathtt{A}\mathtt{C}\mathtt{G}\mathtt{A},\mathtt{C}\mathtt{G}\varepsilon\mathtt{T}\mathtt{T},\mathtt{T}\mathtt{A}\mathtt{G}) + (\mathtt{G}\mathtt{G}\mathtt{T}\mathtt{A},\mathtt{C}\mathtt{C}\mathtt{T}\mathtt{A}\mathtt{T},\varepsilon\mathtt{C}\mathtt{A})
&&
\mathtt{T}\mathtt{A}\mathtt{G}\mathtt{A}\mathtt{C}\mathtt{G}\mathtt{A}\mathtt{C}\mathtt{G}\varepsilon\mathtt{T}\mathtt{T} + \varepsilon\mathtt{C}\mathtt{A}\mathtt{G}\mathtt{G}\mathtt{T}\mathtt{A}\mathtt{C}\mathtt{C}\mathtt{T}\mathtt{A}\mathtt{T}\\
y&(\mathtt{G}\mathtt{G}\mathtt{T}\mathtt{A},\mathtt{C}\mathtt{G}\varepsilon\mathtt{T}\mathtt{T},\varepsilon\mathtt{C}\mathtt{A}) + (\mathtt{A}\mathtt{C}\mathtt{G}\mathtt{A},\mathtt{C}\mathtt{C}\mathtt{T}\mathtt{A}\mathtt{T},\mathtt{T}\mathtt{A}\mathtt{G})
&&
\varepsilon\mathtt{C}\mathtt{A}\mathtt{A}\mathtt{C}\mathtt{G}\mathtt{A}\mathtt{C}\mathtt{C}\mathtt{T}\mathtt{A}\mathtt{T}+\mathtt{T}\mathtt{A}\mathtt{G}\mathtt{G}\mathtt{G}\mathtt{T}\mathtt{A}\mathtt{C}\mathtt{G}\varepsilon\mathtt{T}\mathtt{T}\vspace{5pt}
\end{array}
\]
We can check that the images $\pi_{\mathbf{S}}(x)$ and $\pi_{\mathbf{S}}(y)$ are equal to the following tuple in $F(\mathbf{S})^{\times}$.
\[
\Big(\,\mathtt{A}\mathtt{C}\mathtt{G}\mathtt{A}+\mathtt{G}\mathtt{G}\mathtt{T}\mathtt{A}\,,\,\mathtt{C}\mathtt{G}\varepsilon\mathtt{T}\mathtt{T}+\mathtt{C}\mathtt{C}\mathtt{T}\mathtt{A}\mathtt{T}\,,\,\mathtt{T}\mathtt{A}\mathtt{G}+\varepsilon\mathtt{C}\mathtt{A}\,\Big)
\]
In other words, the pair $(x,y)$ belongs to the set $G(\mathbf{S})$. Here, the intuition is that the set $G(\mathbf{S})$ contains pairs of elements that are the same up to homologous recombination with respect to the topology specified by the cone $\rho$.
\end{example}

\begin{example}[Haplogroups and haplotypes]\label{exa:Haplogroups_biology}
In genetics, a \emph{haplotype} is a given set of genes or DNA strands, say $\mathtt{ACGA}$ and $\mathtt{TAG}$, for particular loci on a chromosome while a \emph{haplogroup} for this haplotype can be viewed as a group of DNA segments sharing these strands at the specified locations whereas the other locations may contain single-nucleotide  polymorphisms (SNP), which are one-nucleobase long mutations that are noticeable within a non-negligible percentage of the population (see \cite{Kivisild,Chen}).
\[
\xymatrix@R-10pt@C-45pt{
\fbox{$\begin{array}{c}\textrm{haplotype }\\\mathtt{ACGA}:\mathtt{TAG}\end{array}$}\,~\,~\,~\,~\,~\,~\,~\,~&\fbox{$\begin{array}{c}\textrm{haplogroup X}\\\dots\mathtt{ACGA}(\mathtt{A}\,\textrm{or}\,\mathtt{C})\mathtt{TAG}\dots\end{array}$}\ar[ld]\ar[rd]&\\
\fbox{$\begin{array}{c}\textrm{haplogroup  XA}\\\dots\mathtt{ACGA}\underline{\mathtt{A}}\mathtt{TAG}\dots\end{array}$}&&\fbox{$\begin{array}{c}\textrm{haplogroup XC}\\\dots\mathtt{ACGA}\underline{\mathtt{C}}\mathtt{TAG}\dots\end{array}$}
}
\]
From the point of view of Example \ref{exa:Recombination_congruences_pi_family}, the haplogroup X that is given in the previous picture can be viewed as an element in the fiber of the map $\pi_{\mathbf{S}}:F(S) \to F(\mathbf{S})^{\times}$ above the following tuple in $F(\mathbf{S})^{\times}$.
\[
(\mathtt{ACGA},\mathtt{A}+\mathtt{C},\mathtt{TAG})
\]
In this paper, we shall therefore speak of a `haplotype' to refer to an element in the $B_2$-module $F(\mathbf{S})^{\times}$ while we shall speak of a `haplogroup' for this haplotype to refer to an element in the fiber of the map $\pi_{\mathbf{S}}:F(S) \to F(\mathbf{S})^{\times}$ above this element. In Example \ref{exa:Recombination_congruences_pi_family}, the two elements $x$ and $y$ could be viewed as two haplogroups of the same haplotype $(\mathtt{A}\mathtt{C}\mathtt{G}\mathtt{A}+\mathtt{G}\mathtt{G}\mathtt{T}\mathtt{A}\,,\,\mathtt{C}\mathtt{G}\varepsilon\mathtt{T}\mathtt{T}+\mathtt{C}\mathtt{C}\mathtt{T}\mathtt{A}\mathtt{T}\,,\,\mathtt{T}\mathtt{A}\mathtt{G}+\varepsilon\mathtt{C}\mathtt{A})$. Here, genetic polymorphism is no longer unary.
\end{example}

\begin{remark}[Haplotypes \emph{versus} recombination]
While the concept of haplotype is possible because of the absence of recombination in the transfer of the mtDNA \cite{Kivisild}, it makes perfect sense to see it arise from an object that is meant to model recombination. As illustrated in Example \ref{exa:Haplogroups_biology}, this absence of recombination is modelled by singleton elements, which characterize the haplotype, whereas the non-singleton elements (i.e. the sums) encode the genetic polymorphism associated with a population. In general, the classification of haplogroups relies on single-nucleotide polymorphisms (SNP) while the polymorphism  pertaining to rather-long DNA intervals is usually the type of variations that is studied from the point of view of homologous recombination \cite{Haldane}.
\end{remark}

\begin{remark}[Congruence]
For every wide span $\mathbf{S}$ in $\mathbf{Set}$, the recombination congruence $G(\mathbf{S})\rightrightarrows F(S)$ defines an actual congruence (i.e. an equivalence relation) in $B_2\textrm{-}\mathbf{Mod}$. Checking that the pullback $G(\mathbf{S})$ is a sub-object of the product $F(S)\times F(S)$ is straightforward. The reflexivity, symmetry and transitivity axioms follow after noticing that the pullback is defined over a cospan made of two copies of the same arrow (see Definition \ref{def:Recomb_congruences}).
\end{remark}

\begin{convention}[Notation]
Let $\mathbf{S}$ be a wide span in $\mathbf{Set}$. For every pair of elements $x,y \in F(S)$, we shall write $x \sim_{\mathbf{S}} y$ to mean that the pair $(x,y)$ belongs to $G(\mathbf{S})$.
\end{convention}

\begin{example}[Haplotypes]\label{exa:Haplogroups_x_x_x+y}
By definition of $G(\mathbf{S})$, the two elements $x$ and $y$ given in Example \ref{exa:Recombination_congruences_pi_family} are also equivalent to the sum $x+y$ displayed below.
\[
(\mathtt{A}\mathtt{C}\mathtt{G}\mathtt{A},\mathtt{C}\mathtt{G}\varepsilon\mathtt{T}\mathtt{T},\mathtt{T}\mathtt{A}\mathtt{G}) + (\mathtt{G}\mathtt{G}\mathtt{T}\mathtt{A},\mathtt{C}\mathtt{C}\mathtt{T}\mathtt{A}\mathtt{T},\varepsilon\mathtt{C}\mathtt{A})+(\mathtt{G}\mathtt{G}\mathtt{T}\mathtt{A},\mathtt{C}\mathtt{G}\varepsilon\mathtt{T}\mathtt{T},\varepsilon\mathtt{C}\mathtt{A}) + (\mathtt{A}\mathtt{C}\mathtt{G}\mathtt{A},\mathtt{C}\mathtt{C}\mathtt{T}\mathtt{A}\mathtt{T},\mathtt{T}\mathtt{A}\mathtt{G})
\]
To see this, observe that we have the relations $x \sim_{\mathbf{S}} y$ and $x \sim_{\mathbf{S}} x$. The fact that $G(\mathbf{S})$ is a $B_2$-semimodule then gives us the relation $x+x \sim_{\mathbf{S}} x + y$. Since the equation $x+x = x$ holds, we obtain the relation $x \sim_{\mathbf{S}} x+y$. Similarly, we can show that the relation $y \sim_{\mathbf{S}} x+y$ holds too.
Intuitively, this means that if $x$ and $y$ are two populations of the same haplotype (i.e. $x \sim_{\mathbf{S}} y$), then the union of these is of the same haplotype.
\end{example}

The argument of Example \ref{exa:Haplogroups_x_x_x+y} could be formalized in terms of a partial order on the so-called haplogroups so that the sum of all the elements contained in the equivalence class of a recombination congruence is the maximum haplogroup, which could be viewed as the haplotype itself. The rest of this section shows that this representative is obtained from a product operation (see Convention \ref{conv:recombination_congruence}) on the local patches of every haplogroup contained in the equivalence class.

\begin{convention}[Notation]\label{conv:recombination_congruence}
For every wide span $\mathbf{S}$, we will denote by $\beta_{\mathbf{S}}$ the function $U(F(\mathbf{S})^{\times}) \to UF(\mathbf{S}^{\times})$ that maps a tuple $(x_1,x_2,\dots,x_{|\mathbf{S}|})$ in $F(\mathbf{S})^{\times}$ to the element of $F(\mathbf{S}^{\times})$ represented by the finite subset $\mathsf{Supp}(x_1)\times \mathsf{Supp}(x_2) \times \dots \times \mathsf{Supp}(x_{|\mathbf{S}|}) \subseteq \mathbf{S}^{\times}$. 
\end{convention}

\begin{example}[Sequel of Example \ref{exa:Recombination_congruences_pi_family}]\label{exa:Recombination_congruences_pi_family_cross}
Let $\mathbf{S}$ be the Cartesian wide span $E_b^{\varepsilon}(\rho)$ given in Example \ref{exa:Relative_definition_families}. If we take $x \in F(\mathbf{S}^{\times})$ to be the element considered in Example \ref{exa:Recombination_congruences_pi_family}, then the element
\[
\beta_{\mathbf{S}}(\pi_{\mathbf{S}}(x)) = \{\mathtt{A}\mathtt{C}\mathtt{G}\mathtt{A},\mathtt{G}\mathtt{G}\mathtt{T}\mathtt{A}\} \times \{\mathtt{C}\mathtt{G}\varepsilon\mathtt{T}\mathtt{T},\mathtt{C}\mathtt{C}\mathtt{T}\mathtt{A}\mathtt{T}\}\times\{\mathtt{T}\mathtt{A}\mathtt{G},\varepsilon\mathtt{C}\mathtt{A}\}
\]
is represented in $F(\mathbf{S}^{\times})$ by the following subset of $\mathbf{S}^{\times}$.
\[
\left\{
\begin{array}{llll}
(\mathtt{A}\mathtt{C}\mathtt{G}\mathtt{A},\mathtt{C}\mathtt{G}\varepsilon\mathtt{T}\mathtt{T},\mathtt{T}\mathtt{A}\mathtt{G})&
(\mathtt{A}\mathtt{C}\mathtt{G}\mathtt{A},\mathtt{C}\mathtt{G}\varepsilon\mathtt{T}\mathtt{T},\varepsilon\mathtt{C}\mathtt{A})&
(\mathtt{A}\mathtt{C}\mathtt{G}\mathtt{A},\mathtt{C}\mathtt{C}\mathtt{T}\mathtt{A}\mathtt{T},\mathtt{T}\mathtt{A}\mathtt{G})&
(\mathtt{A}\mathtt{C}\mathtt{G}\mathtt{A},\mathtt{C}\mathtt{C}\mathtt{T}\mathtt{A}\mathtt{T},\varepsilon\mathtt{C}\mathtt{A})\\
(\mathtt{G}\mathtt{G}\mathtt{T}\mathtt{A},\mathtt{C}\mathtt{G}\varepsilon\mathtt{T}\mathtt{T},\mathtt{T}\mathtt{A}\mathtt{G})&
(\mathtt{G}\mathtt{G}\mathtt{T}\mathtt{A},\mathtt{C}\mathtt{G}\varepsilon\mathtt{T}\mathtt{T},\varepsilon\mathtt{C}\mathtt{A})&
(\mathtt{G}\mathtt{G}\mathtt{T}\mathtt{A},\mathtt{C}\mathtt{C}\mathtt{T}\mathtt{A}\mathtt{T},\mathtt{T}\mathtt{A}\mathtt{G})&
(\mathtt{G}\mathtt{G}\mathtt{T}\mathtt{A},\mathtt{C}\mathtt{C}\mathtt{T}\mathtt{A}\mathtt{T},\varepsilon\mathtt{C}\mathtt{A})\\
\end{array}
\right\}
\]
Interestingly, the image of this element via the canonical arrow $F(\mathbf{S}^{\times}) \to F(\mathbf{S})^{\times}$, which can here be viewed as the map $\pi_{\mathbf{S}}:FE_b^{\varepsilon}(\tau) \to F(\mathbf{S})^{\times}$ by Remark \ref{rem:Recombination_congruence_cartesian_wide_span}, is equal to the following tuple.
\[
\Big(\,\mathtt{A}\mathtt{C}\mathtt{G}\mathtt{A}+\mathtt{G}\mathtt{G}\mathtt{T}\mathtt{A}\,,\,\mathtt{C}\mathtt{G}\varepsilon\mathtt{T}\mathtt{T}+\mathtt{C}\mathtt{C}\mathtt{T}\mathtt{A}\mathtt{T}\,,\,\mathtt{T}\mathtt{A}\mathtt{G}+\varepsilon\mathtt{C}\mathtt{A}\,\Big)
\]
By Example \ref{exa:Recombination_congruences_pi_family}, this tuple is equal to the element $\pi_{\mathbf{S}}(x)$, which implies that the identity $\pi_{\mathbf{S}}(x) = \pi_{\mathbf{S}}\beta_{\mathbf{S}}(\pi_{\mathbf{S}}(x))$ holds. In terms of haplogroups, this means that the two populations represented by $x$ and $\beta_{\mathbf{S}}(\pi_{\mathbf{S}}(x))$ have the same haplotype (i.e. $x \sim_{\mathbf{S}} \beta_{\mathbf{S}}(\pi_{\mathbf{S}}(x))$). Proposition \ref{prop:maximum _in_the_class}, given below, explains why such a relation holds in a greater generality.
\end{example}

\begin{convention}[Notation]\label{conv:definition_bot_function}
For every wide span $\mathbf{S}$ in $\mathbf{Set}$, we will denote by $\bot$ the function $U(F(\mathbf{S})^{\times}) \to U(F(\mathbf{S})^{\times})$ that maps a tuple $(x_1,x_2,\dots,x_{|\mathbf{S}|}) \in F(\mathbf{S})^{\times}$ to 0 if the tuple contains a zero component and that maps the tuple to itself otherwise.
\end{convention}

\begin{remark}\label{rem:bottom_and_pi_S}
It follows from Conventions \ref{conv:definition_bot_function} \& \ref{conv:definition_pi_S} that the following diagram commutes.
\[
\xymatrix{
UF(S) \ar[r]^-{U\pi_{\mathbf{S}}}\ar@/^20pt/[rr]^-{U\pi_{\mathbf{S}}} & U(F(\mathbf{S})^{\times}) \ar[r]^-{\bot} & U(F(\mathbf{S})^{\times})
}
\]
This comes from the fact that the only element of $F(S)$ whose image, in $F(\mathbf{S})^{\times}$, contains a zero component is the zero element of $F(S)$.
\end{remark}

\begin{proposition}\label{prop:maximum _in_the_class}
Let $\mathbf{S}$ be a wide span of sets. The following diagram commutes in $\mathbf{Set}$, where the rightmost vertical arrow is the image of the obvious arrow $F(\mathbf{S}^{\times}) \to F(\mathbf{S})^{\times}$ via the functor $U$.
\[
\xymatrix@-10pt{
U(F(\mathbf{S})^{\times}) \ar[rd]_{\bot}\ar[r]^-{\beta_{\mathbf{S}}}&  UF(\mathbf{S}^{\times})\ar[d]\\
&U(F(\mathbf{S})^{\times})
}
\]
\end{proposition}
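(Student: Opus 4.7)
The plan is to verify commutativity pointwise by unpacking the three maps and splitting on whether the input tuple has a zero component. First I would pin down the canonical arrow $F(\mathbf{S}^{\times}) \to F(\mathbf{S})^{\times}$: by the universal property of the product $F(\mathbf{S})^{\times}$, it is the pairing of the morphisms $F(\pi_i)$ for $i \in [\mathbf{S}]$, where $\pi_i \colon \mathbf{S}^{\times} \to S_i$ denotes the $i$-th projection. Using the description of free $B_2$-semimodules as finite subsets (Remark \ref{rem:description_free_b_2_semimodule}) together with $B_2$-linearity, this arrow sends a finite subset $T \subseteq \mathbf{S}^{\times}$, viewed as an element of $F(\mathbf{S}^{\times})$, to the tuple $(\pi_1(T), \dots, \pi_k(T))$ in $F(S_1)\times\cdots\times F(S_k)$, where $k := |\mathbf{S}|$.

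Next I would fix an element $x=(x_1,\dots,x_k) \in U(F(\mathbf{S})^{\times})$ and chase it along the top-right path. By Convention \ref{conv:recombination_congruence}, $\beta_{\mathbf{S}}(x)$ is represented by $T := \mathsf{Supp}(x_1) \times \cdots \times \mathsf{Supp}(x_k)$, so the $i$-th component of its image in $F(\mathbf{S})^{\times}$ is $\pi_i(T)$. I would then split into two cases. If some $x_j$ equals $0$, then $\mathsf{Supp}(x_j) = \emptyset$ by Convention \ref{conv:support}, hence $T = \emptyset$ and $\pi_i(T) = \emptyset$ for every $i$, giving the zero tuple of $F(\mathbf{S})^{\times}$; this matches $\bot(x) = 0$ by Convention \ref{conv:definition_bot_function}. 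Otherwise every $\mathsf{Supp}(x_i)$ is non-empty, which forces $\pi_i(T) = \mathsf{Supp}(x_i)$, and $\mathsf{Supp}(x_i)$ corresponds to $x_i$ under the support representation; the resulting tuple is therefore $(x_1,\dots,x_k)$, which is again $\bot(x)$.

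The only mild obstacle is the first case, where one must notice that a single empty factor collapses the whole product $T$ to $\emptyset$, producing zeros in \emph{every} coordinate simultaneously; this asymmetric collapse is precisely what the definition of $\bot$ is designed to mirror. Everything else is bookkeeping: the identification of the canonical arrow with the tuple of pushforwards $F(\pi_i)$ and the translation between the support representation and the formal-sum representation of elements of $F(S_i)$.
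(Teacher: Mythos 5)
Your proof is correct and follows essentially the same route as the paper's: identify the canonical arrow $F(\mathbf{S}^{\times}) \to F(\mathbf{S})^{\times}$ componentwise, chase a tuple $\tilde{x}$ through $\beta_{\mathbf{S}}$ as the product of supports, and split on whether some component is zero (in which case the whole product collapses to the empty set) or all are non-zero (in which case each projection recovers $\mathsf{Supp}(x_i)$, hence $x_i$). The only difference is that you spell out the identification of the canonical arrow with the pairing of the $F(\pi_i)$ more explicitly than the paper does, which is harmless.
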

\begin{proof}
Suppose that $\mathbf{S}$ is of the form $\{S \to S_i\}_{i \in [\mathbf{S}]}$ 
and take $\tilde{x}=(x_1,x_2,\dots,x_{|\mathbf{S}|})$ to be a tuple of $F(\mathbf{S})^{\times}$. 
If the components of $\tilde{x}$ are all non-zero elements, the arrow $F(\mathbf{S}^{\times}) \to F(S_i)$ sends the element $\beta_{\mathbf{S}}(\tilde{x})$ represented by the subset $\mathsf{Supp}(x_1)\times \mathsf{Supp}(x_2) \times \dots \times \mathsf{Supp}(x_{|\mathbf{S}|}) \subseteq \mathbf{S}^{\times}$ to the element represented by the subset $\mathsf{Supp}(x_i) \subseteq S_i$. Because this element can be identified as $x_i$ itself, this means that the arrow $F(\mathbf{S}^{\times}) \to F(\mathbf{S})^{\times}$ sends $\beta_{\mathbf{S}}(\tilde{x})$ to the tuple $\tilde{x}$. 
If there exists $k \in [\mathbf{S}]$ such that $x_k$ is zero, then the definition of a Cartesian product on an empty set implies that $\beta_{\mathbf{S}}(\tilde{x})$ is zero, so that its image via the arrow $F(\mathbf{S}^{\times}) \to F(S_i)$ is also zero. In this case, the arrow $F(\mathbf{S}^{\times}) \to F(\mathbf{S})^{\times}$ sends $\beta_{\mathbf{S}}(\tilde{x})$ to zero.
\end{proof}

The reader can easily verify the following statement in the case of Examples \ref{exa:Recombination_congruences_pi_family} \& \ref{exa:Recombination_congruences_pi_family_cross}.

\begin{proposition}\label{prop:Link_supports_pi_times}
For every wide span $\mathbf{S}$ of sets and element $x \in F(\mathbf{S}^{\times})$, the inclusion $\mathsf{Supp}(x) \subseteq \beta_{\mathbf{S}}(\pi_{\mathbf{S}}(x))$ holds.
\end{proposition}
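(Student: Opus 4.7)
The approach is a direct verification by chasing an element of $\mathsf{Supp}(x)$ through the definitions of $\pi_{\mathbf{S}}$ and $\beta_{\mathbf{S}}$; conceptually the statement amounts to the set-theoretic triviality that any subset of a Cartesian product is contained in the Cartesian product of its coordinate projections.

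First I would unpack $\pi_{\mathbf{S}}(x)$. Writing $\mathbf{S}=\{S \to S_i\}_{i \in [\mathbf{S}]}$ and letting $\pi_i : \mathbf{S}^{\times} \to S_i$ denote the canonical projection, I would regard $\pi_{\mathbf{S}}$ as the canonical morphism $F(\mathbf{S}^{\times}) \to F(\mathbf{S})^{\times}$, which is the tupling of the maps $F(\pi_i)$. Since $F$ sends a set-function to the induced image-of-finite-subsets map, the key identification
\[
\mathsf{Supp}\bigl(F(\pi_i)(x)\bigr) \;=\; \pi_i(\mathsf{Supp}(x))
\]
follows immediately for every $i \in [\mathbf{S}]$.

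Second I would apply Convention \ref{conv:recombination_congruence} to realise $\beta_{\mathbf{S}}(\pi_{\mathbf{S}}(x)) \in F(\mathbf{S}^{\times})$ as the subset $\prod_{i \in [\mathbf{S}]} \pi_i(\mathsf{Supp}(x)) \subseteq \mathbf{S}^{\times}$. It then suffices to take an arbitrary $\sigma = (\sigma_1,\ldots,\sigma_{|\mathbf{S}|}) \in \mathsf{Supp}(x)$ and observe that each coordinate $\sigma_i = \pi_i(\sigma)$ automatically lies in $\pi_i(\mathsf{Supp}(x))$, so that $\sigma$ belongs to the above product. Via the set representation of Convention \ref{conv:support}, this is precisely the inclusion claimed.

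No step is expected to be a genuine obstacle; the only subtle point is interpretative, namely that one must read the inclusion of the statement via the support representation, identifying the element $\beta_{\mathbf{S}}(\pi_{\mathbf{S}}(x)) \in F(\mathbf{S}^{\times})$ with its underlying finite subset of $\mathbf{S}^{\times}$. The inclusion is strict in general, and the discrepancy is precisely the one observed in Example \ref{exa:Recombination_congruences_pi_family_cross}: the composite $\beta_{\mathbf{S}} \circ \pi_{\mathbf{S}}$ completes $\mathsf{Supp}(x)$ into the full Cartesian product of its coordinate projections.
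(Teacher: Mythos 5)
Your proof is correct and follows essentially the same route as the paper's: the paper writes $x$ as a formal sum $\sum_{k=1}^n s_k$, identifies $\beta_{\mathbf{S}}(\pi_{\mathbf{S}}(x))$ with the product $\prod_j \{s_k(j)\mid k\in[n]\}$ of coordinate projections of $\mathsf{Supp}(x)$, and observes that each $s_k$ lies in that product, which is precisely your coordinate-chasing argument phrased with supports. No substantive difference.
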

\begin{proof}
Suppose that $\mathbf{S}$ is of the form $\{S \to S_i\}_{i \in [k]}$. For any element $s \in \mathbf{S}^{\times}$, we will use the notation $s(j)$ to denote the image of $s$ via the $j$-th projection $\mathbf{S}^{\times} \to S_j$. Without loss of generality, every element $x \in F(\mathbf{S}^{\times})$ can be supposed to be of the form $\sum_{k=1}^n s_k$ where $s_k \in \mathbf{S}^{\times}$. Now, observe that, for every $k \in \{1,\dots,n\}$, the tuple $(s_k(1),\dots,s_k(|\mathbf{S}|))$, which can be viewed as the element $s_k \in \mathbf{S}^{\times}$ itself, must belong to the set 
\[
\beta_{\mathbf{S}}(\pi_{\mathbf{S}}(x))=\{s_k(1)~|~k \in [n]\}\times \{s_k(2)~|~k \in [n]\} \times \dots \times \{s_k(|\mathbf{S}|)~|~k \in [n]\},
\]
so that the inclusion $\mathsf{Supp}(x) \subseteq \beta_{\mathbf{S}}(\pi_{\mathbf{S}}(x))$ holds.
\end{proof}

The following definition recalls what a maximum, in a partially ordered set, is.

\begin{definition}[Maximum]
Let $(G,\leq)$ be a partially ordered set. An element $u \in G$ will be called a \emph{maximum} if for all $x \in S$, the relation $x \leq u$ holds;
\end{definition}

\begin{remark}[Maxima are unique]
It follows from the anti-symmetry axiom of partial orders that if a partially ordered set $(G,\leq)$ admits a maximum, then there is no other maximum. In other words, the maximum of $G$ is unique.
\end{remark}

\begin{theorem}[Haplotypes]\label{prop:maximum_equivalence_class_recombination_congruence}
Let $\mathbf{S}$ be a Cartesian wide span in $\mathbf{Set}$. Every equivalence class of $G(\mathbf{S})$ admits a maximum for the partial order of Proposition \ref{prop:sub-element_B_2_semimodules_partial_order}.
The maximum in the equivalence class of an element $x$ is given by the element in $F(S)$ that is represented by the subset $\beta_{\mathbf{S}}\pi_{\mathbf{S}}(x) \subseteq \mathbf{S}^{\times}$ in $F(\mathbf{S}^{\times})$ throught the isomorphism $F(S) \to F(\mathbf{S}^{\times})$.
\end{theorem}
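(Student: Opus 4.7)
The plan is to identify $\beta_{\mathbf{S}}\pi_{\mathbf{S}}(x)$, viewed through the isomorphism $F(S) \to F(\mathbf{S}^{\times})$ supplied by the Cartesian hypothesis, as the desired maximum, and to verify the two defining conditions: that it lies in the equivalence class of $x$, and that it dominates every other element of that class under $\leq$. Throughout, I will use Remark \ref{rem:Recombination_congruence_cartesian_wide_span} to identify $\pi_{\mathbf{S}}:F(S) \to F(\mathbf{S})^{\times}$ with the canonical arrow $F(\mathbf{S}^{\times}) \to F(\mathbf{S})^{\times}$ after transporting along the isomorphism, so that $\beta_{\mathbf{S}}$ and $\pi_{\mathbf{S}}$ can be composed freely.

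For membership in the equivalence class, it suffices by Definition \ref{def:Recomb_congruences} to check that $\pi_{\mathbf{S}}(\beta_{\mathbf{S}}\pi_{\mathbf{S}}(x)) = \pi_{\mathbf{S}}(x)$. Proposition \ref{prop:maximum _in_the_class} rewrites the composition $\pi_{\mathbf{S}} \circ \beta_{\mathbf{S}}$ (after the identification above) as the operator $\bot$ on $U(F(\mathbf{S})^{\times})$, and Remark \ref{rem:bottom_and_pi_S} collapses $\bot \circ \pi_{\mathbf{S}}$ back to $\pi_{\mathbf{S}}$. Chaining these two facts yields $\pi_{\mathbf{S}}\beta_{\mathbf{S}}\pi_{\mathbf{S}}(x) = \bot\pi_{\mathbf{S}}(x) = \pi_{\mathbf{S}}(x)$, hence $\beta_{\mathbf{S}}\pi_{\mathbf{S}}(x) \sim_{\mathbf{S}} x$.

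For maximality, I fix an arbitrary $y \in F(S)$ with $y \sim_{\mathbf{S}} x$, i.e.\ $\pi_{\mathbf{S}}(y) = \pi_{\mathbf{S}}(x)$. Viewing $y$ in $F(\mathbf{S}^{\times})$ via the isomorphism, Proposition \ref{prop:Link_supports_pi_times} applied to $y$ delivers $\mathsf{Supp}(y) \subseteq \beta_{\mathbf{S}}\pi_{\mathbf{S}}(y) = \beta_{\mathbf{S}}\pi_{\mathbf{S}}(x)$. Since the support of the element $\beta_{\mathbf{S}}\pi_{\mathbf{S}}(x) \in F(\mathbf{S}^{\times})$ is, by Convention \ref{conv:recombination_congruence} together with Convention \ref{conv:support}, nothing other than that very subset of $\mathbf{S}^{\times}$, the inclusion $\mathsf{Supp}(y) \subseteq \mathsf{Supp}(\beta_{\mathbf{S}}\pi_{\mathbf{S}}(x))$ holds, which by Definition \ref{def:sub-element_leq} means $y \leq \beta_{\mathbf{S}}\pi_{\mathbf{S}}(x)$. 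Combined with the first step, this exhibits $\beta_{\mathbf{S}}\pi_{\mathbf{S}}(x)$ as the maximum, and its uniqueness follows from antisymmetry of the partial order of Proposition \ref{prop:sub-element_B_2_semimodules_partial_order}.

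The hard part is not any deep calculation but rather the bookkeeping required to switch between three pictures of a given element: a formal sum in $F(S)$, a finite subset in $F(\mathbf{S}^{\times})$, and a tuple in $F(\mathbf{S})^{\times}$. Once the translation dictionary provided by the Cartesian isomorphism is fixed, and one is careful to note that the support of $\beta_{\mathbf{S}}(\tilde{x})$ as an element of $F(\mathbf{S}^{\times})$ coincides literally with the Cartesian product of the supports that defines $\beta_{\mathbf{S}}(\tilde{x})$, each auxiliary result slots directly into place and the theorem reduces to a two-line appeal to Propositions \ref{prop:maximum _in_the_class} and \ref{prop:Link_supports_pi_times} together with Remark \ref{rem:bottom_and_pi_S}.
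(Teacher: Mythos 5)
Your proposal is correct and follows essentially the same route as the paper's proof: membership in the class via Proposition \ref{prop:maximum _in_the_class} and Remark \ref{rem:bottom_and_pi_S}, and maximality via Proposition \ref{prop:Link_supports_pi_times} together with Definition \ref{def:sub-element_leq}. You simply spell out the bookkeeping between $F(S)$, $F(\mathbf{S}^{\times})$ and $F(\mathbf{S})^{\times}$ that the paper leaves implicit.
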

\begin{proof}
Proposition \ref{prop:maximum _in_the_class} and Remark \ref{rem:bottom_and_pi_S} implies the equation $\pi_{\mathbf{S}}(x) = \pi_{\mathbf{S}}(\beta_{\mathbf{S}}\pi_{\mathbf{S}}(x))$ and hence the relation $x \sim_{\mathbf{S}}\beta_{\mathbf{S}}\pi_{\mathbf{S}}(x)$. The fact that $\beta_{\mathbf{S}}\pi_{\mathbf{S}}(x)$ is a maximum follows from Definition \ref{def:sub-element_leq} and Proposition \ref{prop:Link_supports_pi_times}.
\end{proof}

\begin{example}[Haplogroups]\label{exa:saturated_haplogroup}
From the point of view of Examples \ref{exa:Recombination_congruences_pi_family_cross} \& \ref{exa:Haplogroups_x_x_x+y}, the maximum element of Proposition \ref{prop:maximum_equivalence_class_recombination_congruence} is what we could call the \emph{saturated} haplogroup of $x$ (or $y$), that is to say the element represented by the following sum in $F(\mathbf{S}^{\times})$.
\[
\begin{array}{c}
(\mathtt{A}\mathtt{C}\mathtt{G}\mathtt{A},\mathtt{C}\mathtt{G}\varepsilon\mathtt{T}\mathtt{T},\mathtt{T}\mathtt{A}\mathtt{G})+
(\mathtt{A}\mathtt{C}\mathtt{G}\mathtt{A},\mathtt{C}\mathtt{G}\varepsilon\mathtt{T}\mathtt{T},\varepsilon\mathtt{C}\mathtt{A})+
(\mathtt{A}\mathtt{C}\mathtt{G}\mathtt{A},\mathtt{C}\mathtt{C}\mathtt{T}\mathtt{A}\mathtt{T},\mathtt{T}\mathtt{A}\mathtt{G})+
(\mathtt{A}\mathtt{C}\mathtt{G}\mathtt{A},\mathtt{C}\mathtt{C}\mathtt{T}\mathtt{A}\mathtt{T},\varepsilon\mathtt{C}\mathtt{A}) +\\
(\mathtt{G}\mathtt{G}\mathtt{T}\mathtt{A},\mathtt{C}\mathtt{G}\varepsilon\mathtt{T}\mathtt{T},\mathtt{T}\mathtt{A}\mathtt{G})+
(\mathtt{G}\mathtt{G}\mathtt{T}\mathtt{A},\mathtt{C}\mathtt{G}\varepsilon\mathtt{T}\mathtt{T},\varepsilon\mathtt{C}\mathtt{A})+
(\mathtt{G}\mathtt{G}\mathtt{T}\mathtt{A},\mathtt{C}\mathtt{C}\mathtt{T}\mathtt{A}\mathtt{T},\mathtt{T}\mathtt{A}\mathtt{G})+
(\mathtt{G}\mathtt{G}\mathtt{T}\mathtt{A},\mathtt{C}\mathtt{C}\mathtt{T}\mathtt{A}\mathtt{T},\varepsilon\mathtt{C}\mathtt{A})\\
\end{array}
\]
\end{example}

\begin{remark}\label{rem:if_wide_span_not_Cartesian}
If the wide span $\mathbf{S}$ of Proposition \ref{prop:maximum_equivalence_class_recombination_congruence} were not Cartesian, then nothing would ensure us that the saturated haplogroup of Example \ref{exa:saturated_haplogroup} can be lifted to $FE_b^{\varepsilon}(\tau)$. The maximum representative of the equivalence class would then be missing certain elements of the saturated haplogroup so that it could not be seen as coming from a product operation.
\end{remark}

\subsection{Recombination semimodules}\label{ssec:recombination_semimodules}
Let $(\Omega,\preceq)$ be a pre-ordered set. We shall let $(\Omega,D)$ denote a finite chromology and $X$ be a functor $\mathbf{Seg}(\Omega) \to \mathbf{Set}$. 

Recall that, on the one hand, for every wide span $\mathbf{S}$ of sets, we have the following pair of projections associated with the congruence $G(\mathbf{S})$.
\[
\xymatrix{
G(\mathbf{S}) \ar@<-1ex>[r]_{\mathsf{prj}_2}\ar@<+1ex>[r]^{\mathsf{prj}_1}&F(S) 
}
\]
On the other hand, we have the recombination cones of $X$ (Definition \ref{def:Relative_definition_families}), which give us a wide span $X(\rho)$ of sets for every cone in $\rho$ in $D$. It therefore comes quite naturally to our mind that we could coequalize the previous pair of arrows with respect to the wide spans $X(\rho)$ to create a functor $\mathbf{Seg}(\Omega) \to B_2\textrm{-}\mathbf{Mod}$. Unfortunately, the mapping $\rho \mapsto X(\rho)$ is unlikely to be functorial, so that we need to work a little bit before obtaining such a functor. Specifically, we need to consider the pair of arrows resulting from the following diagram, in which the symbols $\oplus$ denote coproducts in $B_2\textrm{-}\mathbf{Mod}$ over the finite set of triples $(\upsilon,\rho,f)$ where $\upsilon$ is an object in $\mathbf{Seg}(\Omega)$, $\rho$ is a cone of the form $\Delta_A(\upsilon) \Rightarrow \theta$ in $D$ and $f$ is an arrow $\upsilon \to \tau$ in $\mathbf{Seg}(\Omega)$.
\begin{equation}\label{eq:coequalizer_mod_pedigrad}
\xymatrix@C+18pt{
\mathop{\bigoplus}\limits_{\upsilon,\rho,f:\upsilon \to \tau} GX(\rho)\ar@<-1ex>[r]_-{\mathsf{prj}_2}\ar@<+1ex>[r]^-{\mathsf{prj}_1}&\mathop{\bigoplus}\limits_{\upsilon,\rho,f:\upsilon \to \tau}  FX(\upsilon) \ar[rr]^-{\mathop{\oplus}\limits_{\upsilon,\rho,f}FX(f)}&& FX(\tau)
}
\end{equation}
Then, for every morphism $g:\tau \to \tau'$, we can define the following diagram, which makes the coequalizer of (\ref{eq:coequalizer_mod_pedigrad}) an obvious functor on $\mathbf{Seg}(\Omega)$. 
\[
\xymatrix@C+18pt{
\mathop{\bigoplus}\limits_{\upsilon,\rho,f:\upsilon \to \tau} GX(\upsilon)\ar[d]_{\subset}\ar@<-1ex>[r]_-{\mathsf{prj}_2}\ar@<+1ex>[r]^-{\mathsf{prj}_1}&\mathop{\bigoplus}\limits_{\upsilon,\rho,f:\upsilon \to \tau}  FX(\upsilon) \ar[rr]^-{\mathop{\oplus}\limits_{\upsilon,\rho,f}FX(f)}\ar[d]_{\subset}\ar@{..>}[rrd]|{\mathop{\oplus}\limits_{\upsilon,\rho,f}FX(g \circ f)}&& FX(\tau)\ar[d]^{FX(g)}\\
\mathop{\bigoplus}\limits_{\upsilon,\rho,f:\upsilon \to \tau'} GX(\upsilon)\ar@<-1ex>[r]_-{\mathsf{prj}_2}\ar@<+1ex>[r]^-{\mathsf{prj}_1}&\mathop{\bigoplus}\limits_{\upsilon,\rho,f:\upsilon \to \tau'}  FX(\upsilon) \ar[rr]_-{\mathop{\oplus}\limits_{\upsilon,\rho,f}FX(f)}&& FX(\tau')
}
\]

\begin{definition}[Recombination semimodules]\label{def:Canonical_pedigrads_in_semimodules}
For every finite chromology $(\Omega,D)$ and object $\tau$ in $\mathbf{Seg}(\Omega)$, we will denote by $DX(\tau)$ the coequalizer of (\ref{eq:coequalizer_mod_pedigrad}). The associated functor $DX:\mathbf{Seg}(\Omega) \to B_2\textrm{-}\mathbf{Mod}$ will be called the \emph{recombination semimodule over $X$}
\end{definition}

\begin{convention}[Coequalizer map]\label{conv:coequalizer_map}
For every functor $X:\mathbf{Seg}(\Omega) \to \mathbf{Set}$, the coequalizer map $FX \to DX$ associated with the coequalizer of diagram (\ref{eq:coequalizer_mod_pedigrad}) will be denoted as $q_{X}$.
\end{convention}

\begin{example}[Recombination semimodule for DNA]\label{exa:Recombination_semimodule_for_DNA}
Let $(E,\varepsilon)$ be our usual pointed set $\{\mathtt{A},\mathtt{C},\mathtt{G},\mathtt{T},\varepsilon\}$ and suppose that $\Omega$ is the Boolean pre-ordered set $\{0 \leq 1\}$. As usual, we will let $b \in \Omega$ be equal to 1. We here discuss the form of the recombination semimodule defined over the functor $E_{b}^{\varepsilon}:\mathbf{Seg}(\Omega) \to \mathbf{Set}$.
\smallskip

If we suppose that the cone $\rho:\Delta_A(\tau) \Rightarrow \theta$ given in Example \ref{exa:Relative_definition_families} is one of the cones contained in $D$, then the coequalizer map $q_{E_{b}^{\varepsilon}}:FE_{b}^{\varepsilon}(\tau) \to DE_{b}^{\varepsilon}(\tau)$ will identify the two elements $x$ and $y$ of Example \ref{exa:Recombination_congruences_pi_family}, which are distinct elements of $FE_{b}^{\varepsilon}(\tau)$, as the same element in $DE_{b}^{\varepsilon}(\tau)$. In other words, the following identity will hold
in $DE_{b}^{\varepsilon}(\tau)$.
\[
(\mathtt{A}\mathtt{C}\mathtt{G}\mathtt{A},\mathtt{C}\mathtt{G}\varepsilon\mathtt{T}\mathtt{T},\mathtt{T}\mathtt{A}\mathtt{G}) + (\mathtt{G}\mathtt{G}\mathtt{T}\mathtt{A},\mathtt{C}\mathtt{C}\mathtt{T}\mathtt{A}\mathtt{T},\varepsilon\mathtt{C}\mathtt{A})=(\mathtt{G}\mathtt{G}\mathtt{T}\mathtt{A},\mathtt{C}\mathtt{G}\varepsilon\mathtt{T}\mathtt{T},\varepsilon\mathtt{C}\mathtt{A}) + (\mathtt{A}\mathtt{C}\mathtt{G}\mathtt{A},\mathtt{C}\mathtt{C}\mathtt{T}\mathtt{A}\mathtt{T},\mathtt{T}\mathtt{A}\mathtt{G})
\]
More generally, the coequalizer of diagram (\ref{eq:coequalizer_mod_pedigrad}) will force any pair $(x,y)$ contained in the recombination congruences resulting from the recombination cones of $E_{b}^{\varepsilon}$ (see Definition \ref{def:Relative_definition_families}) to be identified (i.e. $x = y$) in the recombination semimodule $DE_{b}^{\varepsilon}(\tau)$.

When the functor $E_{b}^{\varepsilon}:\mathbf{Seg}(\Omega) \to \mathbf{Set}$ is a $\mathcal{W}^{\textrm{bij}}$-pedigrad and $\rho$ is the only cone of the ambient chromology, Theorem \ref{prop:maximum_equivalence_class_recombination_congruence}  ensures that the equivalence classes associated with these identifications can be represented by a product operation on the local patches of every element contained in these equivalence classes (see Example \ref{exa:saturated_haplogroup}). For chromologies containing more than one cone, the recombination congruences would mix each other so that they would generate recombination relations that could be more refined than those specified by the cones themselves. 
\end{example}

\subsection{Logical systems for homologous recombination}
We will denote by $\mathcal{W}^{\textrm{mon}}$ the class of wide spans $\mathbf{S}=\{S \to S_i\}_{i \in [k]}$ in $B_2\textrm{-}\mathbf{Mod}$ whose product adjoint arrows $S \to \mathbf{S}^{\times}$ is a monomorphism in $B_2\textrm{-}\mathbf{Mod}$.

\begin{remark}[Homologous recombination]
A $\mathcal{W}^{\textrm{mon}}$-pedigrad is a pedigrad in which the recombination congruences resulting from the logical system $(B_2\textrm{-}\mathbf{Mod},\mathcal{W}^{\textrm{mon}})$ can be seen as identities (see Proposition \ref{prop:representable_pedigrad_quotiented_by_recombinations}). Another way to put it is to say that a $\mathcal{W}^{\textrm{mon}}$-pedigrad is a pedigrad in which homologous recombination happens (see Example \ref{exa:Recombination_congruences_pi_family}).
\end{remark}

\subsection{Recombination schemes and pedigrads}\label{ssec:Recombination_schemes_and_pedigrads}
This section determines the right conditions to make a recombination semimodule, as given in Definition \ref{def:Canonical_pedigrads_in_semimodules}, a $\mathcal{W}^{\textrm{mon}}$-pedigrad.

\begin{definition}[Recombination prescheme]
We will call a \emph{recombination prescheme} any triple $(\Omega,D,X)$ where $(\Omega,D)$ is a finite chromology and $X$ is a functor $\mathbf{Seg}(\Omega) \to \mathbf{Set}$.
\end{definition}

\begin{example}[Recombination prescheme]\label{exa:Canonical_recombination_prescheme}
For every finite chromology $(\Omega,D)$, element $b \in \Omega$ and pointed set $(E,\varepsilon)$, the triple $(\Omega,D,E_b^{\varepsilon})$ is an obvious recombination prescheme.
\end{example}

\begin{definition}[Irreducibility]\label{def:irreducible_object}
Let $(\Omega,D,X)$ be a recombination prescheme. An object $\tau$ in $\mathbf{Seg}(\Omega)$ will be said to be \emph{irreducible} for $(\Omega,D,X)$ if for every arrow $f:\upsilon \to \tau$ in $\mathbf{Seg}(\Omega)$, the image $FX(f):FX(\upsilon) \to FX(\tau)$ coequalizes the following pair of arrows for every cone $\rho:\Delta_A(\upsilon) \Rightarrow \theta$ in $D$.
\[
\xymatrix@C+18pt{
GX(\rho)\ar@<-1ex>[r]_-{\mathsf{prj}_2}\ar@<+1ex>[r]^-{\mathsf{prj}_1}& FX(\upsilon) 
}
\]
\end{definition}

\begin{remark}[Coequalizing arrows]\label{rem:Coequalizing_arrows}
Observe that, from the point of view of Definition \ref{def:Recomb_congruences}, when $\mathbf{S}$ denotes a general wide span, the obvious composite $F(S) \to F(\mathbf{S})^{\times} \to F(S_i)$ coequalizes the recombination congruence $G(\mathbf{S}) \rightrightarrows F(S)$. This means that if $\mathbf{S}$ is now the wide span given by the $\rho$-recombination cone of a functor $X:\mathbf{Seg}(\Omega) \to \mathbf{Set}$ (Definition \ref{def:Relative_definition_families}) for some cone $\rho$ in $D$, then the morphism 
\[
FX(\rho_a):FX(\tau) \to FX(\theta(a))
\]
coequalizes the recombination congruence $GX(\rho) \rightrightarrows FX(\tau)$ for every $a \in A$. 
\end{remark}

\begin{example}[Coequalizing arrows]
If we let $\mathbf{S}$ denote the recombination cone $E_b^{\varepsilon}(\rho)$ of Example \ref{exa:Relative_definition_families}, then Remark \ref{rem:Coequalizing_arrows} implies that the recombination congruence $GE_b^{\varepsilon}(\rho) \rightrightarrows FE_b^{\varepsilon}(\tau)$ is coequalized by the image of the following arrows via the functor $FE_b^{\varepsilon}:\mathbf{Seg}(\Omega) \to \mathbf{Set}$.
\[
\begin{array}{l}
\rho_{a_1}:\xymatrix@C-30pt@R-20pt{
(\bullet&\bullet&\bullet)&(\circ&\circ)&(\bullet&\bullet&\bullet&\bullet)&(\bullet&\bullet&\bullet&\bullet&\bullet)&(\circ&\circ&\circ)&(\circ)\ar[rr]
&\quad\quad\quad&
(\circ&\circ&\circ)&(\circ&\circ)&(\bullet&\bullet&\bullet&\bullet)&(\circ&\circ&\circ&\circ&\circ)&(\circ&\circ&\circ)&(\circ)
}\\
\rho_{a_2}:\xymatrix@C-30pt@R-20pt{
(\bullet&\bullet&\bullet)&(\circ&\circ)&(\bullet&\bullet&\bullet&\bullet)&(\bullet&\bullet&\bullet&\bullet&\bullet)&(\circ&\circ&\circ)&(\circ)\ar[rr]
&\quad\quad\quad&
(\circ&\circ&\circ)&(\circ&\circ)&(\circ&\circ&\circ&\circ)&(\bullet&\bullet&\bullet&\bullet&\bullet)&(\circ&\circ&\circ)&(\circ)
}\\
\rho_{a_3}:\xymatrix@C-30pt@R-20pt{
(\bullet&\bullet&\bullet)&(\circ&\circ)&(\bullet&\bullet&\bullet&\bullet)&(\bullet&\bullet&\bullet&\bullet&\bullet)&(\circ&\circ&\circ)&(\circ)\ar[rr]
&\quad\quad\quad&
(\bullet&\bullet&\bullet)&(\circ&\circ)&(\circ&\circ&\circ&\circ)&(\circ&\circ&\circ&\circ&\circ)&(\circ&\circ&\circ)&(\circ)
}
\end{array}
\]
Indeed, we can check that these arrows would send the pair of equivalent elements
\[
x=(\mathtt{A}\mathtt{C}\mathtt{G}\mathtt{A},\mathtt{C}\mathtt{G}\varepsilon\mathtt{T}\mathtt{T},\mathtt{T}\mathtt{A}\mathtt{G}) + (\mathtt{G}\mathtt{G}\mathtt{T}\mathtt{A},\mathtt{C}\mathtt{C}\mathtt{T}\mathtt{A}\mathtt{T},\varepsilon\mathtt{C}\mathtt{A})\quad y=(\mathtt{G}\mathtt{G}\mathtt{T}\mathtt{A},\mathtt{C}\mathtt{G}\varepsilon\mathtt{T}\mathtt{T},\varepsilon\mathtt{C}\mathtt{A}) + (\mathtt{A}\mathtt{C}\mathtt{G}\mathtt{A},\mathtt{C}\mathtt{C}\mathtt{T}\mathtt{A}\mathtt{T},\mathtt{T}\mathtt{A}\mathtt{G})
\]
that was given in Example \ref{exa:Recombination_congruences_pi_family} to the same elements, which are displayed below.
\[
\begin{array}{ccccc}
\cellcolor[gray]{0.8}\textrm{via }FE_b^{\varepsilon}(\rho_{a_1})\cellcolor[gray]{0.8}&\cellcolor[gray]{0.8}\quad\quad\quad&\cellcolor[gray]{0.8}\textrm{via }FE_b^{\varepsilon}(\rho_{a_2})&\cellcolor[gray]{0.8}\quad\quad\quad&\cellcolor[gray]{0.8}\textrm{via }FE_b^{\varepsilon}(\rho_{a_3})\\
\hline
&&&&\vspace{-5pt}\\
\mathtt{A}\mathtt{C}\mathtt{G}\mathtt{A}+\mathtt{G}\mathtt{G}\mathtt{T}\mathtt{A}&&\mathtt{C}\mathtt{G}\varepsilon\mathtt{T}\mathtt{T}+\mathtt{C}\mathtt{C}\mathtt{T}\mathtt{A}\mathtt{T}&&\mathtt{T}\mathtt{A}\mathtt{G}+\varepsilon\mathtt{C}\mathtt{A}
\end{array}
\]
As will be shown in Proposition \ref{prop:single_cone_irreducible}, this means that the codomains of the arrows $\rho_{a_1}$, $\rho_{a_2}$ and $\rho_{a_3}$ may be good candidates for being irreducible objects with respect to a recombination prescheme of the form given in Example \ref{exa:Canonical_recombination_prescheme}.
\end{example}

\begin{proposition}\label{prop:single_cone_irreducible}
Let $(\Omega,D,X)$ be a recombination prescheme. Suppose that $D$ contains a unique cone of the form $\rho:\Delta_A(\tau) \Rightarrow \theta$. For every object $a \in A$, the object $\theta(a)$ in $\mathbf{Seg}(\Omega)$ is irreducible.
\end{proposition}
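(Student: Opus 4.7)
The plan is to reduce the irreducibility condition for $\theta(a)$ (Definition~\ref{def:irreducible_object}) to the coequalization statement of Remark~\ref{rem:Coequalizing_arrows}, by using Proposition~\ref{prop:quasi_homologous_preordered_category} to show that the morphism $f$ in the definition must in fact equal the leg $\rho_a$.

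First, I would unfold what is to be checked: for every morphism $f:\upsilon\to\theta(a)$ in $\mathbf{Seg}(\Omega)$ and every cone $\rho'$ of $D$ with apex $\upsilon$, the map $FX(f)$ must coequalize $GX(\rho')\rightrightarrows FX(\upsilon)$. Since by hypothesis $D$ contains the single cone $\rho:\Delta_A(\tau)\Rightarrow\theta$, whose apex is $\tau$, the condition is vacuous unless $\upsilon=\tau$ and $\rho'=\rho$. Hence the only situation to examine is that of an arbitrary morphism $f:\tau\to\theta(a)$.

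Next, I would exploit the fact that, by the definition of a chromology (section~\ref{sec:chromologies}), the cone $\rho$ lives in the subcategory $\mathbf{Seg}(\Omega\,|\,n)$ of quasi-homologous segments for some non-negative integer $n$. In particular, $\tau$ and $\theta(a)$ share the same domain $[n]$, and the leg $\rho_a:\tau\to\theta(a)$ is a morphism in $\mathbf{Seg}(\Omega\,|\,n)$. Proposition~\ref{prop:quasi_homologous_preordered_category} then applies: whenever there exists a morphism between two objects in $\mathbf{Seg}(\Omega\,|\,n)$, it is the unique morphism between them in the ambient category $\mathbf{Seg}(\Omega)$. This forces $f=\rho_a$.

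Finally, I would conclude by invoking Remark~\ref{rem:Coequalizing_arrows}, which states precisely that $FX(\rho_a):FX(\tau)\to FX(\theta(a))$ coequalizes the recombination congruence $GX(\rho)\rightrightarrows FX(\tau)$ for every $a\in A$. Combined with $f=\rho_a$, this yields the required coequalization for $FX(f)$. The proof is therefore a short structural argument rather than a computation; there is no real obstacle beyond spotting that the cones of a chromology are quasi-homologous, so that uniqueness of morphisms in $\mathbf{Seg}(\Omega\,|\,n)$ rigidifies the only case left to examine.
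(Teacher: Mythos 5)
Your proposal is correct and follows essentially the same route as the paper's own proof: reduce to the single apex $\tau$ using the uniqueness of the cone in $D$, use the fact that cones of a chromology live in a category of quasi-homologous segments together with Proposition \ref{prop:quasi_homologous_preordered_category} to force $f=\rho_a$, and conclude via Remark \ref{rem:Coequalizing_arrows}. No gaps.
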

\begin{proof}
Let us now show that if $\rho$ is the only cone of $D$, then the object $\theta(a)$ is irreducible for $(\Omega,D,X)$. By definition of a chromology (section \ref{sec:chromologies}), the arrow $\rho_a:\tau \to \theta(a)$ is an arrow in a category of quasi-homologous segments (see Definition \ref{sec:chromologies}). By Proposition \ref{prop:quasi_homologous_preordered_category}, this means that this arrow is the only arrow of type $\tau \to \theta(a)$ in $\mathbf{Seg}(\Omega)$. By Remark \ref{rem:Coequalizing_arrows}, this means that for every arrow $\tau \to \theta(a)$ in $\mathbf{Seg}(\Omega)$, its image $FX(\tau) \to FX(\theta(a))$ coequalizes the pair $GX(\rho) \rightrightarrows FX(\tau)$. Since $\rho$ is the only cone of $D$, this means that the image of every arrow $\upsilon \to \theta(a)$ in $\mathbf{Seg}(\Omega)$ via the functor $FX:\mathbf{Seg}(\Omega) \to \mathbf{Set}$ coequalizes the pair $GX(\rho') \rightrightarrows FX(\upsilon)$ for every cone $\rho':\Delta_{A'}(\upsilon) \Rightarrow \theta'$ in $D$.
\end{proof}

\begin{proposition}\label{prop:irreducible_coequalizer_map_isomorphism}
Let $(\Omega,D,X)$ be a recombination prescheme. For every irreducible object $\tau$ in $\mathbf{Seg}(\Omega)$, the coequalizer map $q_{X}:FX(\tau) \to DX(\tau)$ (Convention \ref{conv:coequalizer_map})  associated with the recombination semimodule over $X$ (Definition \ref{def:Canonical_pedigrads_in_semimodules}) is an isomorphism.
\end{proposition}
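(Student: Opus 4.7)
The plan is to apply the universal property of the coequalizer defining $DX(\tau)$ directly, using the irreducibility hypothesis to produce a one-sided inverse, and then promote it to a two-sided inverse via the fact that $q_X$ is a coequalizer map.

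First, I would unpack the coequalizer of diagram~(\ref{eq:coequalizer_mod_pedigrad}): the map $q_X\colon FX(\tau) \to DX(\tau)$ is, by construction, the coequalizer of the parallel pair
\[
\Big(\mathop{\bigoplus}\limits_{\upsilon,\rho,f}FX(f)\Big)\circ\mathsf{prj}_1,\ \Big(\mathop{\bigoplus}\limits_{\upsilon,\rho,f}FX(f)\Big)\circ\mathsf{prj}_2\colon \mathop{\bigoplus}\limits_{\upsilon,\rho,f:\upsilon\to\tau} GX(\rho) \rightrightarrows FX(\tau),
\]
where $(\upsilon,\rho,f)$ ranges over all triples with $\rho:\Delta_A(\upsilon)\Rightarrow\theta$ in $D$ and $f\colon \upsilon\to\tau$ in $\mathbf{Seg}(\Omega)$. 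Since the coproduct on the left distributes over the two projections, equality of the two composites is equivalent to the assertion that for every triple $(\upsilon,\rho,f)$, the restriction $FX(f)\circ\mathsf{prj}_1 = FX(f)\circ\mathsf{prj}_2$ as morphisms $GX(\rho) \to FX(\tau)$.

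Now I would invoke Definition~\ref{def:irreducible_object}: because $\tau$ is irreducible for $(\Omega,D,X)$, the map $FX(f)\colon FX(\upsilon) \to FX(\tau)$ coequalizes the pair $GX(\rho)\rightrightarrows FX(\upsilon)$ for every admissible $(\upsilon,\rho,f)$. By the previous paragraph, this is precisely the statement that the identity morphism $\mathrm{id}_{FX(\tau)}\colon FX(\tau) \to FX(\tau)$ coequalizes the parallel pair above. The universal property of the coequalizer $q_X$ therefore yields a unique morphism $\bar{q}\colon DX(\tau) \to FX(\tau)$ such that $\bar{q}\circ q_X = \mathrm{id}_{FX(\tau)}$.

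Finally, to see that $\bar{q}$ is a two-sided inverse, compute
\[
q_X \circ \bar{q} \circ q_X \;=\; q_X \circ \mathrm{id}_{FX(\tau)} \;=\; q_X \;=\; \mathrm{id}_{DX(\tau)}\circ q_X.
\]
Since $q_X$ is a coequalizer map and in particular an epimorphism, it may be cancelled on the right, giving $q_X \circ \bar{q} = \mathrm{id}_{DX(\tau)}$. Hence $q_X$ is an isomorphism with inverse $\bar{q}$. There is no real obstacle here: the statement is essentially a formal consequence of irreducibility, which has been set up precisely so that the identity already factors through the coequalizer; the only point to check carefully is the distribution of the coproduct over the two projections in the first step, which is automatic.
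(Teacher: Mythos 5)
Your proof is correct and is exactly the argument the paper intends: its one-line proof (``by Definition \ref{def:irreducible_object} and universality of a coequalizer'') is precisely your observation that irreducibility forces the two parallel composites in (\ref{eq:coequalizer_mod_pedigrad}) to agree on every summand of the coproduct, so the identity of $FX(\tau)$ already coequalizes them and the universal property plus right-cancellation of the epimorphism $q_X$ yields the inverse. No gaps; you have simply written out the details the paper leaves implicit.
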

\begin{proof}
By Definition \ref{def:irreducible_object} and universality of a coequalizer.
\end{proof}

\begin{definition}[Recombination scheme]
A \emph{recombination scheme} is a recombination prescheme $(\Omega,D,X)$ such that for every cone $\rho:\Delta_A(\tau) \Rightarrow \theta$ in $D$ and object $a \in A$, the object $\theta(a)$ is irreducible in $\mathbf{Seg}(\Omega)$.
\end{definition}

The following theorem shows that the recombination semimodule associated with a recombination scheme is a $\mathcal{W}^{\textrm{mon}}$-pedigrad.

\begin{theorem}\label{theo:representable_pedigrad_E_b_varepsilon}
Let $(\Omega,D,X)$ be a recombination scheme.
For every cone $\rho:\Delta_A(\tau) \Rightarrow \theta$ in $D$, the canonical arrow $\iota:DX(\tau) \to \prod_{a \in A}DX(\theta(a))$ is a monomorphism in $B_2\textrm{-}\mathbf{Mod}$.
\end{theorem}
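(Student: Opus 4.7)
The plan is to compare the canonical arrow $\iota$ with the product-adjoint $\pi_{X(\rho)}\colon FX(\tau) \to \prod_{a \in A} FX(\theta(a))$ of the recombination cone $X(\rho)$ (Convention \ref{conv:definition_pi_S}), and then to exploit the fact that $GX(\rho)$ is by construction the kernel pair of $\pi_{X(\rho)}$. Since every $\theta(a)$ is irreducible by the definition of a recombination scheme, Proposition \ref{prop:irreducible_coequalizer_map_isomorphism} guarantees that each coequalizer map $q_X^{\theta(a)}\colon FX(\theta(a)) \to DX(\theta(a))$ is an isomorphism, hence so is $\prod_{a \in A} q_X^{\theta(a)}$.

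Applying the naturality of $q_X : FX \Rightarrow DX$ to the arrows $\rho_a : \tau \to \theta(a)$ and pairing the results, I obtain the commutative square
\[
\xymatrix{
FX(\tau) \ar[r]^-{\pi_{X(\rho)}} \ar[d]_{q_X^{\tau}} & \prod_{a \in A} FX(\theta(a)) \ar[d]^{\cong} \\
DX(\tau) \ar[r]_-{\iota} & \prod_{a \in A} DX(\theta(a))
}
\]
in which the right vertical arrow is the isomorphism $\prod_{a \in A} q_X^{\theta(a)}$. From here, to prove $\iota$ is a monomorphism, I take $u, v \in DX(\tau)$ with $\iota(u) = \iota(v)$ and lift them to $x, y \in FX(\tau)$ via $q_X^{\tau}$. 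This lifting is legitimate because $q_X^{\tau}$ is surjective: coequalizers in $B_2\text{-}\mathbf{Mod}$ are quotient maps (cf.\ Example \ref{exa:characterization_epi_coequalizer_maps} and Proposition \ref{prop:characterization_epi}). The commutative square then forces $\pi_{X(\rho)}(x) = \pi_{X(\rho)}(y)$, which by Definition \ref{def:Recomb_congruences} means exactly that $(x, y) \in GX(\rho)$.

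To finish, I observe that $\rho$ is itself a cone of $D$ defined on $\tau$, so the triple $(\tau, \rho, \mathrm{id}_{\tau})$ appears in the indexing set of the coequalizer (\ref{eq:coequalizer_mod_pedigrad}) defining $DX(\tau)$. The two projections of $GX(\rho)$ into $FX(\tau)$ are therefore identified by $q_X^{\tau}$, which gives $u = q_X^{\tau}(x) = q_X^{\tau}(y) = v$. The main obstacle is really only notational bookkeeping: one has to confirm that the identity arrow on $\tau$ genuinely occurs among the indexing triples, so that the congruence $GX(\rho)$ is imposed on the nose rather than in some pulled-back form. Every other ingredient — the naturality diagram, the reduction via irreducibility, the surjectivity of coequalizer maps in $B_2\text{-}\mathbf{Mod}$, and the kernel-pair description of $GX(\rho)$ — follows directly from the definitions and the preceding propositions.
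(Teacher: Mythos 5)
Your proof is correct and follows essentially the same route as the paper's: both hinge on the irreducibility of the objects $\theta(a)$ making $\prod_{a}q_X$ an isomorphism (Proposition \ref{prop:irreducible_coequalizer_map_isomorphism}), on $GX(\rho)$ being the kernel pair of $\pi_{X(\rho)}$, and on the coequalizer $q_X$ at $\tau$ identifying the two projections of $GX(\rho)$ because the triple $(\tau,\rho,\mathrm{id}_{\tau})$ genuinely occurs among the indices of diagram (\ref{eq:coequalizer_mod_pedigrad}). The only difference is presentational: the paper runs the argument arrow-theoretically, building an epimorphism $\lambda:GX(\rho)\to P$ onto the kernel pair $P$ of $\iota$ via orthogonality to $B_2$ and then invoking Proposition \ref{prop:characterization_mono}, whereas you chase elements directly, using that coequalizer maps in $B_2\textrm{-}\mathbf{Mod}$ are surjective (Example \ref{exa:characterization_epi_coequalizer_maps}) and that injectivity implies monomorphy.
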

\begin{proof}
By Definition \ref{def:Recomb_congruences}, the pullback of two copies of the canonical arrow $\pi_{X(\rho)}$ is the recombination congurence $GX(\rho)\rightrightarrows FX(\tau)$. If we denote by $p_1,p_2:P \rightrightarrows DX(\tau)$ the pullback of two copies of the arrow $\iota$ (see statement), the naturality of the coequalizer map $q_X:FX \to DX$ gives us an arrow $\lambda:FX(\rho) \to P$ making the following diagram commute.
\begin{equation}\label{eq:proof_mono_pedigrad}
\xymatrix@-10pt{
&GX(\rho)\ar[dd]|\hole_<<<<<{\mathsf{prj}_2}\ar[ld]_{\mathsf{prj}_1}\ar@{-->}[rr]^{\lambda}&&P\ar[dd]_{p_2}\ar[ld]_{p_1}\\
FX(\tau)\ar[rr]^>>>>>>>>{q_X}\ar[dd]_{\pi_{X(\rho)}}&&DX(\tau)\ar[dd]^>>>>>>{\iota}&\\
&FX(\tau)\ar[ld]_{\pi_{X(\rho)}}\ar[rr]|\hole^<<<<<<<<<{q_X}&&DX(\tau)\ar[ld]^{\iota}\\
\prod_{a \in A}FX(\theta(a))\ar[rr]_{\cong}^{\prod_a q_{X}}&&\prod_{a \in A}DX(\theta(a))&
}
\end{equation}
Let us show that $\lambda$ is an epimorphism in $B_2\textrm{-}\mathbf{Mod}$ by showing it is orthogonal with respect to the $B_2$-semimodule $B_2$ (see Proposition \ref{prop:characterization_epi}). 
First, because $(\Omega,D,X)$ is a recombination scheme, Proposition \ref{prop:irreducible_coequalizer_map_isomorphism} implies that the bottom front arrow of diagram (\ref{eq:proof_mono_pedigrad}) is an isomorphism. 
Second, because $q_X:FX(\tau) \to DX(\tau)$ is a coequalizer map, it is  orthogonal with respect to the $B_2$-semimodule $B_2$ (Example \ref{exa:characterization_epi_coequalizer_maps}).
These two facts imply that, for every arrow $x:B_2 \to P$, the composite arrows $p_1 \circ x:B_2 \to DX(\tau)$ and  $p_2 \circ x:B_2 \to DX(\tau)$ admit lifts $h_1:B_2 \to FX(\tau)$ and $h_2:B_2 \to FX(\tau)$ for which the following diagram commutes.
\[
\xymatrix{
B_2\ar[r]^{h_1}\ar[d]_{h_2}&FX(\tau)\ar[d]^{\pi_{X(\rho)}}\\
FX(\tau)\ar[r]_{\pi_{X(\rho)}}&*+!L(.7){\prod_{a \in A}FX(\theta(a))}
}
\]
Applying the universality property of $GX(\rho)$ on the previous diagram provides an arrow $h:B_2 \to GX(\rho)$ for which the equation $\lambda \circ h = x$ holds. In other words, the arrow $\lambda$ is orthogonal to the $B_2$-semimodule $B_2$. Now, because the equation $q_X \circ \mathsf{prj}_1 = q_X \circ \mathsf{prj}_2$ holds by definition of $q_X$ and because $\lambda$ is an epimorphism (Proposition \ref{prop:characterization_epi}), the two arrows $p_1,p_2:P \rightrightarrows DX(\tau)$ must be equal (see diagram (\ref{eq:proof_mono_pedigrad})). Because this pair of arrows is also the pullback of two copies of $\iota$, the arrow $\iota$ is a monomorphism (Proposition \ref{prop:characterization_mono}).
\end{proof}

\subsection{Presentable functors}\label{ssec:presentable_functors}
Here, the terminology `presentable' refers to the idea of being the quotient of a free object of a given type. A presentable functor will usually encompass the type of information that can be observed from a \emph{set} of data with which one wants to analyze in $B_2\textrm{-}\mathbf{Mod}$. Throughout this section, we will let $(\Omega,\preceq)$ denote a pre-ordered set.

\begin{definition}[Presentable functors]
A functor $P:\mathbf{Seg}(\Omega) \to B_2\textrm{-}\mathbf{Mod}$ will be said to be \emph{presentable} over a functor $X:\mathbf{Seg}(\Omega) \to \mathbf{Set}$ if it is equipped with a morphism $q:FX \to P$ that is the coequalizer map of a pair of parallel arrows, as shown below, in $[\mathbf{Seg}(\Omega),B_2\textrm{-}\mathbf{Mod}]$.
\begin{equation}\label{eq:coequalizer_representable_pedigrads}
\xymatrix{
Q\ar@<-1ex>[r]_-{p_2}\ar@<+1ex>[r]^-{p_1}&FX
}
\end{equation}
\end{definition}

\begin{example}[Trivial example]
For every pointed set $(E,\varepsilon)$ and element $b \in \Omega$, the functor $FE_{b}^{\varepsilon}$ is obviously presentable for the trivial coequalizer diagram given below.
\[
\xymatrix{
FE_{b}^{\varepsilon}\ar@<-1ex>[r]_-{\mathrm{id}}\ar@<+1ex>[r]^-{\mathrm{id}}&FE_{b}^{\varepsilon}
}
\]
\end{example}

\begin{example}[Recombination]\label{exa:presentable_alphabet_recomb_semimodule}
For every pointed set $(E,\varepsilon)$, element $b \in \Omega$ and finite chromology $(\Omega,D)$, the recombination semimodule $DE_{b}^{\varepsilon}:\mathbf{Seg}(\Omega) \to B_2\textrm{-}\mathbf{Mod}$ (see Definition \ref{def:Canonical_pedigrads_in_semimodules}) is the coequalizer for the pair given in (\ref{eq:coequalizer_mod_pedigrad}). It is therefore presentable for the coequalizer map $q:FE_{b}^{\varepsilon} \to DE_{b}^{\varepsilon}$
\end{example}

\begin{definition}[Presentable morphisms]\label{def:presentable_morphisms}
Let $(P_1,q_1)$ and $(P_2,q_2)$ be two presentable functors over two functors $X_1$ and $X_2$, respectively. A \emph{presentable morphism} from $(P_1,q_1)$ to $(P_2,q_2)$ is a commutative square on the following form in $[\mathbf{Seg}(\Omega),B_2\textrm{-}\mathbf{Mod}]$.
\[
\xymatrix{
FX_1\ar[d]_{g}\ar[r]^-{q_1}&P_1\ar[d]^f\\
FX_2\ar[r]^-{q_2}&P_2\\
}
\]
\end{definition}

\begin{remark}\label{rem:b_presentable_category}
Any component-wise composition of presentable morphisms is presentable.
\end{remark}

\begin{example}[Mutations as presentable morphisms]\label{exa:Mutations_presentable_morphisms}
Let $(E,\varepsilon)$ be a pointed set, $b$ an element in $\Omega$ and $(\Omega,D)$ be a finite chromology. One of the most trivial\footnote{More exotic examples are possible.} types of presentable morphisms of the form $DE_{b}^{\varepsilon} \to DE_{b}^{\varepsilon}$ (for the structure given in Example \ref{exa:presentable_alphabet_recomb_semimodule}) can arise from morphisms $FE_b^{\varepsilon} \to FE_b^{\varepsilon}$ whose mappings are generated by functions $\nu:E \to F(E)$ that assign each element $x \in E$ to a (finite) sum of elements in $F(E)$. The morphism $FE_b^{\varepsilon} \to FE_b^{\varepsilon}$ then sends any word of the form $\mathtt{X}_1\dots \mathtt{X}_n$ in $E^{\times n}$ to the element of $F(E^{\times n})$ represented by the following finite subset of $E^{\times n}$ (as in Example \ref{exa:Recombination_congruences_pi_family_cross}). 
\[
\mathsf{Supp}(\nu(\mathtt{X}_1)) \times \dots \times \mathsf{Supp}(\nu(\mathtt{X}_n))
\]
The following table gives an example when $(E,\varepsilon)$ is our usual pointed set $\{\mathtt{A},\mathtt{C},\mathtt{G},\mathtt{T},\varepsilon\}$.
\smallskip

\begin{center}
\begin{tabular}{|c|c|ccc|}
\hline
\cellcolor[gray]{0.8}$\mathtt{X}$ & \cellcolor[gray]{0.8}$\nu(\mathtt{X})$ & \cellcolor[gray]{0.8}$\mathtt{X}_1\dots\mathtt{X}_n$& \cellcolor[gray]{0.8} $\mapsto$ & \cellcolor[gray]{0.8}$\mathsf{Supp}(\nu(\mathtt{X}_1))\times \dots \times \mathsf{Supp}(\nu(\mathtt{X}_n))$\\
\hline
$\mathtt{A}$ & $\varepsilon$ & \multirow{2}{*}{$\mathtt{A}\mathtt{C}$}& \multirow{2}{*}{$\mapsto$}& \multirow{2}{*}{$\varepsilon\mathtt{A} + \varepsilon\mathtt{C} + \varepsilon\mathtt{T}$}\\
$\mathtt{C}$ & $\mathtt{A} + \mathtt{C} + \mathtt{T}$ & &&\\
\cline{3-5}
$\mathtt{G}$ & $\mathtt{T} + \mathtt{C}$ &\multirow{3}{*}{$\varepsilon\mathtt{G}\mathtt{T}$} & \multirow{3}{*}{$\mapsto$} & \multirow{3}{*}{$\varepsilon\mathtt{T}\varepsilon + \varepsilon\mathtt{G}\varepsilon +\varepsilon\mathtt{T}\mathtt{T} + \varepsilon\mathtt{G}\mathtt{T}$}\\
$\mathtt{T}$ & $\varepsilon + \mathtt{T}$&&&\\
$\varepsilon$&$\varepsilon$&&&\\
\hline 
\end{tabular}
\vspace{5pt}
\end{center}
As one can imagine, this type of morphism can be used to represent DNA mutations. As noticed in Example \ref{exa:Mutations_in_set_are_spans}, taking $(E,\varepsilon)$ to be the pointed set $\{\mathtt{A},\mathtt{C},\mathtt{G},\mathtt{T},\varepsilon\}$ would result in mutations that are too systematic to be realistic. As mentioned thereof, a better parameterization of the alphabet could then be used to make these mutations more realistic. The main difference with Example \ref{exa:Mutations_in_set_are_spans} is that $B_2$-modules now allow us to incorporate some polymorphism in the mutation. The table given below associate examples of mappings (on the left) with certain mutation types (on the right) and shows how the sums can be used to generate polymorphic substitution mutations.
\smallskip

\begin{center}
\begin{tabular}{|c|c|c|}
\hline
\cellcolor[gray]{0.8}Examples of mappings&\cellcolor[gray]{0.8}$\Rightarrow$ &\cellcolor[gray]{0.8}Types of mutations\\
\hline
$\mathtt{A} \mapsto \varepsilon$; $\quad\mathtt{C} \mapsto \varepsilon$; $\quad\mathtt{G} \mapsto \varepsilon$; $\quad\mathtt{T} \mapsto \varepsilon$&$\Rightarrow$ & Deletion mutations\\
\hline
$\mathtt{A} \mapsto \mathtt{G}+\mathtt{T}$; $\quad\mathtt{A} \mapsto \mathtt{A}+\mathtt{T}$; $\quad\mathtt{A} \mapsto \mathtt{T}$; etc.&$\Rightarrow$ & Substitution mutations\\
\hline
\end{tabular}
\vspace{5pt}
\end{center}
As pointed out in Example \ref{exa:Mutations_in_set_are_spans}, the fact that the element $\varepsilon$ always needs to be mapped to itself (by definition of the concept of presentability over $E_b^{\varepsilon}$), implies that insertion mutations (if not all) should be studied through spans of presentable morphisms.
\end{example}

\begin{example}[Transcription as presentable morphisms]\label{exa:Transcription_translation_presentable_morphisms}
Another interpretation of presentable morphisms could be transcription processes (see Example \ref{exa:Transcription_in_set}).
\end{example}

\subsection{Presentable pedigrads and their universal properties}\label{ssec:presentable_pedigrads}
Let $(\Omega,\preceq)$ be a pre-ordered set equipped with a finite chromology structure $(\Omega,D)$.

The proposition given below says that if a presentable functor $\mathbf{Seg}(\Omega) \to B_2\textrm{-}\mathbf{Mod}$ is a $\mathcal{W}^{\textrm{mon}}$-pedigrad, then every pair of elements contained in any of its associated recombination congruences will be equated in the images of $P$. In Example \ref{exa:More_quotients}, we will show how, in general, equations can be used to describe biological phenomena.

\begin{proposition}\label{prop:representable_pedigrad_quotiented_by_recombinations}
 Let $P$ denote a $\mathcal{W}^{\textrm{mon}}$-pedigrad that is presentable over a functor $X:\mathbf{Seg}(\Omega) \to \mathbf{Set}$ and $\rho$ be a cone in $D$. The coequalizer map $FX \to P$ coequalizes the recombination congruence associated with the $\rho$-recombination cone of $X$ (Definition \ref{def:Relative_definition_families}).
\end{proposition}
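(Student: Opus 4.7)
The plan is to chase the recombination congruence $GX(\rho) \rightrightarrows FX(\tau)$ around a naturality square, and then use the monomorphism hypothesis on $P$ to cancel a map and conclude.

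First I would unpack the definitions at the object $\tau$. By Definition \ref{def:Recomb_congruences} (and Convention \ref{conv:definition_pi_S}), the two projections $\mathsf{prj}_1,\mathsf{prj}_2:GX(\rho)\rightrightarrows FX(\tau)$ are defined as the pullback of two copies of the canonical arrow $\pi_{X(\rho)}:FX(\tau) \to F(X(\rho))^{\times} = \prod_{a\in A}FX(\theta(a))$. In particular, the equation $\pi_{X(\rho)}\circ \mathsf{prj}_1 = \pi_{X(\rho)}\circ \mathsf{prj}_2$ holds in $B_2\text{-}\mathbf{Mod}$.

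Next I would invoke naturality of the coequalizer map $q:FX\Rightarrow P$. Because the components $\pi_{X(\rho)}$ and $\iota_P:P(\tau)\to \prod_{a\in A}P(\theta(a))$ are both obtained as the product adjoints of the wide span of maps indexed by the arrows $\rho_a:\tau\to\theta(a)$, the following square commutes:
\[
\xymatrix{
FX(\tau)\ar[r]^-{q_\tau}\ar[d]_{\pi_{X(\rho)}}&P(\tau)\ar[d]^{\iota_P}\\
\prod_{a\in A}FX(\theta(a))\ar[r]_-{\prod_a q_{\theta(a)}}&\prod_{a\in A}P(\theta(a))
}
\]
Precomposing this square with $\mathsf{prj}_1$ and with $\mathsf{prj}_2$ and using the equation of the previous paragraph yields the identity $\iota_P\circ q_\tau\circ \mathsf{prj}_1 = \iota_P\circ q_\tau\circ \mathsf{prj}_2$.

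Finally I would apply the hypothesis that $P$ is a $\mathcal{W}^{\text{mon}}$-pedigrad: the canonical arrow $\iota_P$ is a monomorphism in $B_2\text{-}\mathbf{Mod}$, so it can be cancelled on the left, giving $q_\tau\circ \mathsf{prj}_1 = q_\tau\circ \mathsf{prj}_2$. This is exactly the statement that $q_\tau$ coequalizes the recombination congruence at $\tau$. There does not seem to be any genuine obstacle here; the only mild subtlety is to make sure that the vertical arrows in the naturality square really are the ones appearing in the pullback defining $GX(\rho)$, which is immediate once one observes that both are built from the image of the cone $\rho$ under the relevant functors.
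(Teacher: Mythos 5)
Your proof is correct and follows essentially the same route as the paper's: both use the naturality square relating $\pi_{X(\rho)}$ to $\iota_P$ via the coequalizer map, observe that $\pi_{X(\rho)}$ coequalizes the pullback projections by definition of $GX(\rho)$, and then cancel the monomorphism $\iota_P$ supplied by the $\mathcal{W}^{\textrm{mon}}$-pedigrad hypothesis. The only cosmetic difference is that you make the left-cancellation step fully explicit by writing out the two composites, whereas the paper phrases it as "$q_{\tau}$ coequalizes the pair itself."
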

\begin{proof}
Let $\rho$ be a cone of the form $\Delta_A(\tau) \Rightarrow \theta$. The naturality of coequalizer map $q:FX \to P$ gives us the commutative square displayed below, on the right, while the recombination congruence associated with the $\rho$-recombination cone of $X$ is given next to it, on the left.
\[
\xymatrix{
GX(\rho) \ar@<+1.2ex>[r]^{\mathsf{prj}_1}\ar@<-1.2ex>[r]_{\mathsf{prj}_2}&FX(\tau)\ar[d]_{\pi_{X(\rho)}}\ar[r]^{q_{\tau}}&P(\tau)\ar[d]^{\iota}\\
&*+!R(.7){\prod_{a \in A}FX(\theta(a))} \ar[r]_{\prod_a q_{\theta(a)}}&*+!L(.7){\prod_{a \in A}P(\theta(a))}
}
\]
Since, by definition, the arrow $\pi_{X(\rho)}$ coequalizes the pair $(\mathsf{prj}_1,\mathsf{prj}_2)$, so does the composite $\iota \circ q_{\tau}$. Since $P$ is a $\mathcal{W}^{\textrm{mon}}$-pedigrad, the arrow $\iota$ is a monomorphism, which implies that $q_{\tau}$ coequalizes the $(\mathsf{prj}_1,\mathsf{prj}_2)$ itself (see previous diagram).
\end{proof}

\begin{example}[More equations]\label{exa:More_quotients}
A pedigrad may satisfy various types of equation. For illustration, if we let $\Omega$ be the pre-ordered set $\{0 \leq 1\}$, then any pedigrad $P:\mathbf{Seg}(\Omega) \to B_2\textrm{-}\mathbf{Mod}$ is equipped with a morphism of the following form, where the codomain may contain more equations than the domain (while the equations satisfied in its domain must be sent to similar equations in its codomain).
\begin{equation}\label{eq:quotients_pedigrad}
P\left(
\xymatrix@C-30pt{(\bullet&\bullet&\bullet)&(\bullet&\bullet&\bullet)&(\bullet&\bullet&\bullet)}
\right)
\longrightarrow
P\left(
\xymatrix@C-30pt{(\bullet&\bullet&\bullet&\bullet&\bullet&\bullet&\bullet&\bullet&\bullet)}
\right)
\end{equation}
Interestingly, if we let $P$ be presentable over a functor $E_b^{\varepsilon}$ where $(E,\varepsilon)$ is taken to be the $\{\mathtt{A},\mathtt{C},\mathtt{G},\mathtt{T},\varepsilon\}$, we can give a biological interpretation to the following equations.

1) \textbf{Nullomers:} take the  word $\mathtt{G}\mathtt{A}\mathtt{C}\mathtt{C}\mathtt{G}\mathtt{T}\mathtt{A}\mathtt{A}\mathtt{G}$, which has representatives in each of the objects displayed in (\ref{eq:quotients_pedigrad}).
Even though these representatives look the same, they may be read differently in the domain and the codomain of (\ref{eq:quotients_pedigrad}). For instance, the word $\mathtt{G}\mathtt{A}\mathtt{C}\mathtt{C}\mathtt{G}\mathtt{T}\mathtt{A}\mathtt{A}\mathtt{G}$ could be torsion-free in the domain of (\ref{eq:quotients_pedigrad}) but could be subject to the following equation in the codomain.
\[
(\mathtt{G}\mathtt{A}\mathtt{C}\mathtt{C}\mathtt{G}\mathtt{T}\mathtt{A}\mathtt{A}\mathtt{G}) = 0
\]
Such an equation could, for example, mean that the segment $\mathtt{G}\mathtt{A}\mathtt{C}\mathtt{C}\mathtt{G}\mathtt{T}\mathtt{A}\mathtt{A}\mathtt{G}$ is what one calls a \emph{nullomer} \cite{Nullomer}, namely a short DNA sequence that cannot appear in the genome of a species. On the other hand, the element $\mathtt{G}\mathtt{A}\mathtt{C}\mathtt{C}\mathtt{G}\mathtt{T}\mathtt{A}\mathtt{A}\mathtt{G}$ would not be equal to 0 in the domain of (\ref{eq:quotients_pedigrad}) because the codons that compose it would not be specific to nullomers.

2) \textbf{Translation:} take the words $\mathtt{A}\mathtt{G}\mathtt{T}\mathtt{C}\mathtt{A}\mathtt{T}\mathtt{G}\mathtt{G}\mathtt{G}$ and $\mathtt{A}\mathtt{G}\mathtt{C}\mathtt{C}\mathtt{A}\mathtt{C}\mathtt{G}\mathtt{A}\mathtt{T}$, which can be viewed as elements living in the objects of (\ref{eq:quotients_pedigrad}).
These two elements could be distinct in the domain of (\ref{eq:quotients_pedigrad}), but could be sent to the same element in its codomain.
\[
(\mathtt{A}\mathtt{G}\mathtt{T}\mathtt{C}\mathtt{A}\mathtt{T}\mathtt{G}\mathtt{G}\mathtt{G})=(\mathtt{A}\mathtt{G}\mathtt{C}\mathtt{C}\mathtt{A}\mathtt{C}\mathtt{G}\mathtt{A}\mathtt{T})
\]
Such an equation could, for example, mean that the codon translations of these two words, namely $\mathtt{Ser}\textrm{-}\mathtt{His}\textrm{-}\mathtt{Gly}$ and $\mathtt{Ser}\textrm{-}\mathtt{His}\textrm{-}\mathtt{Asp}$ -- which are, noticeably, different -- give two tripeptides whose properties are seemingly the same \cite{Li_Dipeptide}.
\end{example}

The proposition given below is more of a categorical result showing that presentable pedigrads are endowed with a universal property. This property says that any logical reasoning done in a recombination semimodule $DX$ can be transferred to any pedigrad that is presentable over the underlying functor $X$ (see Remark \ref{rem:Universal_property_presentable_pedigrads}).

\begin{theorem}\label{theo:universal_property}
Let $(P,p)$ be a $\mathcal{W}^{\textrm{mon}}$-pedigrad for $(\Omega,D)$ that is presentable over a functor $X:\mathbf{Seg}(\Omega) \to \mathbf{Set}$. For every morphism $f:P \to Q$ in $[\mathbf{Seg}(\Omega), B_2\textrm{-}\mathbf{Mod}]$, there exists a unique morphism $f':DX \to Q$ making the following diagram commute. 
\[
\xymatrix{
FX\ar[d]_{q_X} \ar[r]^{f \circ p}& Q\\
DX\ar[ru]_{f'}&
}
\]
\end{theorem}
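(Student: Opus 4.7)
The plan is to use the universal property of the coequalizer defining $DX$ componentwise. Recall from Definition \ref{def:Canonical_pedigrads_in_semimodules} and Convention \ref{conv:coequalizer_map} that, for each segment $\tau$, the semimodule $DX(\tau)$ together with the map $q_X:FX(\tau)\to DX(\tau)$ is the coequalizer of the two arrows appearing in diagram (\ref{eq:coequalizer_mod_pedigrad}). So to produce the desired factorization $f'_\tau:DX(\tau)\to Q(\tau)$, it suffices to show that the composite natural transformation $f\circ p:FX\Rightarrow Q$ evaluated at $\tau$ coequalizes those two arrows.

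First I would invoke Proposition \ref{prop:representable_pedigrad_quotiented_by_recombinations}: since $P$ is a $\mathcal{W}^{\textrm{mon}}$-pedigrad presentable over $X$ via $p:FX\Rightarrow P$, the component $p_\upsilon$ coequalizes the recombination congruence $GX(\rho)\rightrightarrows FX(\upsilon)$ for every cone $\rho:\Delta_A(\upsilon)\Rightarrow\theta$ in $D$. Post-composing with $f_\upsilon$ shows that $(f\circ p)_\upsilon$ coequalizes the same pair. The pair defining $DX(\tau)$ in (\ref{eq:coequalizer_mod_pedigrad}) is obtained by composing these recombination congruences with morphisms $FX(g):FX(\upsilon)\to FX(\tau)$ ranging over all $g:\upsilon\to\tau$ in $\mathbf{Seg}(\Omega)$. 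Using naturality of $f\circ p$, for each such $g$ and each $(x,y)\in GX(\rho)$ we have
\[
(f\circ p)_\tau\circ FX(g)\circ \mathsf{prj}_1 = Q(g)\circ (f\circ p)_\upsilon\circ \mathsf{prj}_1 = Q(g)\circ (f\circ p)_\upsilon\circ \mathsf{prj}_2 = (f\circ p)_\tau\circ FX(g)\circ \mathsf{prj}_2,
\]
so $(f\circ p)_\tau$ coequalizes the parallel pair in (\ref{eq:coequalizer_mod_pedigrad}) indexed by $(\upsilon,\rho,g)$. Taking the universal coproduct of these equalities over all triples $(\upsilon,\rho,g)$, we conclude that $(f\circ p)_\tau$ coequalizes the entire pair defining $DX(\tau)$, so by universality there is a unique morphism $f'_\tau:DX(\tau)\to Q(\tau)$ in $B_2\textrm{-}\mathbf{Mod}$ with $f'_\tau\circ q_{X,\tau}=(f\circ p)_\tau$.

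Next I would verify naturality of $f'$ in $\tau$. For any arrow $h:\tau\to\tau'$ in $\mathbf{Seg}(\Omega)$, both $f'_{\tau'}\circ DX(h)$ and $Q(h)\circ f'_\tau$ pre-compose with $q_{X,\tau}$ to give $(f\circ p)_{\tau'}\circ FX(h)=Q(h)\circ (f\circ p)_\tau$, using naturality of $q_X$, $p$, and $f$. Since $q_{X,\tau}$ is epimorphic as a coequalizer in $B_2\textrm{-}\mathbf{Mod}$, these two morphisms agree, so $f':DX\Rightarrow Q$ is a natural transformation. Uniqueness of $f'$ as a whole is immediate from the same epi property applied pointwise: any two candidates become equal after pre-composing with $q_X$, hence are equal.

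I do not expect any serious obstacle; the main subtlety is keeping track of the indexing in the coproduct of (\ref{eq:coequalizer_mod_pedigrad}) and using naturality of $f\circ p$ to transport coequalization from the ``source'' $\upsilon$ to the ``target'' $\tau$. The key non-trivial input is Proposition \ref{prop:representable_pedigrad_quotiented_by_recombinations}, which converts the $\mathcal{W}^{\textrm{mon}}$-pedigrad hypothesis on $P$ into the coequalization statement for $p$; everything else is formal.
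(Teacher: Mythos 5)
Your proposal is correct and takes essentially the same route as the paper: both reduce the claim to showing that $f\circ p$ coequalizes the parallel pair of (\ref{eq:coequalizer_mod_pedigrad}) by combining Proposition \ref{prop:representable_pedigrad_quotiented_by_recombinations} with naturality to transport the coequalization along each $g:\upsilon\to\tau$, and then invoke the universal property of the coequalizer defining $DX$. Your write-up is in fact slightly more complete than the paper's, since you explicitly check naturality of $f'$ and derive uniqueness from the pointwise epimorphy of $q_X$, steps the paper leaves implicit.
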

\begin{proof}
According to Proposition \ref{prop:representable_pedigrad_quotiented_by_recombinations}, the coequalizer map $FX \to P$ makes the following diagram commute for every cone $\rho:\Delta_A(\upsilon)\Rightarrow \theta$ in $D$.
\[
\xymatrix{
GX(\rho)\ar@<+1.2ex>[r]\ar@<-1.2ex>[r]&FX(\upsilon)\ar[r]^{p_{\upsilon}}&P(\upsilon)\\
}
\]
Since, for every morphism $g:\upsilon \to \tau$ in $\mathbf{Seg}(\Omega)$, the diagram given below, on the left, commutes, the corresponding diagram on the right must also commute.
\[
\begin{array}{l}
\xymatrix{
FX(\upsilon)\ar[d]_{FX(g)}\ar[r]^{p_{\upsilon}}&P(\upsilon)\ar[d]^{P(g)}\\
FX(\tau)\ar[r]_{p_{\tau}}&P(\tau)
}
\end{array}
\quad
\Rightarrow
\quad
\begin{array}{l}
\xymatrix{
GX(\rho)\ar@<+1.2ex>[r]\ar@<-1.2ex>[r]&FX(\upsilon)\ar[r]^{FX(g)}&FX(\tau)\ar[r]^{p_{\tau}}&P(\tau)\\
}
\end{array}
\]
It then follows that the coequalizer map $p:FX \to P$ coequalizes the pair of arrows of (\ref{eq:coequalizer_mod_pedigrad}). By universal property, there must exists a unique morphism $f':DX \to P$ in $[\mathbf{Seg}(\Omega), B_2\textrm{-}\mathbf{Mod}]$ making the diagram of the statement commute.
\end{proof}

\begin{remark}[Presentable morphisms]\label{rem:Universal_property_presentable_pedigrads}
In Theorem \ref{theo:universal_property}, if one takes $f$ to be the identity on $P$, then the statement implies that there exists a unique presentable morphism $(DX,q_X) \to (P,p)$ of the following form.
\[
\xymatrix{
FX \ar[d]_{q_X} \ar@{=}[r]& FX\ar[d]^p\\
DX\ar[r]_{p'}&P
}
\]
Then, if one takes $f$ to be a presentable morphism $(P,p) \to (Q,q)$, then Remark \ref{rem:b_presentable_category} implies that the composite $f \circ p'$, which must be equal to $f'$ by Theorem \ref{theo:universal_property}, induces a presentable morphism $(DX,q_X) \to (Q,q)$.
\end{remark}

\section{Mapping functions and Pedigrads}\label{Genetic_linkage_and_mapping_functions}

The goal of this section is to show how one can retrieve the mapping functions used in gene linkage \cite{Speed_GMF,McPeek,Haldane} from pedigrads taking their values in the category of $B_2$-semimodules. The idea is that the pedigrads are supposed to help us reconstruct the spaces of events that one wants to measure from a logical reasoning in the chromology. 

First, recall that mapping functions plot the probability of observing a certain number of cross-overs between pairs of markers on distinct chromosomes as a function of their distances on their respective chromosomes. They usually take the form of (non-normalized) cumulative distribution functions e.g. $y=(1-e^{-2x})/2$ (see \cite{Haldane,ZhaoSpeed}). These functions are important because they give a notion of distance between genes by taking into account the frequency of recombination that separate them \cite{Speed_GMF}. This genetic distance can then be used to study genetic diseases \cite{Copeland,Ott}.

\subsection{Probability theory and \texorpdfstring{$B_2$}{Lg}-semimodules}
The goal of this section is to reformulate concepts pertaining to Probability Theory \cite{Loeve} in the language of $B_2$-semimodules in order to reformulate the results of \cite{Haldane} in the context of pedigrads. The notions that we shall use may be slightly weaker or stronger than those used in the literature (see \cite[Page 8]{Loeve}).

\begin{definition}[Event spaces]\label{def:Event_space}
For every set $S$, we will speak of an \emph{event space} over $S$ to refer to a subset $\mathcal{F} \subseteq \mathcal{P}(S)$ of finite subsets of $S$ that contains the empty set and that is stable under binary unions.
\end{definition}

\begin{remark}[Event spaces as \texorpdfstring{$B_2$}{Lg}-semimodules]
Definition \ref{def:Event_space} is equivalent to requiring $\mathcal{F}$ to be a sub-object of the free $B_2$-semimodule $F(S)$ in $B_2\textrm{-}\mathbf{Mod}$ (see Example \ref{exa:power_set_of_finite_sets} and Remark \ref{rem:description_free_b_2_semimodule}), which is to say a $B_2$-semimodule $\mathcal{F}$ that is equipped with a monomorphism $\mathcal{F} \hookrightarrow F(S)$ in $B_2\textrm{-}\mathbf{Mod}$.
\end{remark}

\begin{example}[Haldane's event space]\label{exa:Haldane_event_space}
In \cite{Haldane}, Haldane considers an event space in which he can count the number of cross-overs between two DNA segments during homologous recombination. In this example, we show how to recover this event space in the context of pedigrads. To do so, let $(E,\varepsilon)$ denote our usual pointed set $\{\mathtt{A},\mathtt{C},\mathtt{G},\mathtt{T},\varepsilon\}$ and let $(\Omega,\preceq)$ be the Boolean pre-ordered set $\{0\leq 1\}$ equipped with a finite chromology structure $(\Omega,D)$. The cones of $D$ will be specified later on. We will also let $b$ to be equal to $1 \in \Omega$. 

To count the number of cross-overs occurring between DNA segments, as Haldane did, we need to give ourselves a finite set of DNA segments of the same lengths that can be recombined. In terms of pedigrads, this would amount to picking an object $\tau$ in $\mathbf{Seg}(\Omega)$ and considering a monomorphism in $B_2\textrm{-}\mathbf{Mod}$ of the following type.
\[
B_2 \hookrightarrow DE_b^{\varepsilon}(\tau)
\]
The fact that this arrow is a monomorphism means that the sum of elements picked out by the element $1\in B_2$, say $\sum_{k=1}^n s_k$, is not equal to the zero element of $DE_b^{\varepsilon}(\tau)$. In other words, the recombination of the set of DNA segments $\{s_k~|~k \in [n]\}$ is not degenerate.

Now, the event space considered by Haldane can be seen as the pullback of the arrow $B_2 \hookrightarrow DE_b^{\varepsilon}(\tau)$ along the coequalizer map $FE_{b}^{\varepsilon}(\tau)\to DE_{b}^{\varepsilon}(\tau)$.
\[
\xymatrix{
\mathcal{F}\ar[r]\ar[d]_{\subseteq}\ar@{}[rd]|<<<{\rotatebox[origin=c]{90}{\huge{\textrm{$\llcorner$}}}}&B_2\ar[d]^{\subseteq}\\
FE_{b}^{\varepsilon}(\tau)\ar[r]&DE_{b}^{\varepsilon}(\tau)
}
\]
The resulting sub-object $\mathcal{F} \hookrightarrow FE_{b}^{\varepsilon}(\tau)$ corresponds to the equivalence class of the element $\sum_{k=1}^n s_k$ in $DE_{b}^{\varepsilon}(\tau)$. Let us give an example. Suppose that $\tau$ is the segment appearing at the top of the following (obvious) exactly 1-distributive cone, call it $\rho$, and suppose that $D$ only contains $\rho$.
\[
\xymatrix@C-30pt@R-15pt{
&&&&&&(\bullet)&(\bullet)&(\bullet)&(\bullet)\ar[llld]\ar[rrrd]&(\bullet)&(\bullet)&(\bullet)&&&&&&\\
(\bullet)&(\circ)&(\circ)&(\circ)&(\circ)&(\circ)&(\circ)&&&\dots&&&(\circ)&(\circ)&(\circ)&(\circ)&(\circ)&(\circ)&(\bullet)
}
\]
If the monomorphism $B_2 \hookrightarrow DE_b^{\varepsilon}(\tau)$ picks out the sum $\mathtt{A}\mathtt{G}\mathtt{T}\mathtt{C}\mathtt{C}\mathtt{T}\mathtt{A}+\mathtt{T}\mathtt{C}\mathtt{C}\mathtt{G}\mathtt{A}\mathtt{A}\mathtt{C}$, then the sub-semimodule $\mathcal{F} \hookrightarrow FE_{b}^{\varepsilon}(\tau)$ corresponds to the equivalence class of the recombination congruence associated with the $\rho$-recombination cone of $E_b^{\varepsilon}$ (Definition \ref{def:Relative_definition_families}), that is to say all the recombination events between the two words $\mathtt{A}\mathtt{G}\mathtt{T}\mathtt{C}\mathtt{C}\mathtt{T}\mathtt{A}$ and $\mathtt{T}\mathtt{C}\mathtt{C}\mathtt{G}\mathtt{A}\mathtt{A}\mathtt{C}$ (see examples below).
\[
\begin{array}{c}
\underbrace{\mathtt{A}\mathtt{G}\mathtt{T}\mathtt{C}\mathtt{C}\mathtt{T}\mathtt{A}}_{\textrm{0 cross-over}}+\underbrace{\mathtt{T}\mathtt{C}\mathtt{C}\mathtt{G}\mathtt{A}\mathtt{A}\mathtt{C}}_{\textrm{0 cross-over}}\\
\underbrace{\mathtt{A}\mathtt{G}\mathtt{T}\mathtt{C}\mathtt{C}\mathtt{T}\mathtt{A}}_{\textrm{0 cross-over}}+\underbrace{\mathtt{T}\mathtt{C}\mathtt{T}\mathtt{C}\mathtt{C}\mathtt{T}\mathtt{A}}_{\textrm{1 cross-over}}+\underbrace{\mathtt{T}\mathtt{C}\mathtt{C}\mathtt{G}\mathtt{A}\mathtt{A}\mathtt{C}}_{\textrm{0 cross-over}}\\
\underbrace{\mathtt{T}\mathtt{C}\mathtt{T}\mathtt{C}\mathtt{C}\mathtt{A}\mathtt{C}}_{\textrm{2 cross-overs}}+\underbrace{\mathtt{A}\mathtt{G}\mathtt{C}\mathtt{G}\mathtt{A}\mathtt{T}\mathtt{A}}_{\textrm{2 cross-overs}}+\underbrace{\mathtt{T}\mathtt{C}\mathtt{C}\mathtt{G}\mathtt{A}\mathtt{A}\mathtt{C}}_{\textrm{0 cross-over}}\\
\textrm{etc.}
\end{array}
\]
Since $\mathcal{F}$ is a finite subset of $FE_{b}^{\varepsilon}(\tau)$, the sum of all its elements corresponds to the representative of the equivalence class of $\mathtt{A}\mathtt{G}\mathtt{T}\mathtt{C}\mathtt{C}\mathtt{T}\mathtt{A}+\mathtt{T}\mathtt{C}\mathtt{C}\mathtt{G}\mathtt{A}\mathtt{A}\mathtt{C}$ (see Example \ref{exa:Haplogroups_x_x_x+y}). This representative event is usually called the \emph{sure event} \cite[Page 8]{Loeve}. On the other hand, the intersection of the supports of two events in $\mathcal{F}$ may not exist in $\mathcal{F}$ (see below).
\[
\Big\{
\underbrace{\mathtt{A}\mathtt{G}\mathtt{T}\mathtt{C}\mathtt{C}\mathtt{T}\mathtt{A}}_{\textrm{0 cross-over}},\underbrace{\mathtt{T}\mathtt{C}\mathtt{C}\mathtt{G}\mathtt{A}\mathtt{A}\mathtt{C}}_{\textrm{0 cross-over}}\Big\} \cap \Big\{
\underbrace{\mathtt{T}\mathtt{C}\mathtt{T}\mathtt{C}\mathtt{C}\mathtt{T}\mathtt{A}}_{\textrm{1 cross-over}},\underbrace{\mathtt{A}\mathtt{G}\mathtt{C}\mathtt{G}\mathtt{A}\mathtt{A}\mathtt{C}}_{\textrm{1 cross-over}},\underbrace{\mathtt{A}\mathtt{G}\mathtt{C}\mathtt{G}\mathtt{A}\mathtt{A}\mathtt{C}}_{\textrm{0 cross-over}}
\Big\} = \Big\{\underbrace{\mathtt{T}\mathtt{C}\mathtt{C}\mathtt{G}\mathtt{A}\mathtt{A}\mathtt{C}}_{\textrm{0 cross-over}}\Big\}  \notin \mathcal{F}
\]
The fact that intersections of events do not necessarily belong to $\mathcal{F}$ could translate the idea that natural selection may be able to shape the observable results of homologous recombination (e.g. sperm selection, fetal death, selection of hatchlings, etc.). 

However, note that in the case where the arrow $B_2 \hookrightarrow DE_b^{\varepsilon}(\tau)$ picks out a sum of two elements, such as $\mathtt{A}\mathtt{G}\mathtt{T}\mathtt{C}\mathtt{C}\mathtt{T}\mathtt{A}+\mathtt{T}\mathtt{C}\mathtt{C}\mathtt{G}\mathtt{A}\mathtt{A}\mathtt{C}$, it is always possible to uniquely complete an intersection into an element of $\mathcal{F}$ by considering the complementary recombination operations -- this element is the smallest element of $\mathcal{F}$ whose support contains that intersection. For instance, the elements given below, on the left, can be seen as the corresponding sums, on the right.
\[
\begin{array}{lll}
\mathtt{T}\mathtt{C}\mathtt{C}\mathtt{G}\mathtt{A}\mathtt{A}\mathtt{C} \notin \mathcal{F}\quad\quad\mapsto\quad\quad \mathtt{A}\mathtt{G}\mathtt{T}\mathtt{C}\mathtt{C}\mathtt{T}\mathtt{A}+\mathtt{T}\mathtt{C}\mathtt{C}\mathtt{G}\mathtt{A}\mathtt{A}\mathtt{C} \in \mathcal{F}\\
\mathtt{T}\mathtt{C}\mathtt{C}\mathtt{C}\mathtt{C}\mathtt{T}\mathtt{A} \notin \mathcal{F}\quad\quad\mapsto\quad\quad \mathtt{T}\mathtt{C}\mathtt{C}\mathtt{C}\mathtt{C}\mathtt{T}\mathtt{A}+\mathtt{A}\mathtt{G}\mathtt{T}\mathtt{G}\mathtt{A}\mathtt{A}\mathtt{C} \in \mathcal{F}\\
\end{array}
\]
In the case where the arrow $B_2 \hookrightarrow DE_b^{\varepsilon}(\tau)$ picks out a sum of more than two elements, the completion is not necessarily unique.
\end{example}

\begin{definition}[Bounded]
We will say that an event space $\mathcal{F}$ over a set $S$ is \emph{bounded} if it admits a maximum element for the inclusion of subsets of $S$. Such a maximum will be denoted as $\cup\mathcal{F}$ and called the \emph{sure event}.
\end{definition}

\begin{proposition}
An event space $\mathcal{F}$ over a set $S$ is bounded, if and only if it is a finite subsets of $\mathcal{P}(S)$.
\end{proposition}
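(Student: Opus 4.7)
The plan is to prove the two implications separately, relying only on the two closure properties in Definition \ref{def:Event_space} (containing $\emptyset$ and stability under binary unions) together with the requirement that every element of $\mathcal{F}$ be a finite subset of $S$.

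For the ``if'' direction, I would assume that $\mathcal{F}$ is a finite subset of $\mathcal{P}(S)$ and show that $\mathcal{F}$ admits a maximum. By induction on $|\mathcal{F}|$, using stability under binary unions as the induction step and the presence of $\emptyset \in \mathcal{F}$ as the base case, the set $U := \bigcup_{A \in \mathcal{F}} A$ belongs to $\mathcal{F}$. Every element of $\mathcal{F}$ is contained in $U$ by construction, so $U$ is a maximum of $\mathcal{F}$ for the inclusion order on $\mathcal{P}(S)$, and $\mathcal{F}$ is bounded with $\cup \mathcal{F} = U$.

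For the ``only if'' direction, I would assume that $\mathcal{F}$ is bounded and deduce that it has only finitely many elements. Let $M := \cup \mathcal{F}$ be the maximum element of $\mathcal{F}$. By Definition \ref{def:Event_space}, $M$ is a finite subset of $S$. Since $M$ is a maximum for the inclusion order, every $A \in \mathcal{F}$ satisfies $A \subseteq M$, hence $\mathcal{F} \subseteq \mathcal{P}(M)$. Because $M$ is finite, $\mathcal{P}(M)$ is finite (of cardinality $2^{|M|}$), and therefore so is $\mathcal{F}$.

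There is no real obstacle here: the statement is essentially a direct unpacking of the definition, and the only nontrivial ingredient is the routine induction showing that a finite union of elements of $\mathcal{F}$ remains in $\mathcal{F}$.
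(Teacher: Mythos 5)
Your proof is correct and follows essentially the same route as the paper: the bounded direction uses that $\cup\mathcal{F}$ is a finite set and that $\mathcal{F}\subseteq\mathcal{P}(\cup\mathcal{F})$, and the finite direction produces the maximum as the union of all elements. Your explicit induction via binary unions just spells out a step the paper leaves implicit (namely that the union of all elements of a finite $\mathcal{F}$ actually lies in $\mathcal{F}$), which is a welcome clarification but not a different argument.
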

\begin{proof}
Let $\mathcal{F}$ be a bounded event space over $S$. Because the element $\cup \mathcal{F}$ belongs to $\mathcal{F}$, it is a finite set. Since $\cup \mathcal{F}$ is also the union of all elements in $\mathcal{F}$, the inclusion $\mathcal{F} \subseteq \mathcal{P}(\cup \mathcal{F})$ must holds, so that $\mathcal{F}$ is finite. Conversely, if $\mathcal{F}$ is finite, then the union of all its element is a maximum element.
\end{proof}

\begin{remark}[Maximum element]
From the point of view of $B_2$-semimodules, the maximum of a bounded event space is the finite sum of all its elements.
\end{remark}

\begin{definition}[Probability measure]
Let $\mathcal{F}$ be a bounded event space over a set $S$. We will speak of a \emph{probability measure} on $\mathcal{F}$ to refer to a function $\wp:\mathcal{F} \to [0,1]$ such that 
\begin{itemize}
\item[1)] both identities $\wp(\emptyset)=0$ and $\wp(\cup \mathcal{F}) = 1$ hold.
\item[2)] if $A,B \in \mathcal{F}$ with $A \cap B = \emptyset$, then the identity $\wp(A \cup B) = \wp(A) + \wp(B)$ holds.
\end{itemize}
\end{definition}

\begin{example}[Haldane's probability measure]\label{exa:Haldane_probability_measure}
This example carries on Example \ref{exa:Haldane_event_space} and discusses the definition of a probability measure on the event space defined thereof. First, recall that, in \cite{Haldane}, Haldane associates the space of pairs of recombined segments (i.e  those pairs containing two complementary segments for the recombination operation) with a Bernouilli distribution \cite[Page 12]{Loeve} as illustrated below.
\[
\begin{array}{l}
\xymatrix@C-25pt{
\mathtt{T}\ar@{}[r]_{\uparrow}\ar@<-2ex>@{}[r]_{0}&\mathtt{C}\ar@{}[r]_{\uparrow}\ar@<-2ex>@{}[r]_{1}&\mathtt{T}\ar@{}[r]_{\uparrow}\ar@<-2ex>@{}[r]_{0}&\mathtt{C}\ar@{}[r]_{\uparrow}\ar@<-2ex>@{}[r]_{0}&\mathtt{C}\ar@{}[r]_{\uparrow}\ar@<-2ex>@{}[r]_{0}&\mathtt{T}\ar@{}[r]_{\uparrow}\ar@<-2ex>@{}[r]_{0}&\mathtt{A}&+&\mathtt{A}\ar@{}[r]_{\uparrow}\ar@<-2ex>@{}[r]_{0}&\mathtt{G}\ar@{}[r]_{\uparrow}\ar@<-2ex>@{}[r]_{1}&\mathtt{C}\ar@{}[r]_{\uparrow}\ar@<-2ex>@{}[r]_{0}&\mathtt{G}\ar@{}[r]_{\uparrow}\ar@<-2ex>@{}[r]_{0}&\mathtt{A}\ar@{}[r]_{\uparrow}\ar@<-2ex>@{}[r]_{0}&\mathtt{A}\ar@{}[r]_{\uparrow}\ar@<-2ex>@{}[r]_{0}&\mathtt{C}
}
\end{array}
\quad\mapsto\quad
p^{1} \cdot (1-p)^{5}
\]
In general, the probability $p$ associated with a recombination event (symbolized, in the previous sum, by 1) is equal to a ratio $x/n$ where $n$ is the number of positions at which a recombination can occur on the segment and $x$ is the expected number of recombination events on this segment. In the previous case, the probability $p$ should be equal to $x/6$.

For its part, the event space $\mathcal{F} \hookrightarrow FE_b^{\varepsilon}(\tau)$ of Example \ref{exa:Haldane_event_space} can be associated with a more refined probability measure by associating every DNA segment of a recombination event with one half of the probability that it was given in Haldane's case. A pair of segments that is complementary for a recombination operation will therefore be endowed with the same probability measure as in Haldane's case.
\[
\begin{array}{lll}
\underbrace{\mathtt{T}\mathtt{C}\mathtt{T}\mathtt{C}\mathtt{C}\mathtt{T}\mathtt{A}}_{\textrm{1 cross-over}}+\underbrace{\mathtt{A}\mathtt{G}\mathtt{C}\mathtt{G}\mathtt{A}\mathtt{A}\mathtt{C}}_{\textrm{1 cross-over}}+\underbrace{\mathtt{T}\mathtt{C}\mathtt{C}\mathtt{G}\mathtt{A}\mathtt{A}\mathtt{C}}_{\textrm{0 cross-over}}&\mapsto&p(1-p)^5+\frac{1}{2}(1-p)^6\\
\underbrace{\mathtt{T}\mathtt{C}\mathtt{T}\mathtt{C}\mathtt{C}\mathtt{T}\mathtt{A}}_{\textrm{1 cross-over}}+\underbrace{\mathtt{A}\mathtt{G}\mathtt{C}\mathtt{G}\mathtt{A}\mathtt{A}\mathtt{C}}_{\textrm{1 cross-over}}+\underbrace{\mathtt{A}\mathtt{G}\mathtt{C}\mathtt{G}\mathtt{A}\mathtt{A}\mathtt{C}}_{\textrm{0 cross-over}}&\mapsto&p(1-p)^5+\frac{1}{2}(1-p)^6\\
\underbrace{\mathtt{T}\mathtt{C}\mathtt{T}\mathtt{C}\mathtt{C}\mathtt{T}\mathtt{A}}_{\textrm{1 cross-over}}+\underbrace{\mathtt{A}\mathtt{G}\mathtt{C}\mathtt{G}\mathtt{A}\mathtt{A}\mathtt{C}}_{\textrm{1 cross-over}}+\underbrace{\mathtt{A}\mathtt{G}\mathtt{T}\mathtt{C}\mathtt{C}\mathtt{T}\mathtt{A}}_{\textrm{0 cross-over}}+\underbrace{\mathtt{T}\mathtt{C}\mathtt{C}\mathtt{G}\mathtt{A}\mathtt{A}\mathtt{C}}_{\textrm{0 cross-over}}&\mapsto&p(1-p)^5+(1-p)^6
\end{array}
\]
In this case, we can verify that if two events are disjoint (i.e. $A \cap B = \emptyset$), then the measure of the union of these is equal to the sum of the measures of each of them (i.e. $\wp(A \cup B) = \wp(A) + \wp(B)$). Of course, other definitions of probability measures are possible.

In \cite{Haldane}, Haldane eventually considers long DNA sequences that are subdivided in small intervals so that the probability measure on a sum of complementary segments that are subject to exactly $t$ recombination operations is taken to be equal to the following limit.
\[
\frac{n!}{t!(n-t)!}\Big(\frac{x}{n}\Big)^t\Big(1-\frac{x}{n}\Big)^{n-t} \mathop{\longrightarrow}\limits_{n \to \infty}  x^t\frac{e^{-x}}{t!}
\]
It follows that an observable cross-over event, which is given by the union of all the odd recombination events acting on the sum $\mathtt{A}\mathtt{G}\mathtt{T}\mathtt{C}\mathtt{C}\mathtt{T}\mathtt{A}+\mathtt{T}\mathtt{C}\mathtt{C}\mathtt{G}\mathtt{A}\mathtt{A}\mathtt{C}$, is equal to $(1-e^{-2x})/2$.
\end{example}

\subsection{Toward more formalism}\label{Toward_more_formalism}
We can already see in Examples \ref{exa:Haldane_event_space} and \ref{exa:Haldane_probability_measure} that the computation of mapping functions lack of a certain formalism. This lack of formalism is also mentioned in the literature \cite{McPeekSpeed,Speed_GMF}.


1) A first instance in which more formalism seems to be needed lies in the fact that homologous recombination between more than two DNA strands is not well-understood from the point of view of mapping functions \cite[Page 1033]{McPeekSpeed}. Indeed, while such a recombination event could be noticed from a pair of DNA strands, the actual recombination would only be determined by the knowledge of all the strands resulting from it. This is one of the reasons why the event space of Example \ref{exa:Haldane_event_space} was not required to be stable under intersection. This also suggests that while homologous recombination between pairs of segments can be modelled via regular probability models, multi-locus recombination between more than two segments might need to be studied from a structural point of view. Pedigrads and chromologies could therefore play a certain role toward this prospect.

2) A second instance that could be mentioned is that 
Haldane's Poisson model also fails at modelling cross-over interference \cite{Speed_GMF}. Cross-over interference occurs when a recombination event influences the number of recombination happening around it. Cross-over interference is measured via the so-called \emph{coincidence coefficient}, which is the ratio of the recombination rate of two intervals $A$ and $B$ (on the DNA segment) over the simultaneous recombination rate on $A$ and $B$ (see \cite{Speed_GMF}).
When this ratio is less than 1, the recombination events are more likely to be clustered on the segment while when it is greater than 1, these events are more likely to be evenly spaced \cite{BromanWeber}.

The desire to incorporate cross-over interference in recombination models \cite{Speed_GMF,McPeekSpeed} could motivate the consideration of topologies as one of the varying parameters. This would be something that the language of chronologies could fit quite well. For instance, instead of considering the cone $\rho$ of Example \ref{exa:Haldane_event_space}, we could now consider recombination cones induced by the following cones of segments.
\[
\xymatrix@C-30pt@R-15pt{
&&&&&(\bullet&\bullet)&(\bullet&\bullet)\ar[lld]\ar[rrd]&(\bullet&\bullet&\bullet)&&&&&\\
(\bullet&\bullet)&(\circ&\circ)&(\bullet&\bullet&\bullet)&&&&(\circ&\circ)&(\bullet&\bullet)&(\circ&\circ&\circ)
}
\quad\quad\quad
\xymatrix@C-30pt@R-15pt{
&&&&&(\bullet&\bullet&\bullet)&(\bullet\ar[lld]\ar[rrd]&\bullet&\bullet&\bullet)&&&&&\\
(\circ&\circ&\circ)&(\bullet&\bullet&\bullet&\bullet)&&&&(\bullet&\bullet&\bullet)&(\circ&\circ&\circ&\circ)
}
\]

The cone given on the left would model the fact that a recombination event happening on an interval can influence another one that is located farther on the chromosome. This would be due to the fact that a chromosome can pack its DNA, so that the physical distance between two DNA patches is not necessarily related to their distance on the chromosome. On the other hand, the cone given above, on the right could, be used to model a spaced recombination rate. Of course, in order to take into account any possible recombination events of this type on a chromosome, one would need to consider what one could like to call the set of  `permutations' of these cones (see below).
\begin{center}
\scalebox{0.8}{
$$
\xymatrix@C-30pt@R-15pt{
&&&&&(\bullet&\bullet&\bullet)&(\bullet\ar[lld]\ar[rrd]&\bullet&\bullet&\bullet)&&&&&\\
(\bullet&\bullet&\bullet)&(\circ&\circ&\circ&\circ)&&&&(\circ&\circ&\circ)&(\bullet&\bullet&\bullet&\bullet)
}
\quad
\xymatrix@C-30pt@R-15pt{
&&&&&(\bullet&\bullet)&(\bullet&\bullet\ar[lld]\ar[rrd]&\bullet&\bullet)&(\bullet)&&&&&\\
(\bullet&\bullet)&(\circ&\circ&\circ&\circ)&(\circ)&&\dots&&(\circ&\circ)&(\circ&\circ&\circ&\circ)&(\bullet)
}
\quad
\xymatrix@C-30pt@R-15pt{
&&&&&(\bullet)&(\bullet&\bullet&\bullet\ar[lld]\ar[rrd]&\bullet)&(\bullet&\bullet)&&&&&\\
(\bullet)&(\circ&\circ&\circ&\circ)&(\circ&\circ)&&\dots&&(\circ)&(\circ&\circ&\circ&\circ)&(\bullet&\bullet)
}
$$
}
\end{center}
\begin{center}
\scalebox{0.8}{
$$
\xymatrix@C-30pt@R-15pt{
&&&&&(\bullet&\bullet&\bullet&\bullet)\ar[lld]\ar[rrd]&(\bullet&\bullet&\bullet)&&&&&\\
(\bullet&\bullet&\bullet)&(\circ&\circ&\circ&\circ)&&&&(\circ&\circ&\circ)&(\bullet&\bullet&\bullet&\bullet)
}
$$
}
\end{center}
These sets of cones could then be organized in the form of a chromology and the study of the related recombination events would be done with respect to the peak of each of these cones. Making the cones vary would amount to studying a category whose objects are recombination schemes.

\section{Conclusion}

Pedigrads and chromologies provide a language in which it is possible to talk about many biological phenomena, such as homologous recombination, parsing, mutations, duplications, inversions and CRISPR. Pedigrads in the category of $B_2$-semimodules were shown to provide a framework in which it is possible to talk about recombination events and genetic linkage.



\bibliographystyle{plain}

\end{document}